\theoremstyle{definition}
\newtheorem{dfn}{Definition}[section]
\newtheorem{lem}[dfn]{Lemma}
\newtheorem{cor}[dfn]{Corollary}
\newtheorem{prp}[dfn]{Proposition}
\newtheorem{theom}[dfn]{Theorem}
\newtheorem{rem}[dfn]{Remark}
\newtheorem{ex}[dfn]{Example}
\newtheorem{conj}[dfn]{Conjeture}
\newcommand{\mf}{\mathfrak}
\DeclareMathOperator{\ch}{\sf ch}
\DeclareMathOperator{\sch}{\sf sch}
\DeclareMathOperator{\sdim}{\sf sdim}
\DeclareMathOperator{\id}{\sf id}
\DeclareMathOperator{\Ind}{Ind}
\DeclareMathOperator{\spn}{\sf span}
\title[Modular invariant representations of
the $\mathcal{N}=2$ SCA]
{Modular invariant representations of
the $\mathcal{N}=2$ superconformal algebra}
\author{Ryo SATO}
\address{Graduate School of Mathematical Sciences,
The University of Tokyo 
3-8-1 Komaba Meguro-ku Tokyo,
153-8914, Japan}
\email{rsato@ms.u-tokyo.ac.jp}
\begin{document}

\maketitle

\begin{abstract}
We compute the modular transformation formula
of the characters for a certain family 
of (finitely or uncountably many) simple modules over
 the simple $\mathcal{N}=2$ vertex operator superalgebra of central charge
$c_{p,p'}=3\left(1-\frac{2p'}{p}\right),$ where
$(p,p')$ is a pair of coprime positive integers such that $p\geq2$.
When $p'=1$, the formula coincides with that of the $\mathcal{N}=2$ unitary minimal series
found by F.~Ravanini and S.-K.~Yang.
In addition, we study the properties of 
the corresponding ``modular $S$-matrix'', which is no longer a matrix
if $p'\geq2$.
\end{abstract}


\section{Introduction}

One of the most remarkable features in representation theory of 
vertex operator superalgebras (VOSAs) is the modular invariance property 
of the characters of their modules. 
The property is firstly established by Y.~Zhu in \cite{zhu1996modular}
for rational, $C_{2}$-cofinite vertex operator algebras (VOAs)
under some natural conditions.
See \cite{miyamoto2004modular} and \cite{dong2005modularity}
for the generalization to irrational cases and to super cases, respectively.
We note that all these previous works are based on the $C_{2}$-cofiniteness assumption
which is introduced in \cite{zhu1996modular} and is
deeply related to the finite dimensionality of 
the space of $1$-point functions on torus (see \cite[Definition 5.3]{dong2005modularity}).
See \cite{zhu1996modular}, \cite{miyamoto2004modular}, and \cite{dong2005modularity} for more details.

In the present paper, we construct a ``modular invariant'' family
of simple highest weight modules over
the simple VOSA $L_{c_{p,p'}}$
associated with the $\mathcal{N}=2$ superconformal algebra
of central charge
$$c_{p,p'}:=3\left(1-\frac{2p'}{p}\right).$$
Here $(p,p')$ is a pair of coprime positive integers such that $p\geq2$.
We should note that the simple VOSA $L_{c_{p,p'}}$
is $C_{2}$-cofinite if and only if $p'=1$ (see Corollary \ref{nonC2}). 
When it is not $C_{2}$-cofinite,
the dimension of the space of $1$-point functions on torus is not known to be finite
(cf.~\cite[Theorem 8.1]{dong2005modularity}).
In fact, the space spanned by the character functions of
simple $\frac{1}{2}\mathbb{Z}_{\geq0}$-gradable $L_{c_{p,p'}}$-modules
is not finite dimensional (see Remark \ref{infinite}).
Therefore we explain the precise meaning of the ``modular invariance''
in our case below.

For each pair $(p,p')$ as above, our family of simple $L_{c_{p,p'}}$-modules 
is divided into two classes, \emph{atypical} modules and \emph{typical} modules.
All the atypical modules in this paper
are obtained in either way:
\begin{enumerate}
\item from principal admissible $\widehat{\mf{sl}}_{2|1}$-modules
of level $k=-1+\frac{\,p'}{p}$ (see \cite{bowcock1998characters}, \cite{kac2016representations}, and
Appendix \ref{nonunitaryKW})
by the quantum Becchi-Rouet-Stora-Tyutin (BRST) reduction 
(see \cite{kac2003quantum} and \cite{arakawa2005representation}).
\item from the Kac-Wakimoto admissible highest weight $\widehat{\mf{sl}}_{2}$-modules of level
$k'=-2+\frac{p}{p'}$ (see \cite{KW88}) by the Kazama--Suzuki coset construction
(see \cite{KS89}, \cite{hosono1991lie}, \cite{FSST99}, and \cite[\S 7]{sato2016equivalences}).
\end{enumerate}
Similarly to the atypical modules, the typical ones are also 
obtained in either way:
\begin{enumerate}
\item from typical highest weight $\widehat{\mf{sl}}_{2|1}$-modules 
(in the sense of \cite{gorelik2015characters})
of level $k=-1+\frac{\,p'}{p}$ by the quantum BRST reduction
(see Appendix \ref{reduction} for the details).
\item from simple relaxed highest weight $\widehat{\mf{sl}}_{2}$-modules 
of level $k'=-2+\frac{p}{p'}$ 
(see \cite{creutzig2013modular})
by the Kazama--Suzuki coset construction,
\end{enumerate}
In response to the above two classes of modules, we introduce the following two families: 
\begin{enumerate}
\item[(A)]
characters of atypical modules indexed by 
a certain finite set $\mathscr{S}_{p,p'}$ 
(see Definition \ref{discrete}),
\item[(T)]
characters of typical modules
indexed by $K_{p,p'}\times\mathbb{R}$,
where $K_{p,p'}$ is a finite set which parameterizes
the Belavin--Polyakov--Zamolodchikov (BPZ) minimal series
of central charge $1-\frac{6(p-p')^{2}}{pp'}$.
\end{enumerate}
We note that the former characters can be written in terms of 
the \emph{Appell--Lerch sum} (see \cite{zweg2002mock} and \cite{semikhatov2005higher}).
See \cite{kac2016representations} and Lemma \ref{itoAL}
for the details.

Now we explain the modular transformation properties of the characters.
To describle the whole picture of the modular invariance
in super cases, we need to consider 
the following four types of formal characters:
\begin{enumerate}
\item $\ch^{0,0}(M)\colon$ Neveu--Schwarz character,
\item $\ch^{0,1}(M)\colon$ Neveu--Schwarz supercharacter,
\item $\ch^{1,0}(M)\colon$ Ramond character,
\item $\ch^{1,1}(M)\colon$ Ramond supercharacter.
\end{enumerate}
For $\lambda\in\mathscr{S}_{p,p'}$ and $(\mu,x)\in K_{p,p'}\times\mathbb{R}$,
we denote the character functions of the corresponding highest weight modules by
$\mathbf{A}_{\lambda}^{\varepsilon,\varepsilon'}(\tau,u,t)
=\ch^{\varepsilon,\varepsilon'}(\mathcal{L}_{\lambda})$
and $\mathbf{T}_{\mu,x}^{\varepsilon,\varepsilon'}(\tau,u,t)
=\ch^{\varepsilon,\varepsilon'}(\mathcal{L}_{\mu,x}).$
Here $(\tau,u,t)\in\mathbb{H}\times\mathbb{C}\times\mathbb{C}$ stands for
a certain coordinate of the Cartan subalgebra 
of the $\mathcal{N}=2$ superconformal algebra.
Then our ``modular invariance'' in this paper means
the establishment of the following modular $S$-transformation
\begin{align*}
\mathbf{A}^{\varepsilon,\varepsilon'}_{\lambda}
\left(-\frac{1}{\tau},\frac{u}{\tau},t-\frac{u^{2}}{6\tau}\right)
=&\sum_{\lambda'\in\mathscr{S}_{p,p'}}
S_{\lambda,\lambda'}^{aa,(\varepsilon,\varepsilon')}
\mathbf{A}^{\varepsilon',\varepsilon}_{\lambda'}(\tau,u,t)\\
&\ \ \ +\sum_{\mu''\in K_{p,p'}}
\int_{\mathbb{R}}
\mathrm{d}x''\,S_{\lambda,(\mu'',x'')}^{at,(\varepsilon,\varepsilon')}
\mathbf{T}^{\varepsilon',\varepsilon}_{\mu'',x''}(\tau,u,t),\\
\mathbf{T}_{\mu,x}^{\varepsilon,\varepsilon'}\left(-\frac{1}{\tau},\frac{u}{\tau},
t-\frac{u^{2}}{6\tau}\right)
=&\sum_{\mu'\in K_{p,p'}}
\int_{\mathbb{R}}\mathrm{d}x'\,S_{(\mu,x),(\mu',x')}^{tt,(\varepsilon,\varepsilon')}
\mathbf{T}_{\mu',x'}^{\varepsilon',\varepsilon}(\tau,u,t)
\end{align*}
and the (rather trivial) modular $T$-transformation.
See \S \ref{3.2} and \S \ref{4.4} for the details.

At last we give some remarks on 
relationships between our result and the relevant previous works.
\begin{itemize}
\item
When $p'=1$, 
the set $K_{p,1}$ is empty and the index set $\mathscr{S}_{p,1}$ bijectively corresponds to
the $\mathcal{N}=2$ unitary minimal series of central charge
$$c_{p,1}=3\left(1-\frac{2}{p}\right).$$
Therefore the finite-dimensional space
$$\spn_{\mathbb{C}}\bigl\{\,\mathbf{A}^{\varepsilon,\varepsilon'}_{\lambda}
\,\big|\,\varepsilon,\varepsilon'\in\{0,1\},\,\lambda\in\mathscr{S}_{p,1}\,\bigr\}$$
is ${\sf SL}(2,\mathbb{Z})$-invariant.
Then our result recovers the modular transformation of
the $\mathcal{N}=2$ minimal unitary characters which is obtained by
F.~Ravanini and S.-K.~Yang in \cite{ravanini1987modular} (see also \cite{qiu1987modular})
and proved by V.G.~Kac and M.~Wakimoto in \cite{kac1994integrable}.
See Appendix \ref{KWmodular} for the details.
\item In \cite{semikhatov2005higher}, A.~M.~Semikhatov, A.~Taormina, and I.~Yu.~Tipunin 
studied the modular property of
the characters of simple highest weight modules
over the $\widehat{\mf{sl}}_{2|1}$ of level $k=-1+\frac{\,p'}{p}$ for $p'\geq2$.
The $\mathcal{N}=2$ VOSA is obtained from the affine VOSA associated with 
$\widehat{\mf{sl}}_{2|1}$ by the quantum BRST reduction 
(based on the result of T.~Arakawa in \cite{arakawa2005representation})
and the corresponding central charge is given by
$$c_{p,p'}=-3(2k+1).$$
We can verify that the ``admissible $\widehat{\mf{sl}}_{2|1}$ representations'' 
considered in \cite[\S B.3]{semikhatov2005higher} correspond to
principal admissible $\widehat{\mf{sl}}_{2|1}$-modules with spectral flow twists
(cf.~\cite[\S 2]{kac2016representations}).
We note that the character formula for principal admissible 
$\widehat{\mf{sl}}_{2|1}$-modules is proved in
\cite{gorelik2015characters} (see \cite[Lemma 2.1]{kac2016representations}
for the details).

We should mention that 
the reduced version of the formula in \cite[Theorem 4.1]{semikhatov2005higher}
is presented in \cite[(4.2.75)]{ghom2003high}
and our result gives an alternative expression of \cite[(4.2.75)]{ghom2003high} purely in terms 
of the character functions.
\item
By the Kazama--Suzuki coset construction (see \cite{KS89}, \cite{hosono1991lie},
\cite{FST98}, and \cite{sato2016equivalences}),
the modular invariant family of $L_{c_{p,p'}}$-modules in this paper 
can be regarded as the counterpart of 
that of $\widehat{\mf{sl}}_{2}$-modules at the Kac-Wakimoto admissible levels
$k'=-2+\frac{p}{p'}$ studied in \cite{creutzig2013modular}.
It is worth noting that the formal characters 
of ``typical'' $\widehat{\mf{sl}}_{2}$-modules
which are parameterized by $K_{p,p'}\times\mathbb{R}$
are not convergent to functions defined in the upper half plane $\mathbb{H}$.
\end{itemize}

\medskip
{\bf Acknowledgments: }
The author would like to express his gratitude to 
Thomas Creutzig for helpful discussions and valuable comments.
He also would like to express his appreciation to Minoru Wakimoto
for valuable comments on Appendix \ref{nonunitaryKW}, and
to Yoshiyuki Koga for helpful discussions on Appendix \ref{taffinesuper}.
He wishes to thank
Kazuya Kawasetsu, Hisayoshi Matsumoto, Hironori Oya, and Yoshihisa Saito
for fruitful discussions and comments,
We also thank Victor Kac, Antun Milas, and Vladimir Dobrev for letting him
know the references.
Some part of this work is done while he was visiting
Academia Sinica, Taiwan, in February--March 2017.
He is grateful to the institute for its hospitality.
This work is supported by the Program for Leading Graduate
Schools, MEXT, Japan.

\section{Preliminaries}

\subsection{Notation}

For $(\tau,u)\in\mathbb{H}\times\mathbb{C}$, we set $(q,z):=(e^{2\pi i\tau},e^{2\pi iu})$.
We denote the eta function by
$$\eta(\tau):=
q^{\frac{1}{24}}\prod_{n>0}(1-q^{n})$$
 and the theta functions by
$$\vartheta_{\varepsilon,\varepsilon'}(u;\tau):=
z^{\frac{\varepsilon}{2}}q^{\frac{\varepsilon}{8}}
\prod_{n>0}(1-q^{n})
\left(1+(-1)^{\varepsilon'}zq^{n-\frac{1-\varepsilon}{2}}\right)
\left(1+(-1)^{\varepsilon'}z^{-1}q^{n-\frac{1+\varepsilon}{2}}\right)$$
for $\varepsilon,\varepsilon'\in\{0,1\}$.
We note that
$$\vartheta_{\varepsilon,\varepsilon'}(u;\tau)
=i^{-\varepsilon\varepsilon'}
\sum_{n\in\mathbb{Z}}e^{\pi i\tau(n+\frac{\varepsilon}{2})^{2}
+2\pi i(u+\frac{\varepsilon'}{2})(n+\frac{\varepsilon}{2})}
=z^{\frac{\varepsilon}{2}}q^{\frac{\varepsilon}{8}}
\vartheta_{0,0}\left(u+\frac{\varepsilon}{2}\tau+\frac{\varepsilon'}{2};\tau\right).$$
By abuse of notation, 
we regard $\eta(\tau)$ (resp. $\vartheta_{\varepsilon,\varepsilon'}(u;\tau)$)
as a holomorphic function on $\mathbb{H}$ 
(resp. $\mathbb{C}\times\mathbb{H}$) and also as
a convergent series in $q$ (resp. $z$ and $q$).

\subsection{The $\mathcal{N}=2$ superconformal algebra}

The \emph{Neveu--Schwarz sector of the $\mathcal{N}=2$ 
superconformal algebra} (firstly introduced in \cite{adem76}) is the Lie superalgebra
\begin{equation*}
\mf{ns}_{2}=
\bigoplus_{n\in\mathbb{Z}}\mathbb{C}
L_{n}\oplus
\bigoplus_{n\in\mathbb{Z}}\mathbb{C}
J_{n}\oplus
\bigoplus_{r\in\mathbb{Z}+\frac{1}{2}}\mathbb{C}
G^{+}_{r}\oplus
\bigoplus_{r\in\mathbb{Z}+\frac{1}{2}}\mathbb{C}
G^{-}_{r}\oplus
\mathbb{C}C
\end{equation*}
with the $\mathbb{Z}_{2}$-grading
\begin{equation*}
(\mf{ns}_{2})^{\bar{0}}=
\bigoplus_{n\in\mathbb{Z}}\mathbb{C}
L_{n}\oplus
\bigoplus_{n\in\mathbb{Z}}\mathbb{C}
J_{n}\oplus\mathbb{C}C,\ \ 
(\mf{ns}_{2})^{\bar{1}}=
\bigoplus_{r\in\mathbb{Z}+\frac{1}{2}}\mathbb{C}
G^{+}_{r}\oplus
\bigoplus_{r\in\mathbb{Z}+\frac{1}{2}}\mathbb{C}
G^{-}_{r}
\end{equation*}
and with the following 
(anti-)commutation relations:
\begin{flalign*}
&
[L_{n},L_{m}]=(n-m)L_{n+m}+\frac{1}{12}(n^{3}-n)C\delta_{n+m,0},\\ 
&
[L_{n},J_{m}]=-mJ_{n+m}, \ 
[L_{n},G^{\pm}_{r}]=\left(\frac{n}{2}-r\right)G^{\pm}_{n+r}, \\
&
[J_{n},J_{m}]=\frac{n}{3}C\delta_{n+m,0},\ \ 
[J_{n},G^{\pm}_{r}]=\pm G^{\pm}_{n+r}, \\
&
[G^{+}_{r},G^{-}_{s}]=
2L_{r+s}+(r-s)J_{r+s}+
\frac{1}{3}\left(r^{2}-\frac{1}{4}\right)C\delta_{r+s,0}, \\
&
[G^{+}_{r},G^{+}_{s}]=
[\,G^{-}_{r},G^{-}_{s}]=0,\ \ 
[\mf{ns}_{2},C]=\{0\},
\end{flalign*}
for $n,m\in\mathbb{Z}$ and $r,s\in\mathbb{Z}+\frac{1}{2}$.

\subsection{The $\mathcal{N}=2$ vertex operator superalgebra}

Let $\mf{ns}_{2}=
(\mf{ns}_{2})_{+}\oplus
(\mf{ns}_{2})_{0}\oplus
(\mf{ns}_{2})_{-}$
be the triangular decomposition of $\mf{ns}_{2}$,
where 
\begin{equation*}
\begin{array}{lcl}
(\mf{ns}_{2})_{+} & := &
\displaystyle
\bigoplus_{n>0}\mathbb{C}L_{n}\oplus
\bigoplus_{n>0}\mathbb{C}J_{n}\oplus
\bigoplus_{r>0}\mathbb{C}G^{+}_{r}\oplus
\bigoplus_{r>0}\mathbb{C}G^{-}_{r},\\
(\mf{ns}_{2})_{-} & := &
\displaystyle
\bigoplus_{n<0}\mathbb{C}L_{n}\oplus
\bigoplus_{n<0}\mathbb{C}J_{n}\oplus
\bigoplus_{r<0}\mathbb{C}G^{+}_{r}\oplus
\bigoplus_{r<0}\mathbb{C}G^{-}_{r}
,\\
(\mf{ns}_{2})_{0} & := &
\displaystyle
\mathbb{C}L_{0}\oplus
\mathbb{C}J_{0}
\oplus\mathbb{C}C,
\end{array}
\end{equation*}
and set $(\mf{ns}_{2})_{\geq0}:=
(\mf{ns}_{2})_{+}\oplus(\mf{ns}_{2})_{0}$.

For $(h,j,c)\in\mathbb{C}^{3}$, let $\mathbb{C}_{h,j,c}$
be the $(1|0)$-dimensional $(\mf{ns}_{2})_{\geq0}$-module
defined by $(\mf{ns}_{2})_{+}.1:=\{0\},\ 
L_{0}.1:=h,\ J_{0}.1:=j$ and
$C.1:=c$.
Then the induced module $\mathcal{M}_{h,j,c}:=
\Ind_{(\mf{ns}_{2})_{\geq0}}^{\mf{ns}_{2}}
\mathbb{C}_{h,j,c}$
is called the Verma module of $\mf{ns}_{2}$.
Denote by $\mathcal{L}_{h,j,c}$ 
the simple quotient $\mf{ns}_{2}$-module of $\mathcal{M}_{h,j,c}$.

We write 
$V_{c}=V_{c}(\mf{ns}_{2})\cong \mathcal{M}_{0,0,c}
/\bigl(U(\mf{ns}_{2})G^{+}_{-\frac{1}{2}}\ket{0,0,c}
+U(\mf{ns}_{2})G^{-}_{-\frac{1}{2}}\ket{0,0,c}\bigr)$
 for the universal $\mathcal{N}=2$ VOSA,
where $\ket{0,0,c}$ is the highest weight vector of $\mathcal{M}_{0,0,c}$.
When $c\neq0$, we also write
$L_{c}=L_{c}(\mf{ns}_{2})\cong\mathcal{L}_{0,0,c} $ 
for its simple quotient VOSA.
See \cite[\S 2]{sato2016equivalences} for the details.

\subsection{Classification of simple modules}

Let $1\leq r\leq p-1$ and $0\leq s\leq p'-1$. 
We set
$\mathcal{L}_{r,s;\lambda}:=\mathcal{L}_{\Delta_{r,s}-a\lambda^{2},2a\lambda,c_{p,p'}},$
where $a:=\frac{p'}{p}$ and
$\Delta_{r,s}:=\frac{(ar-s)^{2}-a^{2}}{4a}.$
We also set $\lambda_{r,s}:=\frac{r-1}{2}-\frac{s}{2a}$. Note that
$\Delta_{r,s}=a\lambda_{r,s}(\lambda_{r,s}+1)$.

The following classification is obtained by D. Adamovi\'{c} via the Kazama--Suzuki coset construction.

\begin{theom}[{\cite[Theorem 7.1 and 7.2]{Ad99}}]
The complete representatives 
of the isomorphism classes of simple $\frac{1}{2}\mathbb{Z}_{\geq0}$-gradable 
$L_{c_{p,p'}}$-modules are given as follows:
$$\left\{\mathcal{L}_{r,0;\lambda_{r,0}-\theta}\mid
1\leq r\leq p-1,0\leq\theta\leq r-1,\theta\in\mathbb{Z}\right\}
\sqcup
\{\mathcal{L}_{r,s;\lambda}\mid (r,s)\in K_{p,p'},\lambda\in\mathbb{C}\},$$
where 
$K_{p,p'}:=\{(m,n)\in\mathbb{Z}^{2}\mid 1\leq m\leq p-1,1\leq n\leq p'-1,mp'+np\leq pp'\}$.
\end{theom}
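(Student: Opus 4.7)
The plan is to reduce the classification to the analogous one on the affine side via the \emph{Kazama--Suzuki coset construction}. Recall that this construction realizes $L_{c_{p,p'}}$ inside $L_{k'}(\widehat{\mf{sl}}_{2}) \otimes F$ (with $k' = -2 + \tfrac{p}{p'}$ the corresponding Kac--Wakimoto admissible level and $F$ a rank-one free fermion pair) as the coset by a rank-one Heisenberg subalgebra. Inversely, tensoring an $L_{c_{p,p'}}$-module with a rank-one lattice vertex algebra recovers an $L_{k'}(\widehat{\mf{sl}}_{2})\otimes F$-module. The strategy is to transport a known classification on the affine side through this correspondence.

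First, I would invoke the classification of simple $\mathbb{Z}_{\geq 0}$-gradable modules over $L_{k'}(\widehat{\mf{sl}}_{2})$ at the admissible level $k'$: this consists of a finite family of Kac--Wakimoto admissible highest weight modules, together with a continuous family of simple relaxed highest weight modules parametrized by a complex number.

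Second, I would match parameters under the coset. The admissible highest weight $\widehat{\mf{sl}}_{2}$-modules give rise, after selecting Neveu--Schwarz representatives within each spectral flow orbit that are annihilated by $G^{\pm}_{1/2}$, to the atypical list $\{\mathcal{L}_{r,0;\lambda_{r,0}-\theta}\mid 1 \le r \le p-1,\ 0 \le \theta \le r-1\}$. The relaxed simple $\widehat{\mf{sl}}_{2}$-modules decompose under the Heisenberg into a continuous family of highest weight $\mathcal{N}=2$-modules parametrized by $\lambda \in \mathbb{C}$, producing $\{\mathcal{L}_{r,s;\lambda}\mid (r,s)\in K_{p,p'},\ \lambda \in \mathbb{C}\}$; the defining inequality $mp' + np \le pp'$ for $K_{p,p'}$ arises from the admissibility condition on the conformal weight of the relaxed top space.

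The main obstacle will be the typical part. One has to verify that the coset of a simple relaxed $\widehat{\mf{sl}}_{2}$-module is a simple $\tfrac{1}{2}\mathbb{Z}_{\geq 0}$-gradable $L_{c_{p,p'}}$-module, and conversely that every simple $\tfrac{1}{2}\mathbb{Z}_{\geq 0}$-gradable $L_{c_{p,p'}}$-module not in the atypical list arises this way. The Zhu algebra of $L_{c_{p,p'}}$ pins down the highest weight data, but isolating the continuous parameter $\lambda$ and cutting out the precise set $K_{p,p'}$ requires a careful analysis of the singular vectors of the universal VOSA $V_{c_{p,p'}}$ that generate the maximal ideal defining $L_{c_{p,p'}}$; the image of these singular vectors in the Zhu algebra translates into polynomial constraints on the highest weight $(h,j)$ that precisely single out the listed parameters.
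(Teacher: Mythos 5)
The paper offers no independent proof of this theorem: it is quoted directly from Adamovi\'{c} \cite{Ad99}, and the paper itself notes that the classification is ``obtained by D.~Adamovi\'{c} via the Kazama--Suzuki coset construction,'' which transfers the classification of simple weight modules over $L_{k'}(\widehat{\mf{sl}}_{2})$ at the admissible level $k'=-2+\frac{p}{p'}$ to the $\mathcal{N}=2$ side --- precisely the route you outline. Your proposal therefore follows essentially the same approach as the paper's cited source, and the remaining work you flag (simplicity and exhaustiveness of the coset images, plus the Zhu-algebra constraints singling out $K_{p,p'}$) is exactly what is carried out there.
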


\begin{rem}
The index set $K_{p,p'}$ parameterizes the BPZ minimal series 
of central charge $1-\frac{6(p-p')^{2}}{pp'}$ and we have
 $|K_{p,p'}|=\frac{(p-1)(p'-1)}{2}$.
\end{rem}

\begin{cor}\label{nonC2}
Let $c\in\mathbb{C}\setminus\{0\}$. 
Then the simple VOSA $L_{c}$
is $C_{2}$-cofinite if and only if $c=c_{p,1}$ for some $p\in\mathbb{Z}_{\geq3}$.
\end{cor}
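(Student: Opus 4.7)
The plan is to combine Adamović's classification theorem just cited with two classical inputs: (i) any $C_{2}$-cofinite vertex operator superalgebra admits only finitely many isomorphism classes of simple $\frac{1}{2}\mathbb{Z}_{\geq0}$-gradable modules, via finite-dimensionality of its Neveu--Schwarz Zhu algebra; and (ii) the $\mathcal{N}=2$ unitary minimal model $L_{c_{p,1}}$ is known to be rational and $C_{2}$-cofinite for every integer $p\geq 3$, e.g.~via the Kazama--Suzuki coset construction from the rational affine vertex operator algebra associated with $\widehat{\mf{sl}}_{2}$ at positive integer level $k'=p-2$.

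For the ``only if'' direction, I would assume $L_{c}$ is $C_{2}$-cofinite and split into subcases. If $c=c_{p,p'}$ with $p'\geq 2$, observe that $(1,1)\in K_{p,p'}$ because $(p-1)(p'-1)\geq 1$ whenever $p,p'\geq 2$; Adamović's theorem then yields the uncountable family $\{\mathcal{L}_{1,1;\lambda}\mid\lambda\in\mathbb{C}\}$ of pairwise non-isomorphic simple $\frac{1}{2}\mathbb{Z}_{\geq0}$-gradable $L_{c}$-modules, contradicting (i). If instead $c\neq 0$ is not of the form $c_{p,p'}$ for any admissible pair, then the Kac determinant formula for $\mf{ns}_{2}$ forces the universal VOSA $V_{c}$ to be already simple, whence $V_{c}=L_{c}$; in that case every simple highest weight module $\mathcal{L}_{h,j,c}$ is automatically an $L_{c}$-module, and varying the highest weight over a one-parameter family again produces uncountably many simple modules, contradicting (i). The ``if'' direction is immediate from (ii) once one notes that $K_{p,1}=\emptyset$ (the condition $1\leq n\leq p'-1=0$ is vacuous), so Adamović's list collapses to the finite set $\{\mathcal{L}_{r,0;\lambda_{r,0}-\theta}\mid 1\leq r\leq p-1,\,0\leq\theta\leq r-1\}$, which is at least consistent with $C_{2}$-cofiniteness.

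The main obstacle is the verification, in the second subcase of the forward direction, that $V_{c}=L_{c}$ for every $c\neq 0$ outside the set $\{c_{p,p'}\}$. This rests on the explicit form of the Kac determinant for the $\mathcal{N}=2$ superconformal algebra, which identifies the degenerate central charges precisely as the $c_{p,p'}$. Granted this, the remainder is a routine assembly of existing results, and the appeal to (ii) could in principle be replaced by a direct computation of the Zhu algebra of $L_{c_{p,1}}$; the shortest route, however, is to cite Adamović's rationality theorem and the standard implication from rationality plus regularity to $C_{2}$-cofiniteness for the minimal series.
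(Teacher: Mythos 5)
Your proposal is correct and follows essentially the same route as the paper: the ``only if'' direction splits into the non-minimal case (where $V_{c}=L_{c}$ is simple --- the paper cites Gorelik--Kac rather than invoking the determinant formula directly) and the case $p'\geq2$ (where Adamovi\'{c}'s classification gives uncountably many simple $\frac{1}{2}\mathbb{Z}_{\geq0}$-gradable modules, contradicting the finite-dimensionality of the Zhu algebra that $C_{2}$-cofiniteness forces via the super analogue of Arakawa's surjection), and the ``if'' direction is Adamovi\'{c}'s regularity of $L_{c_{p,1}}$ combined with Li's theorem. Note only that your fallback citation for the ``if'' part is the right one: the primary route you suggest (inheriting $C_{2}$-cofiniteness through the Kazama--Suzuki coset) is not automatic, since $C_{2}$-cofiniteness is not known to pass to cosets in general.
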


\begin{proof}
First, the `if' part follows from the regularity of the VOSA $L_{c_{p,1}}$ proved 
in \cite[Theorem 8.1]{Ad01}
and the super-analog of \cite[Theorem 3.8]{li1999some}.

Next, we verify the `only if' part.
We may assume that $c=c_{p,p'}$
for some pair of coprime integers $(p,p')\in\mathbb{Z}_{\geq2}\times\mathbb{Z}_{\geq1}$.
In fact, otherwise, the simple quotient $L_{c}$
is isomorphic to the non $C_{2}$-cofinite VOSA $V_{c}$ by
 \cite[Corollary 9.1.5 (ii)]{gorelik2007simplicity}.
Here we assume that $p'\neq1$. 
By the previous theorem and 
\cite[Theorem 1.3]{kac1994vertex},
it follows that 
the Zhu algebra of $L_{c_{p,p'}}$ 
 is infinite-dimensional.
Then the infinite-dimensionality of (the even part of) the $C_{2}$-algebra of $L_{c_{p,p'}}$ 
follows from a slight generalization of \cite[Proposition 3.3]{arakawa2014zhu}
to the super case (see also \cite[Introduction]{adamovic2011structure}).
This completes the proof of the `only if' part.
\end{proof}

In this paper, we introduce the notion of atypical and typical modules as follows.

\begin{dfn}\label{typical_atypical}
We call a simple $L_{c_{p,p'}}$-module
$\mathcal{L}_{r,s;\lambda}$ \emph{typical} if
$(r,s)\in K_{p,p'}$ and
$\lambda\in \mathbb{C}\setminus S_{r,s},$
where $S_{r,s}:=\{\lambda_{r,s},\lambda_{p-r,p'-s}\}+\mathbb{Z}$.
Otherwise, we call $\mathcal{L}_{r,s;\lambda}$ \emph{atypical}.
\end{dfn}

\subsection{Formal characters}
Let $M$ be a weight $\bigl(\mf{ns}_{2},(\mf{ns}_{2})_{0}\bigr)$-module
of central charge $c$, {\it i.e.}
$(\mf{ns}_{2})_{0}$ acts semi-simply on $M$
and $C$ acts as the scalar $c$.
We set
$(q,z,w):=\bigl(e^{L_{0}^{*}},e^{J_{0}^{*}},e^{C^{*}}\bigr),$
where $(L_{0}^{*},J_{0}^{*},C^{*})$ is the dual basis of $(\mf{ns}_{2})_{0}^{*}$
with respect to the basis $(L_{0},J_{0},C)$.
Then the formal characters of $M$ is defined by
\begin{equation*}
\ch^{0,0}(M)
:=q^{-\frac{c}{24}}\sum_{\lambda\in(\mf{ns}_{2})_{0}^{*}}
({\sf dim}\,M_{\lambda})\,e^{\lambda},
\end{equation*}
\begin{equation*}
\ch^{0,1}(M)
:=q^{-\frac{c}{24}}\sum_{\lambda\in(\mf{ns}_{2})_{0}^{*}}
(\sdim M_{\lambda})\,e^{\lambda},
\end{equation*}
where $M_{\lambda}:=\{v\in M\,|\,X.v=\langle\lambda,X\rangle v\text{ for any }
X\in(\mf{ns}_{2})_{0}\}$.
In what follows, we write
$\ch^{0,\varepsilon}(M)(q,z,w):=\ch^{0,\varepsilon}(M).$

The following lemma is easily verified (see \cite[Lemma 5.2]{sato2016equivalences}).
\begin{lem}\label{spchar}
For the spectral flow twisted module
$M^{\theta}$ (see Appendix \ref{spfl} for the definition) and $\varepsilon\in\{0,1\}$,
we have 
$$\ch^{0,\varepsilon}(M^{\theta})(q,z,w)=q^{\frac{\ c\theta^{2}}{6}}
z^{\frac{c\theta}{3}}\ch^{0,\varepsilon}(M)(q,zq^{\theta},w).$$
\end{lem}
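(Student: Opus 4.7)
The plan is to unpack the definition of the spectral flow twisted module $M^{\theta}$ from Appendix \ref{spfl} and then perform a direct change-of-variables in the (super)trace that defines $\ch^{0,\varepsilon}(M^{\theta})$. Recall that the spectral flow automorphism $\sigma^{\theta}$ of $\mf{ns}_{2}$ is characterized by $\sigma^{\theta}(L_{n})=L_{n}+\theta J_{n}+\frac{c\theta^{2}}{6}\delta_{n,0}$, $\sigma^{\theta}(J_{n})=J_{n}+\frac{c\theta}{3}\delta_{n,0}$, $\sigma^{\theta}(G^{\pm}_{r})=G^{\pm}_{r\pm\theta}$, and $\sigma^{\theta}(C)=C$ (for integer $\theta$, which preserves the Neveu--Schwarz sector as well as the $\mathbb{Z}_{2}$-grading). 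The twisted module $M^{\theta}$ has the same underlying $\mathbb{Z}_{2}$-graded vector space as $M$, with $\mf{ns}_{2}$ acting through $\sigma^{\theta}$.

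The key observation is step one: if a homogeneous vector $v\in M$ lies in the weight space $M_{(h,j,c)}$ (i.e.\ $L_{0}v=hv$, $J_{0}v=jv$, $Cv=cv$), then the same vector, regarded in $M^{\theta}$, lies in the weight space with eigenvalues $\bigl(h+\theta j+\tfrac{c\theta^{2}}{6},\,j+\tfrac{c\theta}{3},\,c\bigr)$. Consequently $\sdim(M^{\theta})_{(h',j',c)}=\sdim M_{(h,j,c)}$ with this shift (the $\mathbb{Z}_{2}$-grading is preserved because only bosonic Cartan elements are affected).

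Step two is the calculation itself. By definition,
\begin{align*}
\ch^{0,\varepsilon}(M^{\theta})(q,z,w)
&=q^{-\frac{c}{24}}w^{c}\sum_{(h,j)}\sdim M_{(h,j,c)}\,q^{h+\theta j+\frac{c\theta^{2}}{6}}z^{j+\frac{c\theta}{3}}\\
&=q^{\frac{c\theta^{2}}{6}}z^{\frac{c\theta}{3}}\cdot q^{-\frac{c}{24}}w^{c}\sum_{(h,j)}\sdim M_{(h,j,c)}\,q^{h}(zq^{\theta})^{j},
\end{align*}
and the second factor is exactly $\ch^{0,\varepsilon}(M)(q,zq^{\theta},w)$, which gives the claim. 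The case $\varepsilon=0$ is identical with $\dim$ in place of $\sdim$.

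There is no real obstacle here: the statement is a bookkeeping exercise once the explicit form of $\sigma^{\theta}$ on the Cartan is recorded. The only thing to check with care is the sign convention in the definition of $M^{\theta}$ (whether one twists by $\sigma^{\theta}$ or $\sigma^{-\theta}$) so that the shifts in $L_{0}$ and $J_{0}$ match the signs of the factors $q^{c\theta^{2}/6}$ and $z^{c\theta/3}$; with the convention of Appendix \ref{spfl} the signs come out as stated.
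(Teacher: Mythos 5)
Your proof is correct and is exactly the standard verification: the paper itself omits the argument (deferring to \cite[Lemma 5.2]{sato2016equivalences} with the remark that it is ``easily verified''), and the intended proof is precisely your bookkeeping computation of how ${\sf U}_{\theta}$ shifts the $(L_{0},J_{0})$-eigenvalues on the pullback module, followed by factoring $q^{\frac{c\theta^{2}}{6}}z^{\frac{c\theta}{3}}$ out of the weight sum. Your sign conventions agree with the definition of ${\sf U}_{\theta}$ and $M^{\theta}={\sf U}_{\theta}^{*}(M)$ in Appendix \ref{spfl}, and the observation that the $\mathbb{Z}_{2}$-grading is untouched correctly justifies the supercharacter case.
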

Now we introduce the ``half-twisted'' characters as follows.
\begin{dfn}\label{hfspchar}
For $\varepsilon\in\{0,1\}$,
the twisted character of $M$ is deifined by
$$\ch^{1,\varepsilon}(M)(q,z,w):=q^{\frac{c}{24}}
z^{-\frac{c}{6}}\ch^{0,\varepsilon}(M)(q,zq^{-\frac{1}{2}},w).$$
\end{dfn}

\section{Typical modular transformation law}

In this section, we derive the modular transformation formula for
the character functions of typical modules.
Throughout this section, we assume that $p'\geq2$ and $\mathcal{L}_{r,s;\lambda}$ is typical.

\subsection{Character formula for typical modules}

We prove the following character formula 
via the quantum BRST reduction from $\widehat{\mf{sl}}_{2|1}$
of level $k=-1+a$.
See Appendix \ref{qred} for the proof.

\begin{theom}\label{typical}
We have an equality of formal series
$$\ch^{\varepsilon,\varepsilon'}(\mathcal{L}_{r,s;\lambda})(q,z,w)
=(-1)^{\varepsilon\varepsilon'}
q^{-a(\lambda+\frac{\varepsilon}{2})^{2}}z^{2a(\lambda+\frac{\varepsilon}{2})}w^{3(1-2a)}
\frac{\vartheta_{\varepsilon,\varepsilon'}(u;\tau)}{\eta(\tau)^{2}}\chi_{r,s}(\tau),$$
where
$$\chi_{r,s}(\tau):=\frac{1}{\eta(\tau)}
\sum_{n\in\mathbb{Z}}\left(
q^{pp'\bigl(n+\frac{rp'-sp}{2pp'}\bigr)^{2}}
-q^{pp'\bigl(n+\frac{-rp'-sp}{2pp'}\bigr)^{2}}\right).$$
\end{theom}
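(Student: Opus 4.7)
The plan is to construct the typical $\mathcal{N}=2$ module $\mathcal{L}_{r,s;\lambda}$ as the image of a suitable typical highest weight $\widehat{\mf{sl}}_{2|1}$-module $\widetilde{M}_{r,s;\lambda}$ of level $k = -1 + a$ under the quantum BRST (Drinfeld-Sokolov) reduction functor $H^{0}_{DS}(-)$, following Arakawa \cite{arakawa2005representation} and its superalgebra adaptation outlined in Appendix \ref{reduction}. First I would pin down $\widetilde{M}_{r,s;\lambda}$: its highest weight is chosen so that the even $\widehat{\mf{sl}}_{2}$-component encodes the $(r,s)$-combinatorics (matching a level-$k' = -2 + p/p'$ Kac-Wakimoto admissible weight up to ghost contributions) and the remaining Cartan coordinate records $\lambda$, with typicality in the sense of \cite{gorelik2015characters} corresponding precisely to the condition $\lambda \notin S_{r,s}$ transferred to the $\widehat{\mf{sl}}_{2|1}$-side.

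Next, I would invoke the BRST vanishing theorem for typical highest weights to conclude that $H^{i}_{DS}(\widetilde{M}_{r,s;\lambda}) = 0$ for $i \neq 0$. By the Euler-Poincaré principle, the character of $\mathcal{L}_{r,s;\lambda}$ is then obtained by tensoring $\ch(\widetilde{M}_{r,s;\lambda})$ with the characters of the $bc$- and $\beta\gamma$-ghost Fock spaces and applying the Cartan substitution that realizes the quantum reduction. Gorelik's typical character formula \cite{gorelik2015characters} expresses $\ch(\widetilde{M}_{r,s;\lambda})$ as the product of an affine even Weyl-Kac-type numerator encoding the $(r,s)$-data, the even affine denominator, and two odd-root factors that combine into a Jacobi theta function. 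After the substitution the even denominator cancels a matching contribution from the $\beta\gamma$-ghosts (leaving the factor $1/\eta(\tau)^{2}$), the numerator collapses to $\chi_{r,s}(\tau)$, the odd-root product yields $\vartheta_{0,0}(u;\tau)$, and the shift of the highest weight together with the central charge identity $c_{p,p'} = -3(2k+1)$ supplies the prefactor $q^{-a\lambda^{2}} z^{2a\lambda} w^{3(1-2a)}$. This establishes the $(\varepsilon,\varepsilon') = (0,0)$ case.

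The other three cases follow by routine manipulation of the theta characteristics. Inserting $(-1)^{J_{0}}$ for the supercharacter $\varepsilon' = 1$ amounts to the substitution $z \mapsto -z$ in the odd-root product, exchanging $\vartheta_{0,0}$ for $\vartheta_{0,1}$. The Ramond cases $\varepsilon = 1$ come from Definition \ref{hfspchar} via
$$\ch^{1,\varepsilon'}(\mathcal{L}_{r,s;\lambda})(q,z,w) = q^{c/24} z^{-c/6} \ch^{0,\varepsilon'}(\mathcal{L}_{r,s;\lambda})(q,zq^{-1/2},w),$$
after which the prefactors can be rearranged using the translation identity $\vartheta_{\varepsilon,\varepsilon'}(u;\tau) = z^{\varepsilon/2} q^{\varepsilon/8} \vartheta_{0,\varepsilon'}(u + \tfrac{\varepsilon}{2}\tau;\tau)$ recorded in the preliminaries. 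The main obstacle will be the cohomological vanishing step together with the accompanying bookkeeping: verifying that $H^{0}_{DS}(\widetilde{M}_{r,s;\lambda})$ is simple and isomorphic to $\mathcal{L}_{r,s;\lambda}$ (rather than a larger proper quotient of a Verma) depends delicately on the typicality hypothesis, and tracking the precise Cartan substitution through the ghost system so that the prefactor $q^{-a(\lambda+\varepsilon/2)^{2}} z^{2a(\lambda+\varepsilon/2)}$ emerges with the correct $\varepsilon/2$-shift requires careful choice of the highest-weight normalization on the $\widehat{\mf{sl}}_{2|1}$-side.
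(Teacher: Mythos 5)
Your proposal follows essentially the same route as the paper's Appendix B: realize $\mathcal{L}_{r,s;\lambda}$ as $H^{0}$ of the quantum BRST reduction applied to the typical highest weight $\widehat{\mf{sl}}_{2|1}$-module $L(\Lambda_{r,s;\lambda})$ (simplicity of the reduction being guaranteed by Arakawa's theorem under the condition $\Lambda_{r,s;\lambda}(K-h_{1}-h_{2})=ar-s-1\notin\mathbb{Z}_{\geq0}$), compute $\ch\bigl(L(\Lambda)\bigr)$ by Gorelik's typical character formula after checking the isotropic-root nondegeneracy and trivial-stabilizer hypotheses coming from $\lambda\notin S_{r,s}$, and then specialize through the Kac--Roan--Wakimoto Euler--Poincar\'e character formula, handling $(\varepsilon,\varepsilon')\neq(0,0)$ by the standard theta-characteristic shifts. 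This matches the paper's argument, including the identification of the two delicate points (simplicity of the reduction and the bookkeeping of the Cartan specialization).
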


\begin{proof}
By Theorem \ref{reduction}, we obtain
$$\ch^{0,0}(\mathcal{L}_{r,s;\lambda})(q,z,w)
=q^{-a\lambda^{2}}z^{2a\lambda}w^{3(1-2a)}
\frac{\vartheta_{0,0}(u;\tau)}{\eta(\tau)^{2}}\chi_{r,s}(\tau).$$
By some computations, we also 
obtain the characters for $(\varepsilon,\varepsilon')\neq(0,0)$.
\end{proof}

\begin{rem}
The convergent series
$\chi_{r,s}(\tau)$ is the normalized character of the corresponding BPZ minimal series.
The modular transformation is given as follows:
\begin{align*}
&\chi_{r,s}\left(-\frac{1}{\tau}\right)=\sum_{(r',s')\in K_{p,p'}}S_{(r,s),(r',s')}\chi_{r',s'}(\tau),\\
&\chi_{r,s}(\tau+1)=e^{2\pi i\bigl(\frac{(rp'-sp)^{2}}{4pp'}-\frac{1}{24}\bigr)}\chi_{r,s}(\tau),
\end{align*}
where
$$S_{(r,s),(r',s')}:=\sqrt{\frac{8}{pp'}}(-1)^{(r+s)(r'+s')}
\sin\left(\frac{\pi(p-p') rr'}{p}\right)\sin\left(\frac{\pi(p-p')ss'}{p'}\right).$$
See \cite[Proposition 6.3]{IohKogaRV} for the details.
\end{rem}

The next corollary immediately follows from the previous character formula.
\begin{cor}
For any $\theta\in\mathbb{Z}$, 
the spectral flow twisted module $(\mathcal{L}_{r,s;\lambda})^{\theta}$ 
is isomorphic to another typical module $\mathcal{L}_{r,s;\lambda-\theta}$.
In particular, the set of typical modules is closed under the spectral flow.
\end{cor}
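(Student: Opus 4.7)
The plan is to compute the character $\ch^{0,0}\bigl((\mathcal{L}_{r,s;\lambda})^{\theta}\bigr)$ using Lemma~\ref{spchar} and Theorem~\ref{typical}, to check it coincides with $\ch^{0,0}(\mathcal{L}_{r,s;\lambda-\theta})$, and then to upgrade this character equality to a module isomorphism via Adamovi\'c's classification combined with the fact that spectral flow preserves simplicity.

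For the computation, Lemma~\ref{spchar} with $\varepsilon=0$ together with the identity $c=c_{p,p'}=3(1-2a)$ gives
$$\ch^{0,0}\bigl((\mathcal{L}_{r,s;\lambda})^{\theta}\bigr)(q,z,w)=q^{(1-2a)\theta^{2}/2}\,z^{(1-2a)\theta}\,\ch^{0,0}(\mathcal{L}_{r,s;\lambda})(q,zq^{\theta},w).$$
Substituting the formula of Theorem~\ref{typical} and applying the quasi-periodicity $\vartheta_{0,0}(u+\theta\tau;\tau)=q^{-\theta^{2}/2}z^{-\theta}\vartheta_{0,0}(u;\tau)$ (immediate by completing the square in the series definition), the $q$-exponent collects to $-a\lambda^{2}+2a\lambda\theta-a\theta^{2}=-a(\lambda-\theta)^{2}$ and the $z$-exponent to $2a(\lambda-\theta)$, while the factors $w^{3(1-2a)}$, $\vartheta_{0,0}(u;\tau)/\eta(\tau)^{2}$, and $\chi_{r,s}(\tau)$ are unchanged. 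The right-hand side is therefore exactly $\ch^{0,0}(\mathcal{L}_{r,s;\lambda-\theta})$.

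Since spectral flow is an autoequivalence of the category of weight $L_{c_{p,p'}}$-modules, $(\mathcal{L}_{r,s;\lambda})^{\theta}$ is again simple. The set $S_{r,s}=\{\lambda_{r,s},\lambda_{p-r,p'-s}\}+\mathbb{Z}$ is manifestly integer-translation invariant, hence $\lambda-\theta\notin S_{r,s}$ and $\mathcal{L}_{r,s;\lambda-\theta}$ is again typical. Combined with the character identity above, Adamovi\'c's classification (Theorem~2.4.1), under which simple $\tfrac{1}{2}\mathbb{Z}_{\geq 0}$-gradable $L_{c_{p,p'}}$-modules are distinguished by their top $(L_{0},J_{0})$-weights, forces $(\mathcal{L}_{r,s;\lambda})^{\theta}\cong \mathcal{L}_{r,s;\lambda-\theta}$. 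The ``in particular'' clause follows at once.

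The main obstacle is that spectral flow by $\theta\neq 0$ need not preserve $\tfrac{1}{2}\mathbb{Z}_{\geq 0}$-gradability: on principal admissible atypical modules it genuinely fails, which is precisely why the statement requires typicality. What rescues us in the typical case is the cancellation, inside the character computation, between the prefactor $q^{c\theta^{2}/6}z^{c\theta/3}$ supplied by Lemma~\ref{spchar} and the quasi-periodic shift of $\vartheta_{0,0}$; together they keep the $L_{0}$-spectrum of the twisted module bounded below with a well-defined top weight. Once this cancellation has been exhibited, Adamovi\'c's classification removes the need to construct a top vector inside the spectral flowed module by hand.
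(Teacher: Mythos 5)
Your proposal is correct and follows essentially the same route as the paper, which simply observes that the corollary ``immediately follows from the previous character formula'': Lemma~\ref{spchar} plus the quasi-periodicity of $\vartheta_{0,0}$ turns $\ch^{0,0}\bigl((\mathcal{L}_{r,s;\lambda})^{\theta}\bigr)$ into $\ch^{0,0}(\mathcal{L}_{r,s;\lambda-\theta})$, and the isomorphism then follows from simplicity of the twist together with the classification. Your additional remarks (translation-invariance of $S_{r,s}$, preservation of $\tfrac{1}{2}\mathbb{Z}_{\geq 0}$-gradability read off from the character) only make explicit what the paper leaves implicit.
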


Now we introduce the corresponding character function.

\begin{dfn}\label{typicalfunc} Let $(r,s)\in K_{p,p'}$ and $x\in\mathbb{C}$. Then 
a \emph{typical character function} is defined as
the following holomorphic function on $\mathbb{H}\times\mathbb{C}\times\mathbb{C}$:
$$\mathbf{T}^{\varepsilon,\varepsilon'}_{r,s;x}(\tau,u,t)
:=(-1)^{\varepsilon\varepsilon'}
q^{a\left(x-\frac{i\varepsilon}{2}\right)^{2}}z^{2ia\left(x-\frac{i\varepsilon}{2}\right)}w^{3(1-2a)}
\frac{\vartheta_{\varepsilon,\varepsilon'}(u;\tau)}{\eta(\tau)^{2}}
\chi_{r,s}(\tau),$$
where
$(q,z,w)=(e^{2\pi i\tau},e^{2\pi iu},e^{2\pi it}).$
\end{dfn}

The next lemma follows from Definition \ref{typical_atypical}, Theorem \ref{typical},
 and Definition \ref{typicalfunc}.

\begin{lem}
As a function in $(q,z,w)=(e^{2\pi i\tau},e^{2\pi iu},e^{2\pi it})$,
$$\mathbf{T}^{\varepsilon,\varepsilon'}_{r,s;x}(\tau,u,t)
=\ch^{\varepsilon,\varepsilon'}(\mathcal{L}_{r,s;ix})(q,z,w)$$
holds if and only if $x\in\mathbb{R}+i(\mathbb{R}\setminus S_{r,s})$.
\end{lem}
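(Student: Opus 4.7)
The plan is to apply Theorem \ref{typical} after the substitution $\lambda = ix$.

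First, I would verify by direct algebra that $\mathbf{T}^{\varepsilon,\varepsilon'}_{r,s;x}(\tau,u,t)$ in Definition \ref{typicalfunc} coincides with the right-hand side of the character formula in Theorem \ref{typical} after the substitution $\lambda = ix$. This is immediate from the two identities $-a(ix+\varepsilon/2)^{2} = a(x-i\varepsilon/2)^{2}$ and $2a(ix+\varepsilon/2) = 2ai(x-i\varepsilon/2)$, both consequences of $i^{2} = -1$.

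Next, Theorem \ref{typical} yields the character equality $\ch^{\varepsilon,\varepsilon'}(\mathcal{L}_{r,s;ix}) = \mathbf{T}^{\varepsilon,\varepsilon'}_{r,s;x}$ whenever $\mathcal{L}_{r,s;ix}$ is typical, which by Definition \ref{typical_atypical} means $ix \in \mathbb{C}\setminus S_{r,s}$. To translate this condition on $\lambda=ix$ into the stated condition on $x$, I would use two properties of $S_{r,s}$: it is contained in $\mathbb{R}$ (indeed in $\mathbb{Q}$), and it satisfies $-S_{r,s}=S_{r,s}$. The symmetry under negation follows from the one-line identity $\lambda_{p-r,p'-s}+\lambda_{r,s}=-1$, which is immediate from the definitions $\lambda_{r,s}=\frac{r-1}{2}-\frac{sp}{2p'}$ and $a=\frac{p'}{p}$. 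Given these two properties, $ix \in S_{r,s}$ forces $\mathrm{Im}(ix)=\mathrm{Re}(x)=0$, and the remaining condition $-\mathrm{Im}(x)\in S_{r,s}$ reduces via the symmetry to $\mathrm{Im}(x) \in S_{r,s}$.

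The hypothesis $x\in\mathbb{R}+i(\mathbb{R}\setminus S_{r,s})$ amounts to $\mathrm{Im}(x)\notin S_{r,s}$; by the reduction above this forces $ix\notin S_{r,s}$, so $\mathcal{L}_{r,s;ix}$ is typical and the equality follows from Theorem \ref{typical}. The main obstacle for the ``only if'' direction is to establish that the equality fails on the complementary locus. In the genuinely atypical case where $\mathrm{Re}(x)=0$ and $\mathrm{Im}(x)\in S_{r,s}$, the character of $\mathcal{L}_{r,s;ix}$ involves additional Appell--Lerch-type corrections (as discussed in the introduction via Lemma \ref{itoAL}), and thus does not coincide with the product-form $\mathbf{T}^{\varepsilon,\varepsilon'}_{r,s;x}$; comparing the explicit atypical character formula with $\mathbf{T}$ rules out accidental coincidence and completes the argument.
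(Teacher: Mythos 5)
Your substitution step and the ``if'' direction follow exactly the route the paper intends (the paper offers no written proof beyond citing Definition \ref{typical_atypical}, Theorem \ref{typical}, and Definition \ref{typicalfunc}), and your supporting computations --- $-a(ix+\tfrac{\varepsilon}{2})^{2}=a(x-\tfrac{i\varepsilon}{2})^{2}$, the inclusion $S_{r,s}\subset\mathbb{Q}$, and the symmetry $-S_{r,s}=S_{r,s}$ via $\lambda_{p-r,p'-s}+\lambda_{r,s}=-1$ --- are all correct.

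The gap is in the ``only if'' direction, and it is not merely a missing verification: your own intermediate results show the equality holds on a strictly larger set than the one in the statement. The complement of $\mathbb{R}+i(\mathbb{R}\setminus S_{r,s})$ is $\{x\in\mathbb{C}:\mathrm{Im}(x)\in S_{r,s}\}$, of which the locus you actually treat, $\{\mathrm{Re}(x)=0,\ \mathrm{Im}(x)\in S_{r,s}\}=iS_{r,s}$, is only a thin slice. At a point with $\mathrm{Re}(x)\neq0$ and $\mathrm{Im}(x)\in S_{r,s}$ (for instance $x=1+i\lambda_{r,s}$) one has $ix\notin S_{r,s}$ simply because $S_{r,s}\subset\mathbb{R}$ while $ix$ is not real; hence $\mathcal{L}_{r,s;ix}$ is typical and Theorem \ref{typical} gives the equality there too, contradicting the ``only if'' as literally stated. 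The condition your argument genuinely delivers is $x\in\mathbb{C}\setminus iS_{r,s}$ --- which is precisely how the paper itself parameterizes typical modules in \S5 --- so you should have flagged this discrepancy with the statement instead of silently shrinking the complementary locus to the atypical one. Finally, even on $iS_{r,s}$ the key claim that the atypical character differs from $\mathbf{T}^{\varepsilon,\varepsilon'}_{r,s;x}$ is asserted rather than proved; it needs at least a one-line justification, e.g.\ that the simple atypical module is a proper quotient of the module whose character is the product formula, so some weight-space dimension strictly drops, and two convergent series with distinct coefficients define distinct functions of $(q,z,w)$.
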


\begin{rem}\label{infinite}
Since the family
$\{\,\mathbf{T}^{\varepsilon,\varepsilon'}_{r,s;x}\mid 
x\in \mathbb{R}+i(\mathbb{R}\setminus S_{r,s})\,\}$
is linearly independent,
the space spanned by the character functions
of simple $\frac{1}{2}\mathbb{Z}_{\geq0}$-gradable $L_{c_{p,p'}}$-modules
is not finite dimensional.
\end{rem}

\subsection{Modular transformation of typical characters}\label{3.2}

The typical character functions satisfy the following modular transformation formula.

\begin{theom}\label{MTt} For $(r,s)\in K_{p,p'}$ and $x\in\mathbb{C}$,
the following hold:
\begin{equation}\label{SofT}
\mathbf{T}_{r,s;x}^{\varepsilon,\varepsilon'}\left(-\frac{1}{\tau},\frac{u}{\tau},
t-\frac{u^{2}}{6\tau}\right)
=\sum_{(r',s')\in K_{p,p'}}
\int_{\mathbb{R}}\mathrm{d}x'\,S_{(r,s;x),(r',s';x')}^{tt,(\varepsilon,\varepsilon')}
\mathbf{T}_{r',s';x'}^{\varepsilon',\varepsilon}(\tau,u,t),
\end{equation}

\begin{equation}\label{TofT}
\mathbf{T}_{r,s;x}^{\varepsilon,\varepsilon'}(\tau+1,u,t)
=e^{2\pi i\bigl(a\left(x-\frac{i\varepsilon}{2}\right)^{2}
+\frac{(rp'-sp)^{2}}{4pp'}-\frac{1-\varepsilon}{8}\bigr)}
\mathbf{T}_{r,s;x}^{\varepsilon,\varepsilon\varepsilon'+(1-\varepsilon)(1-\varepsilon')}(\tau,u,t),
\end{equation}
where
$$S_{(r,s;x),(r',s';x')}^{tt,(\varepsilon,\varepsilon')}
:=i^{-\varepsilon\varepsilon'}
S_{(r,s),(r',s')}\sqrt{2a}
e^{-4\pi ia\left(x-\frac{i\varepsilon}{2}\right)
\left(x'-\frac{i\varepsilon'}{2}\right)}.$$
\end{theom}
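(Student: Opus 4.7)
The plan is to factor $\mathbf{T}_{r,s;x}^{\varepsilon,\varepsilon'}(\tau,u,t)$ into three pieces whose modular behaviour is separately tractable: a Gaussian piece $(-1)^{\varepsilon\varepsilon'}q^{a(x-i\varepsilon/2)^{2}}z^{2ia(x-i\varepsilon/2)}w^{3(1-2a)}$, the free-field ratio $\vartheta_{\varepsilon,\varepsilon'}(u;\tau)/\eta(\tau)^{2}$, and the BPZ minimal model character $\chi_{r,s}(\tau)$. Under the substitution $(\tau,u,t)\mapsto(-1/\tau,u/\tau,t-u^{2}/(6\tau))$ the third factor transforms via the BPZ $S$-matrix $S_{(r,s),(r',s')}$ recorded in the remark after Theorem~\ref{typical}; the second transforms by the classical Jacobi and Dedekind identities
\[
\vartheta_{\varepsilon,\varepsilon'}(u/\tau;-1/\tau)=i^{-\varepsilon\varepsilon'}\sqrt{-i\tau}\,e^{\pi iu^{2}/\tau}\,\vartheta_{\varepsilon',\varepsilon}(u;\tau),\qquad \eta(-1/\tau)^{2}=-i\tau\,\eta(\tau)^{2},
\]
which simultaneously produce the characteristic swap $\varepsilon\leftrightarrow\varepsilon'$ required on the right-hand side of (\ref{SofT}) and the phase $i^{-\varepsilon\varepsilon'}$ appearing in $S^{tt,(\varepsilon,\varepsilon')}$.

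The heart of the argument is the Gaussian piece. The substitution produces an exponential of
\[
-\tfrac{2\pi ia}{\tau}(x-\tfrac{i\varepsilon}{2})^{2}-\tfrac{4\pi au}{\tau}(x-\tfrac{i\varepsilon}{2})+6\pi i(1-2a)t-\tfrac{\pi i(1-2a)u^{2}}{\tau},
\]
and after combining the $u^{2}/\tau$ contribution with the $e^{\pi iu^{2}/\tau}$ coming from the theta transformation the net coefficient of $u^{2}/\tau$ becomes $2\pi ia$, which completes the square: the Gaussian part reduces to $w^{3(1-2a)}\exp\!\bigl(-\tfrac{2\pi ia}{\tau}(x-\tfrac{i\varepsilon}{2}-iu)^{2}\bigr)$. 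The crucial identity is the Gaussian Fourier integral
\[
\int_{\mathbb{R}}\sqrt{2a}\,e^{-4\pi ia(x-\frac{i\varepsilon}{2})(x'-\frac{i\varepsilon'}{2})}\,q^{a(x'-\frac{i\varepsilon'}{2})^{2}}z^{2ia(x'-\frac{i\varepsilon'}{2})}\,dx'=\frac{1}{\sqrt{-i\tau}}\,\exp\!\left(-\tfrac{2\pi ia}{\tau}(x-\tfrac{i\varepsilon}{2}-iu)^{2}\right),
\]
convergent since $\imagi(\tau a)>0$ (as $a>0$ and $\tau\in\mathbb{H}$). Multiplying both sides of this identity by the transformed free-field ratio and the transformed $\chi_{r,s}$, and collecting constants ($\sqrt{2a}/\sqrt{-2i\tau a}=1/\sqrt{-i\tau}$ cancels the factor $\sqrt{-i\tau}$ from $\eta^{2}$, while $(-1)^{\varepsilon\varepsilon'}$ and $i^{-\varepsilon\varepsilon'}$ reassemble into the definition of $S^{tt,(\varepsilon,\varepsilon')}$) yields precisely (\ref{SofT}).

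For (\ref{TofT}) the substitution $\tau\mapsto\tau+1$ leaves $(u,t)$ untouched, so one simply multiplies the $T$-transformations of the three factors. The Gaussian piece contributes $e^{2\pi ia(x-i\varepsilon/2)^{2}}$; the theta/eta ratio contributes $e^{\pi i\varepsilon/4-\pi i/6}$ together with the characteristic shift $\varepsilon'\mapsto\varepsilon\varepsilon'+(1-\varepsilon)(1-\varepsilon')$ (checked directly from the series definition, case by case in $\varepsilon$); and $\chi_{r,s}(\tau+1)=e^{2\pi i((rp'-sp)^{2}/(4pp')-1/24)}\chi_{r,s}(\tau)$ as recorded after Theorem~\ref{typical}. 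Adding the constant phases yields $\varepsilon/8-1/12-1/24=-(1-\varepsilon)/8$, reproducing the exponent in (\ref{TofT}).

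The main technical obstacle will be the careful bookkeeping of phases under the $S$-transformation: tracking the interaction of the sign $(-1)^{\varepsilon\varepsilon'}$, the $i^{-\varepsilon\varepsilon'}$ produced by the Jacobi identity (which must match the $i^{-\varepsilon\varepsilon'}$ in $S^{tt,(\varepsilon,\varepsilon')}$), the characteristic swap $\varepsilon\leftrightarrow\varepsilon'$ from theta and eta, and the imaginary shift $x\mapsto x-i\varepsilon/2$ inside the Gaussian. A secondary, easier point is a contour-shift argument (Cauchy's theorem on a rapidly decaying entire integrand) needed to apply the Gaussian integration formula along the line $\mathbb{R}-i\varepsilon'/2$ when $\varepsilon'=1$.
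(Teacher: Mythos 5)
Your proposal is correct and follows essentially the same route as the paper: the paper's proof also reduces (\ref{SofT}) to the Gaussian Fourier integral $\frac{e^{2\pi iau^{2}/\tau}}{\sqrt{2a}}\frac{\tilde{q}^{ax^{2}}\tilde{z}^{2iax}}{\sqrt{-i\tau}}=\int_{\mathbb{R}}dx'\,q^{a(x')^{2}}z^{2iax'}e^{-4\pi iaxx'}$ combined with the standard $S$-transformations of $\vartheta_{\varepsilon,\varepsilon'}/\eta^{2}$ and $\chi_{r,s}$, and treats (\ref{TofT}) as immediate. You merely make explicit the phase bookkeeping and the contour shift that the paper leaves implicit.
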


\begin{proof}
The $S$-transformaltion (\ref{SofT}) essentially follows from the Gaussian integral
$$\frac{e^{\frac{2\pi iau^{2}}{\tau}}}{\sqrt{2a}}\frac{\tilde{q}^{ax^{2}}
\tilde{z}^{2iax}}{\sqrt{-i\tau}}
=\int_{\mathbb{R}}dx'\,
q^{a(x')^{2}}
z^{2iax'}e^{-4\pi iaxx'},$$
where 
$(\tilde{q},\tilde{z}):=(e^{-\frac{2\pi i}{\tau}},e^{\frac{2\pi iu}{\tau}})$.
The $T$-transformation (\ref{TofT}) is clear.
\end{proof}

\subsection{Property of typical $S$-data}

The data $S_{(r,s;x),(r',s';x')}^{tt,(\varepsilon,\varepsilon')}$
in the previous subsection has the following properties.

\begin{prp}
Let $\mu, \mu'\in K_{p,p'}$ and $x, x'\in\mathbb{R}$. Then we have
$$S_{(\mu;x),(\mu';x')}^{tt,(\varepsilon,\varepsilon')}
=S_{(\mu';x'),(\mu;x)}^{tt,(\varepsilon',\varepsilon)}$$
and
$$\sum_{\mu''\in K_{p,p'}}
\int_{\mathbb{R}}\mathrm{d}x''\,S_{(\mu;x),(\mu'';x'')}^{tt,(\varepsilon,\varepsilon')}
S_{(\mu'';x''),(\mu';x')}^{tt,(\varepsilon',\varepsilon)}
=(-1)^{\varepsilon\varepsilon'}
\delta_{\mu,\mu'}\delta(-x+i\varepsilon-x'),$$
where $\delta(z)$ is the delta distribution.
\end{prp}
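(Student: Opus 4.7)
My plan is to verify both claims directly from the explicit formula for $S^{tt,(\varepsilon,\varepsilon')}_{(\mu;x),(\mu';x')}$ recorded in Theorem \ref{MTt}. The symmetry assertion is essentially a factor-by-factor inspection: the scalar $i^{-\varepsilon\varepsilon'}$ is manifestly symmetric in $\varepsilon,\varepsilon'$; the BPZ $S$-matrix entry $S_{(r,s),(r',s')}$ is symmetric under $(r,s)\leftrightarrow(r',s')$ by the explicit sine expression recalled in the Remark after Theorem \ref{typical}; and the Gaussian factor $\sqrt{2a}\,e^{-4\pi ia(x-i\varepsilon/2)(x'-i\varepsilon'/2)}$ is symmetric under the simultaneous swap $(x,\varepsilon)\leftrightarrow(x',\varepsilon')$. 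Multiplying these pieces yields the first identity immediately.

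For the integral identity I would expand the integrand using the definition. The two factors of $i$ combine as $i^{-\varepsilon\varepsilon'}\cdot i^{-\varepsilon'\varepsilon}=(-1)^{\varepsilon\varepsilon'}$, which accounts for the prefactor on the right-hand side. The BPZ contribution $S_{\mu,\mu''}S_{\mu'',\mu'}$ is independent of $x''$, so the discrete sum $\sum_{\mu''\in K_{p,p'}}S_{\mu,\mu''}S_{\mu'',\mu'}$ can be evaluated separately and collapses to $\delta_{\mu,\mu'}$ by the unitary self-duality $S^{2}=1$ of the BPZ minimal-model $S$-matrix (a direct check from the sine formula of the Remark after Theorem \ref{typical}). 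The remaining Gaussian factors combine, after collecting the $x''$-dependence, to $2a\cdot e^{-4\pi ia(x''-i\varepsilon'/2)Y}$ with $Y:=(x-i\varepsilon/2)+(x'-i\varepsilon/2)=x+x'-i\varepsilon$, which is linear in $x''$.

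The remaining $x''$-integral is then handled by Fourier inversion: pulling out the constant $e^{2\pi ia\varepsilon'Y}$ and applying $\int_{\mathbb{R}}dx''\,e^{-4\pi iax''Y}=\frac{1}{2a}\delta(Y)$ produces $\frac{1}{2a}e^{2\pi ia\varepsilon'Y}\delta(Y)=\frac{1}{2a}\delta(Y)$, since the exponential is trivial on the support of $\delta(Y)$. Assembling the factors gives $(-1)^{\varepsilon\varepsilon'}\delta_{\mu,\mu'}\cdot 2a\cdot\frac{1}{2a}\delta(x+x'-i\varepsilon)=(-1)^{\varepsilon\varepsilon'}\delta_{\mu,\mu'}\delta(-x+i\varepsilon-x')$, as claimed.

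The only non-routine point is the interpretation of $\delta(-x+i\varepsilon-x')$ when $\varepsilon=1$, where the argument is complex. I would address this by interpreting $\delta$ as the formal distribution obtained by analytic continuation of the real Fourier kernel — equivalently, as the result of shifting the $x''$-contour from $\mathbb{R}$ to $\mathbb{R}-i\varepsilon'/2$, which is legitimate because the integrand is entire in $x''$. This is precisely the convention under which the $S$-transformation (\ref{SofT}) in Theorem \ref{MTt} is stated, so it is compatible with the ambient framework and introduces no additional convergence issue.
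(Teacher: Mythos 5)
Your argument is correct and is essentially the proof the paper gives: the symmetry is a term-by-term inspection, and the second identity factorizes into the BPZ sum $\sum_{\mu''}S_{\mu,\mu''}S_{\mu'',\mu'}=\delta_{\mu,\mu'}$ times the Gaussian Fourier integral $2a\int_{\mathbb{R}}e^{-4\pi iax''Y}\mathrm{d}x''=\delta(Y)$ with $Y=x+x'-i\varepsilon$, exactly as in the paper (which cites the BPZ unitarity rather than rechecking it, and leaves the complex-argument $\delta$ purely formal, whereas you supply the contour-shift interpretation). One trivial slip: the constant you pull out of the integral is $e^{-2\pi a\varepsilon'Y}$ rather than $e^{2\pi ia\varepsilon'Y}$, but since it is evaluated on the support of $\delta(Y)$ this does not affect the conclusion.
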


\begin{proof}
The former is obvious.
The latter follows from 
\begin{align*}
&\int_{\mathbb{R}}\mathrm{d}x''\,
\sqrt{2a}e^{-4\pi ia\left(x-\frac{i\varepsilon}{2}\right)
\left(x''-\frac{i\varepsilon'}{2}\right)}
\times\sqrt{2a}e^{-4\pi ia\left(x''-\frac{i\varepsilon'}{2}\right)
\left(x'-\frac{i\varepsilon'}{2}\right)}
\\
&\ \ =2a\int_{\mathbb{R}}e^{2\pi i(-x+i\varepsilon-x')\times2a\left(x''-\frac{i\varepsilon'}{2}\right)}\mathrm{d}x''\\
&\ \ =\delta(-x+i\varepsilon-x')
\end{align*}
and the fact that
\begin{equation}\label{Svir}
\sum_{\mu''\in K_{p,p'}}S_{\mu,\mu''}S_{\mu'',\mu'}
=\delta_{\mu,\mu'}.
\end{equation}
For example, see \cite[(9.10)]{IohKogaRV} for the proof of the equality (\ref{Svir}).
\end{proof}

\section{Atypical modular transformation law}

In this section, we present the modular transformation formula for
the character functions of atypical modules (see Definition \ref{typical_atypical}
for the definition of atypical modules).
Its proof is given in Appendix \ref{proofofamt}.
Throughout this section, we assume that $p'\geq1$.

\subsection{Parameterization of atypical modules}

First we assign a triple of certain integers to each atypical module
 as follows.
 
\begin{lem}
Let $M$ be a simple highest weight $\mf{ns}_{2}$-module.
Then the following are equivalent:
\begin{enumerate}
\item $M$ is an atypical module.
\item There exist $1\leq r\leq p-1$, $0\leq s\leq p'-1$, and $\theta\in\mathbb{Z}$
such that
$M\cong
\mathcal{L}(r,s)^{\theta},$
where
$\mathcal{L}(r,s)
:=\mathcal{L}_{r,s;\lambda_{r,s}}
=\mathcal{L}_{a\lambda_{r,s},
2a\lambda_{r,s},c_{p,p'}}.$
\end{enumerate}
\end{lem}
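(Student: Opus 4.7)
The plan is to establish the equivalence by inspecting the Adamovi\'{c} classification directly, pivoting on a single algebraic identity that ties $\mathcal{L}(r,s)$ to $\mathcal{L}(p-r,p'-s)$. The key observation is that for every $(r,s)\in K_{p,p'}$ one has
$$\mathcal{L}_{r,s;-\lambda_{r,s}-1}\cong\mathcal{L}(p-r,p'-s),$$
which I would prove by a short direct calculation. Using $\frac{p'}{2a}=\frac{p}{2}$ one obtains $\lambda_{p-r,p'-s}=-\lambda_{r,s}-1$, and since $\Delta_{r,s}=a\lambda_{r,s}(\lambda_{r,s}+1)$ is manifestly invariant under the involution $\lambda\mapsto-\lambda-1$, we also get $\Delta_{p-r,p'-s}=\Delta_{r,s}$; matching the defining triples $(\Delta-a\lambda^{2},2a\lambda,c_{p,p'})$ on the two sides then yields the isomorphism.

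For the implication $(2)\Rightarrow(1)$, I would first check that $\mathcal{L}(r,s)$ itself is atypical for every $(r,s)$ with $1\leq r\leq p-1$ and $0\leq s\leq p'-1$. If $s=0$ this is automatic because $(r,0)\notin K_{p,p'}$; if $(r,s)\in K_{p,p'}$ it is immediate from $\lambda_{r,s}\in S_{r,s}$; and if $s\geq 1$ with $(r,s)\notin K_{p,p'}$, the key identification rewrites $\mathcal{L}(r,s)$ as an atypical module indexed by $(p-r,p'-s)\in K_{p,p'}$. Next, atypicality is preserved under spectral flow: the corollary following Theorem \ref{typical} shows that the typical locus is $\theta$-stable, hence so is its complement.

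For the converse $(1)\Rightarrow(2)$, I would run a case analysis driven by Adamovi\'{c}'s theorem. Every atypical $M$ is isomorphic to one of
\begin{enumerate}
\item $\mathcal{L}_{r,0;\lambda_{r,0}-\theta}$ with $1\leq r\leq p-1$ and $0\leq\theta\leq r-1$, which equals $\mathcal{L}(r,0)^{\theta}$;
\item $\mathcal{L}_{r,s;\lambda_{r,s}+n}$ with $(r,s)\in K_{p,p'}$ and $n\in\mathbb{Z}$, which equals $\mathcal{L}(r,s)^{-n}$;
\item $\mathcal{L}_{r,s;-\lambda_{r,s}-1+n}$ with $(r,s)\in K_{p,p'}$ and $n\in\mathbb{Z}$, which by the key identification equals $\mathcal{L}(p-r,p'-s)^{-n}$.
\end{enumerate}
Since $S_{r,s}=\{\lambda_{r,s},-\lambda_{r,s}-1\}+\mathbb{Z}$, these three cases exhaust the atypical locus of the list, completing $(1)\Rightarrow(2)$.

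The main obstacle is to verify that the spectral-flow rule $\mathcal{L}_{r,s;\lambda}^{\theta}\cong\mathcal{L}_{r,s;\lambda-\theta}$, recorded for the typical range in the corollary after Theorem \ref{typical}, persists at atypical $\lambda$, and to disentangle the $(r,s)$ parameters across the three cases above so that the correspondence $(r,s,\theta)\mapsto\mathcal{L}(r,s)^{\theta}$ is well-defined. Both are short checks once the key identification is in hand: the first follows by tracking how the highest-weight triple $(h,j,c)$ transforms under the spectral-flow automorphism or, equivalently, by comparing leading terms via Lemma \ref{spchar}; the second from the disjointness $\{\lambda_{r,s}+\mathbb{Z}\}\cap\{-\lambda_{r,s}-1+\mathbb{Z}\}=\emptyset$, which holds because $2\lambda_{r,s}+1=r-s/a\notin\mathbb{Z}$ for $1\leq s\leq p'-1$.
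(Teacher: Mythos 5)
Your reduction of the converse direction to the uniform rule $\mathcal{L}_{r,s;\lambda}^{\theta}\cong\mathcal{L}_{r,s;\lambda-\theta}$ is where the argument breaks: that rule fails at atypical $\lambda$, and the ``short check by tracking the highest-weight triple'' that you defer would in fact refute it rather than confirm it. By the lemma in Appendix \ref{spfl}, $(\mathcal{L}_{h,j,c})^{\pm1}$ takes an exceptional branch exactly when $h=\mp j/2$; for $\mathcal{L}_{r,s;\lambda}$ the solutions of $h=j/2$ are $\lambda\in\{\lambda_{r,s},-\lambda_{r,s}-1\}$ and those of $h=-j/2$ are $\lambda\in\{\lambda_{r,s}+1,-\lambda_{r,s}\}$ --- all lying in $S_{r,s}$, so every atypical spectral-flow orbit passes through such points while typical orbits avoid them (which is why the corollary after Theorem \ref{typical} is clean). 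Concretely, $\mathcal{L}(r,s)=\mathcal{L}_{a\lambda_{r,s},2a\lambda_{r,s},c_{p,p'}}$ satisfies $h=j/2$, so
$$\mathcal{L}(r,s)^{-1}\cong\mathcal{L}_{h-j+\frac{c}{6},\,j-\frac{c}{3},\,c}
=\mathcal{L}_{r,s+1;\lambda_{r,s+1}+1},$$
whose $J_{0}$-weight is $2a(\lambda_{r,s}+1)-1$, whereas your claimed $\mathcal{L}_{r,s;\lambda_{r,s}+1}$ has $J_{0}$-weight $2a(\lambda_{r,s}+1)$; these modules are not isomorphic. Hence items (2) and (3) of your case analysis for $(1)\Rightarrow(2)$ are wrong for $n\geq1$: the correct identification is $\mathcal{L}_{r,s;\lambda_{r,s}+n}\cong\mathcal{L}(r,s-1)^{-n}$ (and a correspondingly shifted label in item (3)). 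The lemma survives because the union over all $0\leq s\leq p'-1$ absorbs the shift, but proving that requires exactly the branch-by-branch computation of $\mathcal{L}(r,s)^{\theta}$ --- including the $s\mapsto s+1$ shift for $\theta<0$ and the turning points $\mathcal{L}(r,0)^{r}\cong\mathcal{L}(p-r,p'-1)$ and $\mathcal{L}(r,p'-1)^{-p+r}\cong\mathcal{L}(p-r,0)$ --- recorded in Corollary \ref{classifi}, which is what the paper's one-line proof invokes.

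The rest of your proposal is sound. The key identification $\mathcal{L}_{r,s;-\lambda_{r,s}-1}\cong\mathcal{L}(p-r,p'-s)$ is a genuine equality of highest-weight data (since $\lambda_{p-r,p'-s}=-\lambda_{r,s}-1$ and $\Delta_{p-r,p'-s}=\Delta_{r,s}$), the direction $(2)\Rightarrow(1)$ goes through as you describe, and deducing spectral-flow stability of the atypical locus from the stability of the typical locus together with invertibility of ${\sf U}_{\theta}^{*}$ is correct. The gap is confined to $(1)\Rightarrow(2)$, but it is a real one: the step you dismiss as a formality is the actual content of the proof.
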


\begin{proof}
It immediately follows from Corollary \ref{classifi}.
\end{proof}

\begin{rem}\label{KWadm}
Note that we have
$\Omega^{+}_{\lambda_{r,s}-\theta}\bigl(L(r,s)\bigr)\cong\mathcal{L}(r,s)^{\theta},$
where $L(r,s)$ is the Kac-Wakimoto admissible 
$\widehat{\mf{sl}}_{2}$-module of highest weight 
$(k-2\lambda_{r,s})\Lambda_{0}+
2\lambda_{r,s}\Lambda_{1}$. Here $k'=-2+a^{-1}$
and $\Lambda_{i}$ stands for the $i$-th fundamental weight
of $\widehat{\mf{sl}}_{2}$.
\end{rem}

Next we compute the set of equivalence class
of all the modules of the form $\mathcal{L}(r,s)^{\theta}$.
When $p'=1$, we have 
$\mathcal{L}(r,0)^{r}\cong\mathcal{L}(p-r,0)$
and the periodic isomorphism
$\mathcal{L}(r,0)^{p}\cong\mathcal{L}(r,0)$
for any $1\leq r\leq p-1$.
Therefore, as well known, there are only finitely many inequivalent
simple $\frac{1}{2}\mathbb{Z}_{\geq0}$-gradable $L_{c_{p,1}}$-modules.
On the other hand, when $p'\neq1$, we obtain 
by Corollary \ref{classifi} the following identification:

\begin{lem}\label{classifi2}
Suppose that $p'\geq2$.
Then $\mathcal{L}(r,s)^{\theta}$ is isomorphic to $\mathcal{L}(r',s')$
 if and only if one of the following holds:
\begin{enumerate}
\item
$(r,s;\theta)=(r,s;0)$ and $(r',s')=(r,s),$
\item $(r,s;\theta)=(r,0;r)$ and 
$(r',s')=(p-r,p'-1)$,
\item
$(r,s;\theta)=(r,p'-1;-p+r)$ and $(r',s')=(p-r,0).$
\end{enumerate}
\end{lem}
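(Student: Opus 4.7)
The plan is to combine the characterization of atypical modules as spectral flows $\mathcal{L}(r,s)^{\theta}$ (from the preceding lemma and Corollary \ref{classifi}) with the classification Theorem 2.1, reducing the question to a matching of $(L_{0},J_{0})$-weights. First, I would observe that an isomorphism $\mathcal{L}(r,s)^{\theta}\cong\mathcal{L}(r',s')$ forces the spectrally flowed module to admit a highest weight vector with respect to the standard triangular decomposition of $\mf{ns}_{2}$. For $\theta=0$ this is automatic, yielding case (1); for $\theta\neq0$ the existence of such a vector is a genuine restriction that must be justified.

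Second, I would apply Lemma \ref{spchar} to track how the $(L_{0},J_{0})$-weights shift under $\Omega^{\theta}$: every weight is translated by $\bigl(\theta j+\tfrac{c}{6}\theta^{2},\tfrac{c}{3}\theta\bigr)$ with $c=c_{p,p'}$. Combining this with the explicit highest weight $(h,j)=\bigl(a\lambda_{r,s}(\lambda_{r,s}+1),2a\lambda_{r,s}\bigr)$ of $\mathcal{L}(r,s)$ and the formula $\lambda_{r,s}=\tfrac{r-1}{2}-\tfrac{s}{2a}$, I would solve for those $(r',s',\theta)$ for which some weight of $\mathcal{L}(r,s)^{\theta}$ coincides with $\bigl(a\lambda_{r',s'}(\lambda_{r',s'}+1),2a\lambda_{r',s'}\bigr)$ subject to $1\leq r'\leq p-1$ and $0\leq s'\leq p'-1$. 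The integer solutions (after eliminating $a=p'/p$) land exactly on the triples in cases (2) and (3), namely $\theta=r$ mapping $(r,0)$ to $(p-r,p'-1)$, and $\theta=r-p$ mapping $(r,p'-1)$ to $(p-r,0)$.

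The main obstacle is verifying that at each such shifted weight an actual highest weight vector of $\mathcal{L}(r,s)^{\theta}$ appears, rather than just an abstract coincidence of weights. This is where atypicality is essential: the condition $\lambda_{r,s}\in S_{r,s}$ defining atypicality produces an additional singular vector in the Verma cover, and by the explicit Adamovi\'c structure a surviving singular vector lies in the image of $\mathcal{L}(r,s)$ only when $s\in\{0,p'-1\}$; precisely for those $s$ it becomes the new highest weight vector after spectral flow by the shift $\theta$ determined above. For $s\notin\{0,p'-1\}$ no such vector exists, so no nonzero $\theta$ can realize an isomorphism onto another $\mathcal{L}(r',s')$, which establishes the exhaustiveness of the list. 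The converses in (2) and (3) are then immediate by comparing characters via Lemma \ref{spchar} together with the simplicity of both sides.
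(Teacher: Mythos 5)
Your overall strategy (reduce to a comparison of highest weights via the classification theorem and spectral flow) is the right one, and it is essentially how the paper proceeds: the lemma is read off directly from Corollary \ref{classifi}, which computes the highest weight of $\mathcal{L}(r,s)^{\theta}$ in every case by iterating the unit spectral flow formulas of Appendix \ref{spfl}. But your execution has a genuine gap at the decisive step. Matching ``some weight'' of $\mathcal{L}(r,s)^{\theta}$ against the highest weight of $\mathcal{L}(r',s')$ is far from sufficient, and the highest weight of $\mathcal{L}(r,s)^{\theta}$ is \emph{not} in general the $\bigl(\theta j+\tfrac{c}{6}\theta^{2},\tfrac{c}{3}\theta\bigr)$-translate of the highest weight of $\mathcal{L}(r,s)$: the unit flow $(\mathcal{L}_{h,j,c})^{\pm1}$ branches according to whether $h=\mp j/2$, and it is exactly at the boundary values $\theta=r$ (for $s=0$) and $\theta=-p+r$ (for $s=p'-1$) that the branch changes, producing the extra shift that lands on $\mathcal{L}(p-r,p'-1)$ and $\mathcal{L}(p-r,0)$. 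A naive translation of the original highest weight misses precisely cases (2) and (3).

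More seriously, your argument for exhaustiveness rests on a false premise. You claim that for $s\notin\{0,p'-1\}$, or for the wrong $\theta$, ``no such vector exists,'' i.e.\ $\mathcal{L}(r,s)^{\theta}$ admits no highest weight vector. In fact $\mathcal{L}(r,s)^{\theta}$ is a simple highest weight module for \emph{every} $\theta\in\mathbb{Z}$: by Adamovi\'c's classification every simple $\tfrac{1}{2}\mathbb{Z}_{\geq0}$-gradable $L_{c_{p,p'}}$-module is highest weight, and spectral flow preserves this category, which is exactly what Corollary \ref{classifi} records. What fails for $s\notin\{0,p'-1\}$ and $\theta\neq0$ is not the existence of a highest weight vector but the condition $h=j/2$ (equivalently $\lambda=\lambda_{r',s'}$) that characterizes the modules $\mathcal{L}(r',s')$ among all atypicals; arithmetically, $\lambda_{r,s}-\theta=-1-\lambda_{r,s}$ forces $\theta=r-\tfrac{sp}{p'}\in\mathbb{Z}$, hence $s=0$ and $\theta=r$, and dually in the negative direction. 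Relatedly, the mechanism you invoke --- a singular vector of the Verma cover ``surviving in the image of $\mathcal{L}(r,s)$'' --- cannot work as stated, since a singular vector of the Verma module generates a proper submodule and therefore maps to zero in the simple quotient; the relevant objects are extremal vectors of the simple module itself, and those exist for all $\theta$ and all $s$.
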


As a consequence, 
if $p'\geq2$, there exist infinitely many inequivalent atypical modules.

\subsection{Character formula for atypical modules}

In this subsection, we compute the character formula for atypical modules
and introduce the corresponding meromorphic functions.

Let $(r,s)$ be a pair of integers such that
$1\leq r\leq p-1$ and $0\leq s\leq p'-1$.
We consider the meromorphic function
$$\Psi_{p,p';r,s}(u;\tau):=\sum_{n\in\mathbb{Z}}
\left(\frac{q^{pp'\bigl(n+\frac{rp'-sp}{2pp'}\bigr)^{2}}}{1-zq^{pn}}
-\frac{q^{pp'\bigl(n+\frac{-rp'-sp}{2pp'}\bigr)^{2}}}{1-zq^{pn-r}}\right)$$
on $\mathbb{C}\times\mathbb{H}$, whose divisor is
$$D_{p,r}:=\left\{(\left.m+pn\tau,\tau),\ (m+(pn+r)\tau,\tau)
\,\right|\tau \in\mathbb{H},\ m,n\in\mathbb{Z}\right\}.$$
Then we define the formal series $\Phi_{p,p';r,s}(z,q)$ 
as the expansion of $\Psi_{p,p';r,s}(u;\tau)$ 
with respect to the two variables 
$(z,q)=(e^{2\pi iu},e^{2\pi i\tau})$ in the region
$$\mathbb{A}:=
\bigl\{(u;\tau)\in\mathbb{C}\times\mathbb{H}\,\big|\,|q|<|z|<1\bigr\}
\subset(\mathbb{C}\times\mathbb{H})\setminus D_{p,r}.$$
That is, the formal series 
$\Phi_{p,p';r,s}\left(z,q\right)$ is given by
$$\Phi_{p,p';r,s}\left(z,q\right)
=\sum_{n,m\geq0}z^{m}\varphi_{p,p';r,s}^{n,m}(q)-\sum_{n,m<0}z^{m}\varphi_{p,p';r,s}^{n,m}(q),$$
where
$$\varphi_{p,p';r,s}^{n,m}(q):=
q^{pp'\bigl(n+\frac{rp'-sp}{2pp'}\bigr)^{2}+pnm}
-
q^{pp'\bigl(n+\frac{(p-r)p'+(p'-s)p}{2pp'}\bigr)^{2}+(pn+p-r)m}.$$

\begin{theom}
For any $\theta\in\mathbb{Z}$, we have an equality of formal series
\begin{align*}
\ch^{\varepsilon,\varepsilon'}\bigl(\mathcal{L}(r,s)^{\theta}\bigr)(q,z,w)
=&\,(-1)^{\varepsilon\varepsilon'}q^{-a(\lambda_{r,s}-\theta+\frac{\varepsilon}{2})^{2}}
z^{2a(\lambda_{r,s}-\theta+\frac{\varepsilon}{2})}w^{3(1-2a)}\\
&\times\frac{\vartheta_{\varepsilon,\varepsilon'}(u;\tau)}{\eta(\tau)^{3}}
\Phi_{p,p';r,s}\left((-1)^{1-\varepsilon'}zq^{\theta+\frac{1-\varepsilon}{2}},
q\right).
\end{align*}
\end{theom}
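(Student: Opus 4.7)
The plan is to bootstrap from the single case $\theta=0$, $(\varepsilon,\varepsilon')=(0,0)$: once that is established, everything else follows from spectral flow together with elementary theta-function identities. For the base case, I would use the identification $\mathcal{L}(r,s)\cong\Omega^{+}_{\lambda_{r,s}}\bigl(L(r,s)\bigr)$ from Remark \ref{KWadm} and derive the character of $\mathcal{L}(r,s)$ from the Kac--Wakimoto character of the admissible $\widehat{\mathfrak{sl}}_{2}$-module $L(r,s)$ via the Kazama--Suzuki coset identity; equivalently, following the strategy of Theorem \ref{typical} and Appendix \ref{qred}, one applies the quantum BRST reduction to a suitable principal admissible $\widehat{\mathfrak{sl}}_{2|1}$-module whose character is given by the Gorelik formula. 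In either formulation, after manipulation with the Jacobi triple product and standard fermion/Heisenberg identities, the Appell--Lerch function $\Phi_{p,p';r,s}(-zq^{1/2},q)$ emerges together with the prefactor $q^{-a\lambda_{r,s}^{2}}z^{2a\lambda_{r,s}}w^{3(1-2a)}\vartheta_{0,0}(u;\tau)/\eta(\tau)^{3}$; the geometric-series denominators $(1-zq^{pn})^{-1}$ and $(1-zq^{pn-r})^{-1}$ inside $\Phi_{p,p';r,s}$ record the two infinite towers of singular vectors characteristic of the atypical highest weight $(a\lambda_{r,s},2a\lambda_{r,s})$.

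Next I would extend to arbitrary $\theta\in\mathbb{Z}$ by Lemma \ref{spchar}: with $c=3(1-2a)$, one has $\ch^{0,0}(\mathcal{L}(r,s)^{\theta})(q,z,w)=q^{c\theta^{2}/6}z^{c\theta/3}\,\ch^{0,0}(\mathcal{L}(r,s))(q,zq^{\theta},w)$. The quasi-periodicity $\vartheta_{0,0}(u+\theta\tau;\tau)=z^{-\theta}q^{-\theta^{2}/2}\vartheta_{0,0}(u;\tau)$, combined with a direct consolidation of exponents, converts the right-hand side into $q^{-a(\lambda_{r,s}-\theta)^{2}}z^{2a(\lambda_{r,s}-\theta)}w^{3(1-2a)}\,\vartheta_{0,0}/\eta^{3}\cdot\Phi_{p,p';r,s}(-zq^{\theta+1/2},q)$, matching the theorem. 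The remaining three parity choices are then handled in two short steps: the NS supercharacter $(0,1)$ is obtained by substituting $z\to -z$ in the fermionic factors, which replaces $\vartheta_{0,0}$ by $\vartheta_{0,1}$ and flips the sign inside the argument of $\Phi_{p,p';r,s}$, producing the factor $(-1)^{1-\varepsilon'}$; the Ramond characters ($\varepsilon=1$) then follow from Definition \ref{hfspchar} by applying $z\mapsto zq^{-1/2}$ and multiplying by $q^{c/24}z^{-c/6}$, and a short computation using $\vartheta_{0,\varepsilon'}(u-\tau/2;\tau)=z^{1/2}q^{-1/8}\vartheta_{1,\varepsilon'}(u;\tau)$ produces the shift $\lambda_{r,s}-\theta+\varepsilon/2$ in the $q,z$ exponents and the argument $zq^{\theta+(1-\varepsilon)/2}$ inside $\Phi_{p,p';r,s}$, together with the overall sign $(-1)^{\varepsilon\varepsilon'}$.

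The hard part will be the base case, i.e.\ the appearance of $\Phi_{p,p';r,s}$ in the character of $\mathcal{L}(r,s)$, for two related reasons. First, one must identify the formal character with a specific expansion of the meromorphic function $\Psi_{p,p';r,s}(u;\tau)$ (namely the expansion valid in $\mathbb{A}=\{|q|<|z|<1\}$) and verify that this is the expansion forced by the highest-weight structure of $\mathcal{L}(r,s)^{\theta}$ rather than any other expansion. Second, the cancellations that produce the compact numerator-difference pattern of $\Phi_{p,p';r,s}$ reflect the socle/cosocle structure of the simple atypical module; tracking this either requires a BGG-type resolution of $\mathcal{L}(r,s)$ in terms of Verma modules (built from its infinite chain of singular vectors) or, via the BRST route, Arakawa's exactness/vanishing theorem from \cite{arakawa2005representation} together with the explicit character of the simple principal admissible $\widehat{\mathfrak{sl}}_{2|1}$-module at the relevant weight given by Gorelik in \cite{gorelik2015characters}.
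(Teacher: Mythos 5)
Your proposal takes essentially the same route as the paper: the paper's proof simply quotes the base case $\theta=0$, $(\varepsilon,\varepsilon')=(0,0)$ from \cite[Theorem 7.13]{sato2016equivalences} (the Kazama--Suzuki coset computation you describe) and then appeals to Lemma \ref{spchar} and ``some computations'' for the remaining cases, which is exactly your spectral-flow, theta-quasi-periodicity, and parity-substitution argument, with your exponent bookkeeping $-a(\lambda_{r,s}-\theta)^{2}$, $2a(\lambda_{r,s}-\theta)$ checking out. The only quibble is that $\vartheta_{0,\varepsilon'}(u-\tfrac{\tau}{2};\tau)=(-1)^{\varepsilon'}z^{\frac{1}{2}}q^{-\frac{1}{8}}\vartheta_{1,\varepsilon'}(u;\tau)$ carries an extra sign you omitted from the intermediate identity but correctly reinstated as the overall factor $(-1)^{\varepsilon\varepsilon'}$.
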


\begin{proof}
By \cite[Theorem 7.13]{sato2016equivalences}, we have
$$\ch^{0,0}\bigl(\mathcal{L}(r,s)\bigr)(q,z,w)
=q^{-a\lambda_{r,s}^{2}}
z^{2a\lambda_{r,s}}w^{3(1-2a)}
\frac{\vartheta_{0,0}(u;\tau)}{\eta(\tau)^{3}}
\Phi_{p,p';r,s}\left(-zq^{\frac{1}{2}},
q\right).$$
By Lemma \ref{spchar} and some computations, we also have the formula for 
$\theta\neq0$ and $(\varepsilon,\varepsilon')\neq(0,0)$.
\end{proof}

\begin{dfn}\label{Adefinition}
An \emph{atypical character function} is
defined as the following 
meromorphic function on $\mathbb{H}\times\mathbb{C}\times\mathbb{C}$:
\begin{align*}
\mathbf{A}^{\varepsilon,\varepsilon'}_{r,s;\theta}(\tau,u,t):=
&\,(-1)^{\varepsilon\varepsilon'}q^{-a(\lambda_{r,s}-\theta+\frac{\varepsilon}{2})^{2}}
z^{2a(\lambda_{r,s}-\theta+\frac{\varepsilon}{2})}w^{3(1-2a)}\\
&\times\frac{\vartheta_{\varepsilon,\varepsilon'}(u;\tau)}{\eta(\tau)^{3}}
\Psi_{p,p';r,s}\left(u_{\varepsilon,\varepsilon'}+\theta\tau,\tau\right),
\end{align*}
where 
$u_{\varepsilon,\varepsilon'}:=u+\frac{1-\varepsilon}{2}\tau+\frac{1-\varepsilon'}{2}$.
\end{dfn}

The next lemma holds by definition.

\begin{lem}
As a function in $(q,z,w)=(e^{2\pi i\tau},e^{2\pi iu},e^{2\pi it})$. 
$$\mathbf{A}^{\varepsilon,\varepsilon'}_{r,s;\theta}(\tau,u,t)
=\ch^{\varepsilon,\varepsilon'}\bigl(\mathcal{L}(r,s)^{\theta}\bigr)(q,z,w)$$
holds if $|q|<|z|\cdot|q|^{\theta+\frac{1-\varepsilon}{2}}<1.$

\end{lem}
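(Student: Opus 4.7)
The plan is to reduce the claim to a convergence statement for the formal series $\Phi_{p,p';r,s}$, and then to verify that the stated inequality is exactly the domain on which the relevant substitution converges.

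First, I would compare the prefactors appearing in $\mathbf{A}^{\varepsilon,\varepsilon'}_{r,s;\theta}(\tau,u,t)$ (from Definition \ref{Adefinition}) and in the character formula of the preceding theorem. Both expressions share the factor
$$(-1)^{\varepsilon\varepsilon'}q^{-a(\lambda_{r,s}-\theta+\frac{\varepsilon}{2})^{2}}z^{2a(\lambda_{r,s}-\theta+\frac{\varepsilon}{2})}w^{3(1-2a)}\frac{\vartheta_{\varepsilon,\varepsilon'}(u;\tau)}{\eta(\tau)^{3}},$$
so matters reduce to identifying $\Psi_{p,p';r,s}(u_{\varepsilon,\varepsilon'}+\theta\tau,\tau)$ with the substituted formal series $\Phi_{p,p';r,s}\bigl((-1)^{1-\varepsilon'}zq^{\theta+(1-\varepsilon)/2},q\bigr)$ as functions of $(q,z)$.

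Next, I would perform the key exponential computation. Since
$$u_{\varepsilon,\varepsilon'}+\theta\tau=u+\left(\theta+\tfrac{1-\varepsilon}{2}\right)\tau+\tfrac{1-\varepsilon'}{2},$$
we have $e^{2\pi i(u_{\varepsilon,\varepsilon'}+\theta\tau)}=(-1)^{1-\varepsilon'}zq^{\theta+(1-\varepsilon)/2}$. Consequently the meromorphic value $\Psi_{p,p';r,s}(u_{\varepsilon,\varepsilon'}+\theta\tau,\tau)$ is precisely the value of $\Psi_{p,p';r,s}$, viewed in the variable $z':=e^{2\pi i(\cdot)}$, at the point $z'=(-1)^{1-\varepsilon'}zq^{\theta+(1-\varepsilon)/2}$.

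Finally, by construction $\Phi_{p,p';r,s}(z',q)$ is the Laurent expansion of $\Psi_{p,p';r,s}$ in the annular region $\mathbb{A}=\{|q|<|z'|<1\}$, hence converges absolutely and coincides with the meromorphic function $\Psi_{p,p';r,s}$ throughout $\mathbb{A}$. Substituting $z'=(-1)^{1-\varepsilon'}zq^{\theta+(1-\varepsilon)/2}$, the resulting series converges to the meromorphic value exactly when this substituted $z'$ lies in $\mathbb{A}$, i.e.\ when $|q|<|z|\cdot|q|^{\theta+(1-\varepsilon)/2}<1$, which is the stated condition. Combined with the first step, this yields the desired equality. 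No serious obstacle is anticipated; the only delicate point is the bookkeeping of the sign $(-1)^{1-\varepsilon'}$ and the fractional powers of $q$ introduced by $\theta$, $\varepsilon$, $\varepsilon'$, which must be tracked carefully to align the shifted theta-variable with the substitution argument of $\Phi_{p,p';r,s}$.
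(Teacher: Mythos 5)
Your proposal is correct and is precisely the unpacking of definitions that the paper intends: it states this lemma with no proof beyond the remark that it ``holds by definition,'' and your argument—matching the common prefactor, computing $e^{2\pi i(u_{\varepsilon,\varepsilon'}+\theta\tau)}=(-1)^{1-\varepsilon'}zq^{\theta+\frac{1-\varepsilon}{2}}$, and invoking that $\Phi_{p,p';r,s}$ is by construction the expansion of $\Psi_{p,p';r,s}$ on the annulus $\mathbb{A}$—is exactly the intended justification.
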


\begin{ex} In the trivial case, {\it i.e.} $(p,p';r,s)=(2,1;1,0)$, we have
$$\frac{\eta(\tau)^{3}}
{\vartheta_{\varepsilon,\varepsilon'}(u;\tau)}=
(-1)^{\varepsilon\varepsilon'}q^{-\frac{1}{2}(-\theta+\frac{\varepsilon}{2})^{2}}
z^{-\theta+\frac{\varepsilon}{2}}
\Psi_{2,1;1,0}\left(u_{\varepsilon,\varepsilon'}+\theta\tau;\tau\right).$$
In particular, when $\theta=0$ and $(\varepsilon,\varepsilon')=(1,1)$, it reproduces the 
Ramond denominator identity for $\widehat{\mf{gl}}_{1|1}$
(see \cite[Example 4.1]{kac1994integrable}):
$$\prod_{i=0}^{\infty}\frac{(1-q^{i})^{2}}{(1-zq^{i-1})(1-z^{-1}q^{i})}
=\sum_{n\in\mathbb{Z}}\frac{(-1)^{n}q^{\frac{n(n+1)}{2}}}{1-zq^{n}},$$
where $|q|<|z|<1$.
\end{ex}

\subsection{Discrete spectra}\label{discsp}

In this subsection, we define the index set $\mathscr{S}_{p,p'}$
as a finite subset of $\{(r,s;\theta)\,|\,1\leq r\leq p-1, 0\leq s\leq p'-1,\theta\in\mathbb{Z}\}$.

Let $(k,\ell)$ be the unique pair of integers such that
$p'=kp+\ell$ and $1\leq\ell\leq p-1.$
Since the pair $(p,p')$ is coprime, so is $(p,\ell)$.
Then we obtain the following lemma by easy computations.

\begin{lem}\label{dspectra}
For $0\leq m\leq p-1$, let
$(r_{m},s_{m})$ be the unique pair of integers such that
$0\leq r_{m}\leq p-1$,
$0\leq s_{m}\leq \ell-1$
and
$m=\ell r_{m}-ps_{m}.$
Let $\mathscr{S}_{p,p'}^{(m)}$ be 
the set of solutions $(r,s;\theta)$ of the equation
\begin{equation}\label{equality}
2m+2p'-p=2(r-2\theta)p'-(2s+1)p
\end{equation}
under the condition
$$1\leq r\leq p-1,\ 0\leq s\leq p'-1,\ \theta\in\mathbb{Z}.$$
Then we have
$$\mathscr{S}_{p,p'}^{(m)}=\left\{(r,s;\theta)=(2\theta+1+r_{m},kr_{m}+s_{m};\theta)\left|\,
-\left\lfloor\frac{r_{m}}{2}\right\rfloor\leq\theta
\leq\left\lfloor\frac{p-r_{m}}{2}\right\rfloor-1\right.\right\}.$$
\end{lem}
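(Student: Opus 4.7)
The plan is to solve the Diophantine equation (\ref{equality}) explicitly by a congruence argument and then translate the range constraints on $(r,s)$ into bounds on $\theta$. First, I would rewrite (\ref{equality}) in the simpler form
\begin{equation*}
(r - 2\theta - 1)\,p' = m + s p,
\end{equation*}
so that any solution satisfies $(r - 2\theta - 1)\ell \equiv m \pmod{p}$ after reducing modulo $p$ and using $p' \equiv \ell \pmod{p}$. Since $p' = kp + \ell$ is coprime to $p$, so is $\ell$; and $r_m$ is by definition the unique element of $\{0, 1, \ldots, p-1\}$ with $\ell r_m \equiv m \pmod{p}$. Hence $r - 2\theta - 1 \equiv r_m \pmod{p}$.

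Next, writing $r - 2\theta - 1 = r_m + jp$ for some $j \in \mathbb{Z}$ and substituting back using $m = \ell r_m - p s_m$ together with $p' = kp + \ell$, I find
\begin{equation*}
s = k r_m + s_m + j p'.
\end{equation*}
The crucial step is to pin down $j$: the elementary bound
\begin{equation*}
0 \leq k r_m + s_m \leq k(p-1) + (\ell - 1) = p' - k - 1 \leq p' - 1
\end{equation*}
shows that $k r_m + s_m$ already lies in the admissible range $\{0, 1, \ldots, p' - 1\}$, so the constraint $0 \leq s \leq p' - 1$ forces $j = 0$. This yields $r = 2\theta + 1 + r_m$ and $s = k r_m + s_m$, which is the asserted form of each element of $\mathscr{S}_{p,p'}^{(m)}$.

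Finally, the remaining constraint $1 \leq r \leq p - 1$ becomes $-r_m/2 \leq \theta \leq (p - 2 - r_m)/2$; a short case distinction on the parities of $r_m$ and $p - r_m$ converts this into the integer bounds $-\lfloor r_m/2 \rfloor \leq \theta \leq \lfloor (p - r_m)/2 \rfloor - 1$ stated in the lemma. The only non-routine step in the whole argument is the sharp estimate $k r_m + s_m \leq p' - 1$, which locks $j$ to zero and therefore produces a single $(r, s)$ out of each $\theta$; everything else is mechanical manipulation.
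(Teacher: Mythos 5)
Your proof is correct, and since the paper offers no argument beyond the remark that the lemma follows ``by easy computations,'' your write-up simply supplies the intended computation: reducing $(r-2\theta-1)p'=m+sp$ modulo $p$ to identify $r-2\theta-1$ with $r_m$ up to a multiple of $p$, using the sharp bound $0\leq kr_m+s_m\leq p'-1$ to force that multiple to vanish, and converting $1\leq r\leq p-1$ into the stated floor-function bounds on $\theta$. All steps check out (including the reversibility needed for the set equality), so nothing further is required.
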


\begin{dfn}\label{discrete}
We call 
$$\mathscr{S}_{p,p'}:=\bigcup_{m=0}^{p-1}\mathscr{S}_{p,p'}^{(m)}$$
the set of \emph{discrete spectra} of central charge $c=c_{p,p'}$.
\end{dfn}

\begin{lem} We have the following:

\begin{enumerate}
\item $|\mathscr{S}_{p,p'}|=\frac{p(p-1)}{2}.$
\item For $(r,s;\theta),(r',s';\theta')\in\mathscr{S}_{p,p'}$,
the two modules $\mathcal{L}(r,s)^{\theta}$ and $\mathcal{L}(r',s')^{\theta'}$ are isomorphic
 if and only if
$(r,s;\theta)=(r',s';\theta').$
\end{enumerate}
\end{lem}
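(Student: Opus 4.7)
The plan is to attack the two parts separately, both resting on one key observation: equation (\ref{equality}) inverts to
$$m = (r - 2\theta - 1)p' - sp,$$
which extracts $m$ uniquely from the triple $(r,s;\theta)$.

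For part (1), I would first use Lemma \ref{dspectra} to read off $|\mathscr{S}_{p,p'}^{(m)}| = \lfloor r_m/2\rfloor + \lfloor (p-r_m)/2\rfloor$. Because the inversion above recovers $m$ from any triple, the subsets $\mathscr{S}_{p,p'}^{(m)}$ for $m = 0, 1, \dots, p-1$ are automatically pairwise disjoint. Since $\gcd(p,\ell) = 1$, the assignment $m \mapsto r_m$ (defined by $\ell r_m \equiv m \pmod p$ with $0 \le r_m \le p-1$) is a bijection of $\{0,\dots,p-1\}$ onto itself, which lets me reindex the sum by $r = r_m$. The total count then collapses to
$$\sum_{r=0}^{p-1}\Bigl(\lfloor r/2\rfloor + \lfloor(p-r)/2\rfloor\Bigr),$$
which equals $p(p-1)/2$ by an elementary case split on the parity of $p$.

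For part (2), the ``if'' direction is trivial. For the converse, I would assume two triples $(r,s;\theta), (r',s';\theta')$ in $\mathscr{S}_{p,p'}$ give isomorphic modules, and let $m, m' \in \{0,\dots,p-1\}$ be the associated indices. Applying Lemma \ref{classifi2} (after a spectral-flow shift by $-\theta'$) to $\mathcal{L}(r,s)^{\theta-\theta'}\cong \mathcal{L}(r',s')$ yields three cases: the trivial equality, or one of two non-trivial identifications swapping $(r,s)$ with $(p-r,p'-1)$ or $(p-r,0)$ together with a prescribed spectral-flow shift. The plan is then to substitute each non-trivial case into the inversion formula and observe, by direct computation, that they force $m' = m + p$ and $m' = m - p$ respectively. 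Both are incompatible with $m, m' \in \{0,\dots,p-1\}$, leaving only the trivial case, which gives the desired equality.

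The main obstacle is modest but worth flagging: Lemma \ref{classifi2} is stated only for $p' \ge 2$, so the case $p' = 1$ requires separate bookkeeping. There one has instead the identifications $\mathcal{L}(r,0)^p \cong \mathcal{L}(r,0)$ and $\mathcal{L}(r,0)^r \cong \mathcal{L}(p-r,0)$, but an essentially identical $m$-shift computation shows that any non-trivial isomorphism forces $m' - m$ to be a nonzero multiple of $p$, again contradicting $m, m' \in \{0,\dots,p-1\}$. Once this small bookkeeping is in place, no further technical difficulty arises.
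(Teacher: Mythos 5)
Your proposal is correct and follows essentially the same route as the paper, which simply asserts that (1) is ``easily verified by calculation'' and that (2) ``directly follows from Lemma \ref{classifi2}''; your inversion $m=(r-2\theta-1)p'-sp$ of equation (\ref{equality}), the reindexing via the bijection $m\mapsto r_{m}$, and the verification that the nontrivial cases of Lemma \ref{classifi2} shift $m$ by $\pm p$ are exactly the computations being left implicit there. Your extra bookkeeping for $p'=1$ (where Lemma \ref{classifi2} does not apply) is a legitimate and correctly handled refinement of the same argument.
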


\begin{proof}
(1) is easily verified by calculation.
(2) directly follows from Lemma \ref{classifi2}. 
\end{proof}

\begin{ex} We present some examples.
\begin{enumerate}
\item
Since $(r_{0},s_{0})=(0,0),$ we have
$$\mathscr{S}_{p,p'}^{(0)}=\left\{
(2i-1,0;i-1)\left|\,1\leq i\leq\left\lfloor\frac{p}{2}\right\rfloor
\right.\right\}.$$

\item If $(p,p')=(p,kp+1)$, we have 
\begin{align*}
\mathscr{S}_{p,kp+1}^{(2u-1)}&=\left\{
\bigl(2i,(2u-1)k;i-u\bigr)\left|\, 1\leq i\leq
\left\lfloor\frac{p-1}{2}\right\rfloor\right.\right\},\\
\mathscr{S}_{p,kp+1}^{(2v)}&=\left\{
(2i-1,2vk;i-1-v)\left|\,1\leq i\leq\left\lfloor\frac{p}{2}\right\rfloor
\right.\right\}
\end{align*}
for $1\leq u\leq \left\lfloor\frac{p}{2}\right\rfloor$ and 
$1\leq v\leq \left\lfloor\frac{p-1}{2}\right\rfloor$.
In particular, when $k=0$, the modules corresponding to $\mathscr{S}_{p,1}$
coincide with the $\mathcal{N}=2$ unitary minimal series 
(see \cite{dorrzapf1998embedding} and \cite[Remark 7.12]{sato2016equivalences}).

\end{enumerate}
\end{ex}

\subsection{Modular transformation of atypical characters}\label{4.4}

For $r,r',s,s',\theta,\theta'\in\mathbb{Z}$, we set
$$S_{(r,s;\theta),(r',s';\theta')}^{aa,(\varepsilon,\varepsilon')}
:=i^{-\varepsilon\varepsilon'}
(-1)^{(1-\varepsilon')s+(1-\varepsilon)s'}
\frac{2}{p}
\sin(\pi arr')e^{\pi ia(r-2\theta-1+\varepsilon)(r'-2\theta'-1+\varepsilon')}.
$$
In addition, if $(r',s')\in K_{p,p'},$ we also set
\begin{flalign*}
S_{(r,s;\theta),(r',s';x)}^{at,(\varepsilon,\varepsilon')}
:=&\ i^{-\varepsilon\varepsilon'}(-1)^{r's+rs'}\frac{2}{p}\sin\bigl(\pi arr'\bigr)\\
&\times
\frac{\sin\left(\frac{\pi ss'}{a}\right)e^{2\pi x}+\sin\bigl(2\pi\lambda_{r',(1+s)s'}\bigr)}
{\cosh(2\pi x)-\cos(2\pi\lambda_{r',s'})}
e^{-4\pi a(\lambda_{r,s}-\theta+\frac{\varepsilon}{2})\left(x-\frac{i\varepsilon'}{2}\right)}
\end{flalign*}
for $x\in\mathbb{R}$.

Now we state the main result of this paper.

\begin{theom}\label{MTa}
The following hold:
\begin{align*}
&\mathbf{A}^{\varepsilon,\varepsilon'}_{r,s;\theta}
\left(-\frac{1}{\tau},\frac{u}{\tau},
t-\frac{u^{2}}{6\tau}\right)\\
&\ =\sum_{(r',s';\theta')\in\mathscr{S}_{p,p'}}
S_{(r,s;\theta),(r',s';\theta')}^{aa,(\varepsilon,\varepsilon')}
\mathbf{A}^{\varepsilon',\varepsilon}_{r',s';\theta'}(\tau,u,t)\\
&\ \ \ \ +\sum_{(r'',s'')\in K_{p,p'}}
\int_{\mathbb{R}}
\mathrm{d}x''\,S_{(r,s;\theta),(r'',s'';x'')}^{at,(\varepsilon,\varepsilon')}
\mathbf{T}^{\varepsilon',\varepsilon}_{r'',s'';x''}(\tau,u,t),\\
&\mathbf{A}^{\varepsilon,\varepsilon'}_{r,s;\theta}(\tau+1,u,t)\\
&\ =e^{2\pi i\left[
(ar-s-p')(\theta+\frac{1-\varepsilon}{2})-a(\theta+\frac{1-\varepsilon}{2})^{2}
-\frac{1-\varepsilon}{8}\right]}
\mathbf{A}^{\varepsilon,
\varepsilon\varepsilon'+(1-\varepsilon)(1-\varepsilon')}_{r,s;\theta}(u;\tau)
\end{align*}
for $1\leq r\leq p-1$, $0\leq s\leq p'-1$, and $\theta\in\mathbb{Z}$.
\end{theom}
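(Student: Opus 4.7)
The plan is to reduce to the case $(\varepsilon,\varepsilon')=(0,0)$ and $\theta=0$, compute the $S$-transformation of the remaining ``core'' piece, and then recover the general statement by standard theta/eta identities together with a spectral-flow shift. The $T$-transformation is essentially automatic: only the prefactor $q^{-a(\lambda_{r,s}-\theta+\varepsilon/2)^{2}}$, the theta function $\vartheta_{\varepsilon,\varepsilon'}$, and the argument shift $u_{\varepsilon,\varepsilon'}+\theta\tau$ inside $\Psi_{p,p';r,s}$ contribute under $\tau\mapsto\tau+1$, and combining their standard phases yields the prescribed exponent together with the swap $\varepsilon'\mapsto\varepsilon\varepsilon'+(1-\varepsilon)(1-\varepsilon')$ coming from $\vartheta_{\varepsilon,\varepsilon'}(u;\tau+1)$.

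For the $S$-transformation, the heart of the proof is the modular behaviour of the Appell--Lerch-type function $\Psi_{p,p';r,s}$ of level $p$. I would proceed via the Kazama--Suzuki coset of Remark~\ref{KWadm}, which realises each $\mathcal{L}(r,s)^{\theta}$ as the coset image of a Kac--Wakimoto admissible $\widehat{\mf{sl}}_{2}$-module $L(r,s)$ at the admissible level $k'=-2+a^{-1}$, and each typical $\mathcal{L}_{r',s';\lambda}$ as the image of a simple relaxed module. The $S$-transform of admissible $\widehat{\mf{sl}}_{2}$-characters as a discrete sum of admissible characters plus a continuously parameterised integral of relaxed characters is established in \cite{creutzig2013modular}; transferring this formula across the coset, and cancelling the auxiliary $\widehat{\mf{gl}}_{1}$ factor by the Jacobi triple product, converts it into the claimed decomposition for $\mathbf{A}^{\varepsilon,\varepsilon'}_{r,s;\theta}$. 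An equivalent and more direct route applies the Mordell-type $S$-transformation of higher-level Appell functions of \cite{semikhatov2005higher} to $\Psi_{p,p';r,s}$ and opens the resulting Mordell integral by the residue theorem.

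In either route the Mordell integral supplies the continuous typical contribution $\int_{\mathbb{R}}dx''\,\mathbf{T}^{\varepsilon',\varepsilon}_{r'',s'';x''}$, while the residues at the lattice of poles $D_{p,r}$ reassemble into a finite sum of atypical characters. The coefficient $S^{at,(\varepsilon,\varepsilon')}_{(r,s;\theta),(r',s';x)}$, with its characteristic denominator $\cosh(2\pi x)-\cos(2\pi\lambda_{r',s'})$, arises as the Fourier transform in $x$ of the Mordell kernel: its two poles at $x=\pm i\lambda_{r',s'}$ expand, after tracking the half-integer shifts encoded in $\varepsilon,\varepsilon'$, into the numerator $\sin(\pi ss'/a)e^{2\pi x}+\sin(2\pi\lambda_{r',(1+s)s'})$ and the sign $(-1)^{r's+rs'}$. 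The principal obstacle is the combinatorial bookkeeping in the discrete part. A priori the residue sum runs over all pairs $(r',s')$ and all spectral-flow twists $\theta'\in\mathbb{Z}$; to obtain a well-defined atypical contribution one must reorganise this sum using Lemma~\ref{classifi2} so that exactly one representative of each isomorphism class of atypical module appears, and the surviving $(r',s';\theta')$ are precisely the discrete spectrum $\mathscr{S}_{p,p'}$ of Definition~\ref{discrete} whose range is parameterised by Lemma~\ref{dspectra}. Verifying that the resulting $S^{aa}$-coefficient collapses exactly to $i^{-\varepsilon\varepsilon'}(-1)^{(1-\varepsilon')s+(1-\varepsilon)s'}\frac{2}{p}\sin(\pi arr')\,e^{\pi ia(r-2\theta-1+\varepsilon)(r'-2\theta'-1+\varepsilon')}$ is the most delicate step and will occupy the bulk of Appendix~\ref{proofofamt}.
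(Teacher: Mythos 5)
Your second route is the one the paper actually follows (Lemma \ref{itoAL} rewrites $\mathbf{A}^{\varepsilon,\varepsilon'}_{r,s;\theta}$ in terms of the level $2p'$ Appell--Lerch sum, and the $S$-transformation of that sum is then imported from \cite{alfes2014mock} rather than \cite{semikhatov2005higher}), and your description of the continuous part is essentially right: the Mordell integral, after the geometric--series resummation of the kernel, produces the $\bigl(\cosh(2\pi x)-\cos(2\pi\lambda_{r',s'})\bigr)^{-1}$ factor and hence $S^{at}$. But your account of where the discrete part comes from contains a genuine error. The atypical sum does \emph{not} arise from ``residues at the lattice of poles $D_{p,r}$.'' In the transformation formula of Proposition \ref{SofAppell} the right-hand side already splits as an Appell--Lerch term plus a theta-times-Mordell term; the discrete part \emph{is} the Appell--Lerch term $A_{2p'}\bigl(\frac{u}{p},\frac{v}{p};\frac{\tau}{p}\bigr)$, and the residue theorem enters only as a minor contour shift (from $\mathbb{R}+\frac{i}{4p'}$ to $\mathbb{R}$) at the very end of the continuous computation. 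Consequently the ``principal obstacle'' is not reorganising a residue sum via Lemma \ref{classifi2}: it is proving that the transformed Appell--Lerch term, once both it and the untransformed characters are reduced to level~$1$ sums $A_{1}$ (Lemma \ref{trans2}, Corollary \ref{untrans}, using the spectral-flow identity $A_{1}(u+n\tau,v+n\tau;\tau)=(-y)^{-n}q^{-n^{2}/2}A_{1}(u,v;\tau)$ to choose the shifts $N,M$), matches term-by-term the characters indexed by $\mathscr{S}_{p,p'}$ as parameterised in Lemma \ref{dspectra}. A residue computation on the Mordell kernel would produce false/partial theta contributions, not the sum over $\mathscr{S}_{p,p'}$, so the mechanism you propose would not close.

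Your first route (transferring the $\widehat{\mf{sl}}_{2}$ result of \cite{creutzig2013modular} across the Kazama--Suzuki coset) is also not available as stated: as the introduction of the paper points out, the formal characters of the relaxed $\widehat{\mf{sl}}_{2}$-modules indexed by $K_{p,p'}\times\mathbb{R}$ do not converge to functions on $\mathbb{H}$, so their $S$-transformation holds only in a distributional sense and cannot simply be ``cancelled against the $\widehat{\mf{gl}}_{1}$ factor'' to yield an identity of holomorphic (respectively meromorphic) functions of $(\tau,u,t)$. The coset is used in the paper only to establish the character formulae themselves, not the modular transformation. Finally, your treatment of the $T$-transformation is fine.
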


See Appendix \ref{proofofamt} for the proof of Theorem \ref{MTa}.

\begin{ex}
When $(p,p')=(3,2)$ and $(r,s;\theta)=(1,0;0)$, we have
\begin{align*}
\mathbf{A}^{\varepsilon,\varepsilon'}_{1,0;0}
\left(-\frac{1}{\tau},\frac{u}{\tau},t-\frac{u^{2}}{6\tau}\right)
=&\ \frac{e^{\frac{\pi i}{6}\varepsilon\varepsilon'}}{\sqrt{3}}
\Big(\mathbf{A}^{\varepsilon',\varepsilon}_{1,0;0}(\tau,u,t)
-e^{\frac{\pi i}{3}\varepsilon}
\mathbf{A}^{\varepsilon',\varepsilon}_{1,1;-1}(\tau,u,t)\\
&-e^{\frac{2\pi i}{3}\varepsilon}
\mathbf{A}^{\varepsilon',\varepsilon}_{2,0;0}(\tau,u,t)
-\int_{\mathbb{R}}
\frac{e^{-\frac{4\varepsilon\pi}{3}y}}
{\cosh(2\pi y)}
\mathbf{T}^{\varepsilon',\varepsilon}_{1,1;y}(\tau,u,t)\mathrm{d}y
\Big).
\end{align*}
\end{ex}

\subsection{Property of atypical $S$-data}

In this subsection, we verify the symmetricity and the unitarity of the data 
$S_{(r,s;\theta),(r',s';\theta')}^{aa,(\varepsilon,\varepsilon')}$.

\begin{lem}\label{calclem}
Let $r,r'\in\mathbb{Z}_{\geq0}$
such that $r,r'\leq p-1$ and $r-r'$ is even.
\begin{enumerate}
\item
We have
$$\sum_{j=0}^{p-1}
\sin\bigl(\pi arj\bigr)\sin\bigl(\pi ar'j\bigr)=\frac{p}{2}\delta_{r,r'}.$$
\item
Suppose that $p$ is odd.
Then we have
$$\sum_{j=1}^{\left\lfloor\frac{p-1}{2}\right\rfloor}
\sin\bigl(\pi ar(2j)\bigr)\sin\bigl(\pi ar'(2j)\bigr)
=\frac{p}{4}\delta_{r,r'}.$$
\end{enumerate}
\end{lem}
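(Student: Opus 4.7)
The plan is to reduce both identities to the standard sine orthogonality on $\{0,\dots,p-1\}$. For part (1), I would exploit the coprimality $\gcd(p,p')=1$. Since multiplication by $p'$ permutes $\mathbb{Z}/p\mathbb{Z}$, for each $j\in\{0,1,\dots,p-1\}$ there are unique integers $\sigma(j)\in\{0,\dots,p-1\}$ and $q(j)\in\mathbb{Z}$ with $p'j = \sigma(j)+pq(j)$, and $j\mapsto\sigma(j)$ is a bijection of $\{0,\dots,p-1\}$. Then
\[
\sin(\pi arj) \;=\; \sin\!\bigl(\tfrac{\pi r\sigma(j)}{p} + \pi r q(j)\bigr) \;=\; (-1)^{rq(j)}\sin\!\bigl(\tfrac{\pi r\sigma(j)}{p}\bigr),
\]
and multiplying by the analogous identity for $r'$ produces an overall sign $(-1)^{(r+r')q(j)}$, which equals $1$ because $r+r'$ is even (since $r-r'$ is, by hypothesis). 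Reindexing by $i:=\sigma(j)$ therefore reduces the left-hand side of (1) to the classical orthogonality sum $\sum_{i=0}^{p-1}\sin(\pi ri/p)\sin(\pi r'i/p)$, which evaluates via the product-to-sum formula and a finite geometric series to $\tfrac{p}{2}\delta_{r,r'}$ in the intended range $1\le r,r'\le p-1$.

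For part (2), I would derive the identity from part (1) by a parity-swapping symmetry. Set $f(k) := \sin(\pi ark)\sin(\pi ar'k)$. Using $\sin(\pi p'r)=0$ and $\cos(\pi p'r)=(-1)^{p'r}$, expanding the sines of $\pi p'r-\pi ark$ gives
\[
f(p-k) \;=\; (-1)^{p'(r+r')}f(k) \;=\; f(k),
\]
where the last equality holds because $r+r'$ is even. When $p$ is odd, the involution $k\mapsto p-k$ interchanges the even and odd elements of $\{1,\dots,p-1\}$, so the partial sums of $f$ over even and over odd indices in $\{1,\dots,p-1\}$ coincide. Combined with $f(0)=0$ and part (1), each of these half-sums equals $\tfrac{1}{2}\cdot\tfrac{p}{2}\delta_{r,r'} = \tfrac{p}{4}\delta_{r,r'}$. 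Finally, the even indices in $\{1,\dots,p-1\}$ are exactly $2j$ for $j=1,\dots,(p-1)/2 = \lfloor(p-1)/2\rfloor$, which is precisely the index set of the sum in (2).

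I do not anticipate a serious obstacle: the argument is an elementary orthogonality/symmetry computation made possible by $\gcd(p,p')=1$. The only care required is in tracking the two sign collapses $(-1)^{(r+r')q(j)} = 1$ and $(-1)^{p'(r+r')}=1$, both of which follow immediately from the hypothesis that $r-r'$ is even.
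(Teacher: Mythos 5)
Your proof is correct, and it takes a genuinely different (though equally elementary) route from the paper's. The paper disposes of both identities in one line by writing $\sin(x)=\frac{e^{ix}-e^{-ix}}{2i}$ and summing the resulting geometric series directly; the hypotheses enter there because $r\pm r'$ even makes each $e^{i\pi a(r\pm r')j}$ a $p$-th root of unity raised to the power $j$, and coprimality of $(p,p')$ forces the only resonance with $1\le r,r'\le p-1$ to be $r=r'$. You instead first exploit coprimality to reindex by the permutation $j\mapsto \sigma(j)= p'j\bmod p$, after which the sign $(-1)^{(r+r')q(j)}$ collapses to $1$ by the parity hypothesis and the sum becomes the classical orthogonality $\sum_{i=0}^{p-1}\sin(\pi r i/p)\sin(\pi r' i/p)=\frac{p}{2}\delta_{r,r'}$; and you obtain (2) from (1) via the reflection identity $f(p-k)=(-1)^{p'(r+r')}f(k)=f(k)$, which for odd $p$ exchanges the even- and odd-index halves of $\{1,\dots,p-1\}$ and hence splits the full sum exactly in two. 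Both arguments rest on the same two inputs (coprimality and the parity of $r\pm r'$), but your derivation of (2) as a corollary of (1) is cleaner and more illuminating than redoing a partial geometric sum over $j=1,\dots,\frac{p-1}{2}$, since it makes transparent why the answer is precisely half of (1). All the individual steps check out: $\sigma$ is a bijection because $\gcd(p,p')=1$, the sign computations are right, and $f(0)=0$ justifies dropping the $k=0$ term.

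One small caveat, which concerns the statement rather than your argument: as literally written the lemma admits $r=r'=0$, where both left-hand sides vanish while the right-hand sides equal $\frac{p}{2}$ and $\frac{p}{4}$, because the $\cos\bigl(\pi(r+r')i/p\bigr)$ term also resonates at $r+r'=0$. Your restriction to the range $1\le r,r'\le p-1$ when invoking the classical orthogonality is therefore necessary, and harmless: in the only place the lemma is used (the proof of Lemma \ref{unitarity}) the integers $r,r'$ are first coordinates of elements of $\mathscr{S}_{p,p'}$ and hence satisfy $r,r'\ge 1$.
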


\begin{proof}
By using $\sin(x)=\frac{e^{ix}-e^{-ix}}{2i}$ and straightforward calculation,
we can verify both (1) and (2).
\end{proof}

\begin{lem}\label{unitarity}
Let $(r,s;\theta)\in\mathscr{S}^{(m)}_{p,p'}$ and $(r',s';\theta')\in\mathscr{S}^{(m')}_{p,p'}$.
Then we have
$$\sum_{(r'',s'';\theta'')\in\mathscr{S}_{p,p'}}
\sin(\pi arr'')\sin(\pi ar'r'')
e^{\pi ia(r_{m}-r_{m'})
(r''-2\theta'')}=\frac{p^{2}}{4}\delta_{r,r'}\delta_{s,s'}\delta_{\theta,\theta'}.$$
\end{lem}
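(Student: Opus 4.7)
The plan is to first reparameterize the summation set $\mathscr{S}_{p,p'}$ in a cleaner way, then split by a parity indicator so that the sums over $n := r''$ and $\alpha := r_{m''}$ factor, and finally apply Lemma \ref{calclem} together with elementary geometric series.

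By Lemma \ref{dspectra}, every element $(r'',s'';\theta'')\in\mathscr{S}_{p,p'}^{(m'')}$ satisfies $r''=2\theta''+1+r_{m''}$ and $s''=kr_{m''}+s_{m''}$, and the map $(r'',s'';\theta'')\mapsto (r'',r_{m''})$ yields a bijection of $\mathscr{S}_{p,p'}$ onto $\{(n,\alpha)\in[1,p-1]\times[0,p-1]\mid n+\alpha\text{ odd}\}$; both sides have cardinality $p(p-1)/2$. Crucially, $r''-2\theta''=1+r_{m''}$, so writing $A:=r_m-r_{m'}$ the sum becomes
$$\Sigma \;=\; e^{\pi i aA}\sum_{\substack{(n,\alpha)\\ n+\alpha\text{ odd}}}\sin(\pi arn)\sin(\pi ar'n)\,e^{\pi iaA\alpha}.$$
Applying $[n+\alpha\text{ odd}]=\tfrac12(1-(-1)^{n+\alpha})$ separates the variables:
$\Sigma=\tfrac12 e^{\pi i aA}(U_1V_1-U_2V_2)$ where
$U_j:=\sum_{\alpha=0}^{p-1}(-1)^{(j-1)\alpha}e^{\pi iaA\alpha}$ and
$V_j:=\sum_{n=0}^{p-1}(-1)^{(j-1)n}\sin(\pi arn)\sin(\pi ar'n)$.

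The factors $U_1,U_2$ are explicit geometric sums on $\mathbb{Z}/p\mathbb{Z}$; since $\gcd(p,p')=1$ and $|A|<p$, the condition $e^{\pi iaA}=1$ forces $A=0$, and otherwise each $U_j$ vanishes or equals $2/(1-(-1)^{j-1}e^{\pi iaA})$ according to the parity of $p'A+(j-1)p$. The factor $V_1$ equals $\tfrac{p}{2}\delta_{r,r'}$ by Lemma \ref{calclem}(1) when $r-r'$ is even; for $r-r'$ odd it is computed via the product-to-sum identity $\sin x\sin y=\tfrac12[\cos(x-y)-\cos(x+y)]$ and the elementary fact $\mathrm{Re}\bigl(2/(1-e^{i\theta})\bigr)=1$ for $\theta\in\mathbb{R}\setminus 2\pi\mathbb{Z}$. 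The factor $V_2$ is treated analogously. Splitting by whether $A=0$, whether $r=r'$, and the parities of $p,p'$: on the diagonal $r=r'$, $A=0$, one finds $U_1V_1=\tfrac{p^2}{2}$, while $U_2V_2=0$ because $\sum_\alpha(-1)^\alpha=0$ when $p$ is even and, when $p$ is odd, Lemma \ref{calclem}(2) gives $V_2=T_e-T_o=\tfrac{p}{4}-\tfrac{p}{4}=0$; hence $\Sigma=\tfrac{p^2}{4}$. Off the diagonal, a parity check shows that $U_1V_1$ and $U_2V_2$ each vanish.

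The main technical obstacle is the exceptional subcase $r+r'=p$ in the off-diagonal analysis: there the denominator $1-e^{\pi ia(r+r')}$ or its twisted companion $1-e^{\pi i(a(r+r')+1)}$ can vanish, the generic geometric-series formula breaks down, and the corresponding $\sum_n$ inflates to $p$ rather than $1$. In every such exceptional configuration, however, a direct inspection of the parities of $p$, $p'$, and $A$ shows that the matching $U_j$ simultaneously vanishes, so that $U_jV_j=0$ is preserved; tracking this subcase is the only nonroutine step of the argument.
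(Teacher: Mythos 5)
Your proposal is correct and follows essentially the same route as the paper: both exploit the identity $r''-2\theta''=r_{m''}+1$ from Lemma \ref{dspectra} to reduce the sum to a product of a geometric sum in $r_{m''}$ and the parity-restricted sine sums handled by Lemma \ref{calclem}, with the same case analysis on the parities of $p$ and $p'(r_{m}-r_{m'})$. Your indicator-function separation $[n+\alpha\text{ odd}]=\tfrac12(1-(-1)^{n+\alpha})$ into $U_{1}V_{1}-U_{2}V_{2}$ is just a clean repackaging of the paper's split into $S_{\rm odd}$ and $S_{\rm even}$, and your treatment of the exceptional subcase $r+r'=p$ (where the matching $U_{j}$ vanishes) checks out.
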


\begin{proof}
Denote the left hand side by $S$.
It is rewritten as
$$S=
\sum_{m''=0}^{p-1}
e^{\pi ia(r_{m}-r_{m'})(r_{m''}+1)}
\sum_{(r'',s'';\theta'')\in\mathscr{S}^{(m'')}_{p,p'}}
\sin(\pi arr'')\sin(\pi ar'r'').$$
We set
\begin{align*}
&S_{\rm odd}:=\sum_{j=1}^{\left\lfloor\frac{p}{2}\right\rfloor}
\sin\bigl(\pi ar(2j-1)\bigr)\sin\bigl(\pi ar'(2j-1)\bigr),\\
&S_{\rm even}:=\sum_{j=1}^{\left\lfloor\frac{p-1}{2}\right\rfloor}
\sin\bigl(\pi ar(2j)\bigr)\sin\bigl(\pi ar'(2j)\bigr).
\end{align*}
Then, by Lemma \ref{dspectra}, we have
$$\sum_{(r'',s'';\theta'')\in\mathscr{S}^{(m'')}_{p,p'}}
\sin(\pi arr'')\sin(\pi ar'r'')
=
\begin{cases}
S_{\rm odd} & \text{ if $r_{m''}$ is even,}\\
S_{\rm even} & \text{ if $r_{m''}$ is odd.}
\end{cases}$$
By easy computations, we have
$$S_{\rm odd}=
\begin{cases}
\displaystyle
(-1)^{p'(r_{m}-r_{m'})}S_{\rm odd}&
\text{ if }p\text{ is even,}\\
\displaystyle
(-1)^{p'(r_{m}-r_{m'})}S_{\rm even}&
\text{ if }p\text{ is odd}
\end{cases}$$
and
$$S_{\rm even}=
\begin{cases}
\displaystyle
(-1)^{p'(r_{m}-r_{m'})}S_{\rm even}&
\text{ if }p\text{ is even,}\\
\displaystyle
(-1)^{p'(r_{m}-r_{m'})}S_{\rm odd}&
\text{ if }p\text{ is odd.}
\end{cases}$$

First, we assume that $p$ is odd. Since
$\{r_{m''}\mid0\leq m''\leq p-1\}=\{0,1,\ldots,p-1\}$,
 we have
$S=
\sum_{\ell=1}^{p}
e^{2\pi ia(r_{m}-r_{m'})\ell}
S_{\rm even}.$
Since $r_{m}=r_{m'}$ holds if and only if $m=m'$,
combining with Lemma \ref{calclem} (2), we obtain
$S=\frac{p^{2}}{4}\delta_{r,r'}\delta_{s,s'}\delta_{\theta,\theta'}.$

Next, we assume that $p$ is even.
Then $p'$ is odd and it follows that $S=0$ holds if $r_{m}-r_{m'}$ is odd.
In what follows, we assume that $r_{m}-r_{m'}$ is even.
Then we have
\begin{align*}
S&=\sum_{r_{m''}\colon\text{odd}}
e^{\pi ia(r_{m}-r_{m'})(r_{m''}+1)}
S_{\rm even}+\sum_{r_{m''}\colon\text{even}}
e^{\pi ia(r_{m}-r_{m'})(r_{m''}+1)}
S_{\rm odd}\\
&=\frac{p}{2}\delta_{m,m'}
\sum_{\ell=1}^{p-1}
\sin\bigl(\pi ar\ell\bigr)\sin\bigl(\pi ar'\ell\bigr)
=\frac{p^{2}}{4}\delta_{r,r'}\delta_{s,s'}\delta_{\theta,\theta'}.
\end{align*}
This completes the proof.
\end{proof}

\begin{prp}
Let $\lambda=(r,s;\theta),\,\lambda'=(r',s';\theta')\in\mathscr{S}_{p,p'}$.
Then we have 
$$S_{\lambda,\lambda'}^{aa,(\varepsilon,\varepsilon')}
=S_{\lambda',\lambda}^{aa,(\varepsilon',\varepsilon)}$$
and 
$$\displaystyle\sum_{\lambda''\in\mathscr{S}_{p,p'}}
S_{\lambda,\lambda''}^{aa,(\varepsilon,\varepsilon')}
\overline{S_{\lambda'',\lambda'}^{aa,(\varepsilon',\varepsilon)}}
=\delta_{\lambda,\lambda'}.$$
\end{prp}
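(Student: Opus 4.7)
The plan is to verify both identities by direct manipulation of the explicit formula for $S^{aa,(\varepsilon,\varepsilon')}_{(r,s;\theta),(r',s';\theta')}$, reducing the unitarity relation to Lemma \ref{unitarity}. Symmetry is essentially by inspection: each of the four factors $i^{-\varepsilon\varepsilon'}$, $(-1)^{(1-\varepsilon')s+(1-\varepsilon)s'}$, $\sin(\pi a r r')$, and $e^{\pi i a(r-2\theta-1+\varepsilon)(r'-2\theta'-1+\varepsilon')}$ is invariant under the simultaneous swap $(\lambda,\varepsilon)\leftrightarrow(\lambda',\varepsilon')$, so no further argument is needed.

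For unitarity, I will use that $a=p'/p$ is real and $\varepsilon,s,s''$ are integers, so the only genuinely complex factors in $S^{aa,(\varepsilon',\varepsilon)}_{\lambda'',\lambda'}$ are $i^{-\varepsilon\varepsilon'}$ and the exponential. On taking the conjugate and multiplying by $S^{aa,(\varepsilon,\varepsilon')}_{\lambda,\lambda''}$, the two phases $i^{-\varepsilon\varepsilon'}$ and $i^{\varepsilon\varepsilon'}$ cancel; the sign factors combine to $(-1)^{(1-\varepsilon')(s+s')}$, since the two occurrences of $(1-\varepsilon)s''$ cancel modulo $2$; and the exponentials combine into $\exp\bigl[\pi i a\,(r''-2\theta''-1+\varepsilon')\bigl((r-2\theta)-(r'-2\theta')\bigr)\bigr]$. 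The constant prefactor $(2/p)^2=4/p^2$ is what will later be cancelled by the $p^2/4$ produced by Lemma \ref{unitarity}.

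The central step is to translate this last exponent into the form appearing in Lemma \ref{unitarity}. Using the explicit parametrization of $\mathscr{S}^{(m)}_{p,p'}$ in Lemma \ref{dspectra}, the element $(r,s;\theta)\in\mathscr{S}^{(m)}_{p,p'}$ satisfies $r=2\theta+1+r_m$, hence $r-2\theta = 1+r_m$, and similarly $r'-2\theta' = 1+r_{m'}$. Thus the bracket above equals $r_m - r_{m'}$. The additive term $-1+\varepsilon'$ inside the factor $(r''-2\theta''-1+\varepsilon')$ does not depend on the summation variable and pulls out as the overall constant $e^{\pi i a(\varepsilon'-1)(r_m-r_{m'})}$. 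What remains inside the sum over $\lambda''\in\mathscr{S}_{p,p'}$ is exactly $\sin(\pi a r r'')\sin(\pi a r' r'')\,e^{\pi i a(r_m-r_{m'})(r''-2\theta'')}$, to which Lemma \ref{unitarity} applies and yields $(p^2/4)\,\delta_{r,r'}\delta_{s,s'}\delta_{\theta,\theta'}$.

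Putting the pieces together gives
\[\sum_{\lambda''\in\mathscr{S}_{p,p'}}S^{aa,(\varepsilon,\varepsilon')}_{\lambda,\lambda''}\,\overline{S^{aa,(\varepsilon',\varepsilon)}_{\lambda'',\lambda'}}\;=\;(-1)^{(1-\varepsilon')(s+s')}\,e^{\pi i a(\varepsilon'-1)(r_m-r_{m'})}\,\delta_{\lambda,\lambda'}.\]
On the diagonal $\lambda=\lambda'$ one automatically has $m=m'$, so $r_m=r_{m'}$ makes the exponential equal to $1$, and the sign reduces to $(-1)^{2(1-\varepsilon')s}=1$, producing the desired $\delta_{\lambda,\lambda'}$. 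The only nontrivial input is Lemma \ref{unitarity} together with the identification $r-2\theta=1+r_m$ coming from Lemma \ref{dspectra}; once those are in place, the argument is purely bookkeeping, so I expect the main difficulty to lie not in this proposition but in Lemma \ref{unitarity} itself, which has already been addressed.
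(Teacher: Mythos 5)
Your proof is correct and follows essentially the same route as the paper's (much terser) proof: symmetry by inspection, and unitarity by multiplying out the explicit formula, extracting a $\lambda''$-independent prefactor that equals $1$ on the diagonal, and reducing the remaining sum to Lemma \ref{unitarity} via the identification $r-2\theta=1+r_{m}$ from Lemma \ref{dspectra}. (Your extracted exponential $e^{\pi ia(\varepsilon'-1)(r_{m}-r_{m'})}$ carries the opposite sign in the exponent from the paper's stated prefactor $e^{2\pi ia(1-\varepsilon')(\lambda_{r,s}-\theta-(\lambda_{r',s'}-\theta'))}$, but since both are multiplied by $\delta_{\lambda,\lambda'}$ the discrepancy is immaterial.)
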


\begin{proof}
Since the former equality is obvious, we verify the latter one.
The left hand side is equal to
$$e^{2\pi ia(1-\varepsilon')\bigr(\lambda_{r,s}-\theta-(\lambda_{r',s'}-\theta')\bigr)}
\times \frac{4}{p^{2}}\times S.$$
Therefore it immediately follows from the Lemma \ref{unitarity}.
\end{proof}

\section{Conjecture on Verlinde coefficients}

In this section, we consider the structure constants of the ``Verlinde ring''
of the simple VOSA $L_{c_{p,p'}}$ in the spirit of \cite{creutzig2013relating}. 
See \cite{creutzig2013relating}, \cite[\S 5]{alfes2014mock}, and references therein for the details.
Throughout this section, we assume that $p'\geq2$.

Let $\lambda=(r,s;\theta)\in\mathscr{S}_{p,p'}$,
$\mu_{j}=(r_{j},s_{j})\in K_{p,p'}$, and
$x_{j}\in \mathbb{C}\setminus iS_{r_{j},s_{j}}$
for $j\in\{1,2\}.$
Then we define the (atypical-typical-typical) \emph{Verlinde coefficient} by
$$N^{(\mu_{2};x_{2})}_{\lambda,(\mu_{1};x_{1})}:=\sum_{\mu'\in K_{p,p'}}
\int_{\mathbb{R}}\mathrm{d}x'
\frac{S_{\lambda,(\mu';x')}^{at,(0,0)}
S_{(\mu_{1};x_{1}),(\mu';x')}^{tt,(0,0)}
S_{(\mu_{2};-x_{2}),(\mu';x')}^{tt,(0,0)}}{S_{(1,0;0),(\mu';x')}^{at,(0,0)}}.
$$

\begin{conj} The Verlinde coefficient $N^{\mu_{2}}_{\lambda,\mu_{1}}$ 
can be expressed as 
a certain delta function
with a non-negative integer coffiecient.
In addition, the integer coincides with the fusion rule of 
the corresponding simple $L_{c_{p,p'}}$-modules,
{\it i.e.} the dimension of the space of intertwining operators
of type
$$\left(
\begin{array}{c}
\mathcal{L}_{r_{2},s_{2};ix_{2}}\\
\mathcal{L}(r,s)^{\theta}\ \ \mathcal{L}_{r_{1},s_{1};ix_{1}}
\end{array}
\right).$$
\end{conj}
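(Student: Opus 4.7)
The plan is to substitute the explicit $S$-data from Theorems \ref{MTt} and \ref{MTa} into the defining integral, carry out the $x'$-integration as a Fourier transform producing delta distributions, and then identify the resulting non-negative integer coefficients with fusion multiplicities via the Kazama--Suzuki coset correspondence.

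First I would simplify the ratio of atypical-typical $S$-data. The common denominator $\cosh(2\pi x')-\cos(2\pi\lambda_{r',s'})$ cancels between $S^{at,(0,0)}_{\lambda,(\mu';x')}$ and $S^{at,(0,0)}_{(1,0;0),(\mu';x')}$, so the residual $x'$-dependence in the ratio reduces to $\bigl(A_{r,s;\mu'}+B_{r,s;\mu'}\,e^{2\pi x'}\bigr)\,e^{-4\pi a(\lambda_{r,s}-\theta)x'}$, where
$A_{r,s;\mu'}=\sin(2\pi\lambda_{r',(1+s)s'})/\sin(2\pi\lambda_{r',s'})$ and
$B_{r,s;\mu'}=\sin(\pi ss'/a)/\sin(2\pi\lambda_{r',s'})$
are $x'$-independent. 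Combined with the Gaussian factor $2a\,e^{4\pi ia(x_2-x_1)x'}$ from the product of the two typical-typical entries, the $x'$-integrand becomes a finite linear combination of pure exponentials.

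Second, the identity $\int_{\mathbb{R}}e^{4\pi ia\xi x'}\,dx'=\frac{1}{2a}\delta(\xi)$ applied term by term produces delta distributions supported at $\xi_0=(x_2-x_1)+i(\lambda_{r,s}-\theta)$ and $\xi_1=(x_2-x_1)+i(\lambda_{r,s+1}-\theta)$, using the relation $\lambda_{r,s}-\tfrac{1}{2a}=\lambda_{r,s+1}$. These support points are precisely the imaginary weight shifts one expects when fusing $\mathcal{L}(r,s)^{\theta}$ with a typical module. The remaining sum over $\mu'\in K_{p,p'}$ then takes the form $\sum_{\mu'}S_{\mu_1,\mu'}S_{\mu_2,\mu'}\,T_{(r,s);\mu'}^{(i)}$ for $i\in\{0,1\}$, which, after reorganization using the Virasoro Verlinde formula (\ref{Svir}) and the BPZ fusion coefficients of the minimal model of central charge $1-6(p-p')^2/(pp')$, should evaluate to a non-negative integer: this yields the conjectured form of $N^{(\mu_2;x_2)}_{\lambda,(\mu_1;x_1)}$ as a delta distribution with integer coefficients.

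Third, to match these integers with actual fusion multiplicities of $L_{c_{p,p'}}$-modules, I would invoke the Kazama--Suzuki coset equivalence of \cite{sato2016equivalences}. Under this equivalence, $\mathcal{N}=2$ fusion factors through a free-boson piece (responsible for the imaginary shifts in the delta supports) and the $\widehat{\mathfrak{sl}}_2$ fusion at Kac--Wakimoto admissible level $k'=-2+p/p'$. The fusion rules between admissible and simple relaxed highest weight $\widehat{\mathfrak{sl}}_2$-modules are available from \cite{creutzig2013modular}, and a spectral-flow-tracking and index-matching argument should then transfer the multiplicities to the $\mathcal{N}=2$ side.

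The hardest step will be the final identification with intertwining operator dimensions. Since $L_{c_{p,p'}}$ is not $C_2$-cofinite for $p'\geq 2$, there is no automatic Verlinde theorem: one must either invoke the generalized framework of \cite{creutzig2013relating} and verify its hypotheses in the $\mathcal{N}=2$ setting, or compute the intertwining operator spaces directly by a super-analogue of Zhu-algebra techniques. A subtler technical point is that $\int e^{4\pi ia\xi x'}\,dx'=\frac{1}{2a}\delta(\xi)$ is, strictly speaking, a statement for real $\xi$, whereas the $\xi_j$ here are generically complex; the rigorous distributional interpretation therefore requires an analytic continuation argument, treating $x_1,x_2$ as complex parameters on suitable lines and taking distributional limits in a way compatible with the convergence region for the $e^{2\pi x'}$-expansion.
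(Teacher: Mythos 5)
The statement you are trying to prove is labelled a \emph{Conjecture} in the paper, and the paper supplies no proof: it only computes the $(p,p')=(3,2)$ example, observes that $N^{\mu_2}_{\lambda_\epsilon,\mu_1}=\delta(x_1-\tfrac{i\epsilon}{2}-x_2)$ there, and explicitly remarks that the corresponding fusion rules ``have not ever appeared in the literature.'' So there is no proof in the paper to compare yours against, and your text should be judged as a proof on its own terms. Your first two steps are sound and do reproduce the paper's example: the denominator $\cosh(2\pi x')-\cos(2\pi\lambda_{r',s'})$ indeed cancels in the ratio $S^{at,(0,0)}_{\lambda,(\mu';x')}/S^{at,(0,0)}_{(1,0;0),(\mu';x')}$ (since for $(1,0;0)$ the numerator degenerates to $\sin(2\pi\lambda_{r',s'})$ and $\lambda_{1,0}=0$), and the resulting Fourier integrals place deltas at $x_2=x_1+i(\lambda_{r,s}-\theta)$ and $x_2=x_1+i(\lambda_{r,s+1}-\theta)$, consistent with the $(3,2)$ computation.

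The genuine gaps are in your second and third steps, and they are exactly where the content of the conjecture lies. After the $x'$-integration, the sum over $\mu'$ has the form $\sum_{\mu'}S_{\mu_1,\mu'}S_{\mu_2,\mu'}\,T^{(i)}_{(r,s);\mu'}$ where $T^{(i)}_{(r,s);\mu'}$ involves the ratios $(-1)^{r's+rs'}\sin(\pi arr')/\bigl((-1)^{s'}\sin(\pi ar')\bigr)$ and $\sin(2\pi\lambda_{r',(1+s)s'})/\sin(2\pi\lambda_{r',s'})$ or $\sin(\pi ss'/a)/\sin(2\pi\lambda_{r',s'})$. The Virasoro Verlinde formula (\ref{Svir}) only helps if you can show that $\mu'\mapsto T^{(i)}_{(r,s);\mu'}$ decomposes as a \emph{non-negative integer} combination of fusion-ring characters $\mu'\mapsto S_{\nu,\mu'}/S_{(1,1),\mu'}$; that positivity and integrality statement is precisely the first assertion of the conjecture, so your argument is circular at this point rather than a proof. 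Note also that $\sin(\pi a rr')$ involves $a=p'/p$, not the BPZ $S$-matrix entry $\sin(\pi(p-p')rr'/p)$, so the identification with minimal-model fusion data is not automatic. The third step is likewise only a program: since $L_{c_{p,p'}}$ is not $C_2$-cofinite for $p'\geq 2$, no Verlinde-type theorem applies, the hypotheses of the framework of \cite{creutzig2013relating} have not been verified here, and the intertwining-operator dimensions on the right-hand side are themselves unknown. What you have written is a reasonable reduction of the conjecture to two open problems, not a proof of it.
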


See \cite[Definition 1.6]{kac1994vertex} for the definition of intertwining operators.

\begin{ex}
When $(p,p')=(3,2)$, we have 
$$\mathscr{S}_{3,2}^{(0)}=\left\{\lambda_{0}:=(1,0;0)\right\},\ 
\mathscr{S}_{3,2}^{(1)}=\left\{\lambda_{-1}:=(1,1;-1)
\right\},\ 
\mathscr{S}_{3,2}^{(2)}=\left\{\lambda_{1}:=(2,0;0)\right\}.$$
We also have $K_{3,2}=\{(1,1)\}$ and $S_{1,1}=\{\frac{1}{4}\}+\frac{1}{2}\mathbb{Z}$.
For $j\in\{1,2\}$ and $x_{j}\in \mathbb{C}\setminus iS_{1,1},$
we set $\mu_{j}:=(1,1;x_{j})$.
Then we obtain
$$N^{\mu_{2}}_{\lambda_{\epsilon},\mu_{1}}=\int_{\mathbb{R}}\mathrm{d}x'
e^{2\pi ix'(x_{1}-\frac{i\epsilon}{2}-x_{2})}
=\delta\Bigl(x_{1}-\frac{i\epsilon}{2}-x_{2}\Bigr)$$
for $\epsilon\in\{0,\pm1\}$.
In this case, the conjecture states that
$${\sf dim}\,\mathcal{I}(x_{1},x_{2},\epsilon)=
\begin{cases}
1 & \text{ if }x_{2}=x_{1}-\frac{i\epsilon}{2},\\
0 & \text{ otherwise}
\end{cases}
$$
holds for $(x_{1}, x_{2}, \epsilon)$ as above,
where $\mathcal{I}(x_{1},x_{2},\epsilon)$ is the space of $\mathbb{Z}_{2}$-graded intertwining operators of type
$$\left(
\begin{array}{c}
\mathcal{L}_{-\frac{1}{8}+\frac{2}{3}(x_{2})^{2},\frac{4}{3}ix_{2},-1}\\
\mathcal{L}_{\frac{1}{3}|\epsilon|,\frac{2}{3}\epsilon,-1}\ \ 
\mathcal{L}_{-\frac{1}{8}+\frac{2}{3}(x_{1})^{2},\frac{4}{3}ix_{1},-1}
\end{array}
\right).$$
To the best of our knowledge, 
the fusion rule between the above modules
has not ever appeared in the literature.
\end{ex}

\appendix

\section{Spectral flow automorphisms}\label{spfl}

In this section, we give a brief review of spectral flow automorphisms.

\subsection{Definition}

For $\theta\in\mathbb{Z}$, the \emph{spectral flow automorphism}
 ${\sf U}_{\theta}$ of $\mf{ns}_{2}$ is defined by
\begin{equation*}
\textstyle
{\sf U}_{\theta}(L_{n})
=L_{n}+\theta J_{n}
+\frac{\ \theta^{2}}{6}C\delta_{n,0},\ 
{\sf U}_{\theta}(J_{n})
=J_{n}+\frac{\theta}{3}C\delta_{n,0},\ 
{\sf U}_{\theta}(G^{\pm}_{r})
=G^{\pm}_{r\pm\theta},\ 
{\sf U}_{\theta}(C)=C.
\end{equation*}
Let
${\sf U}_{\theta}^{*}\colon\mf{ns}_{2}\text{\sf-mod}
\longrightarrow \mf{ns}_{2}\text{\sf-mod}$
denote the endofunctor induced  by the pullback action with respect to ${\sf U}_{\theta}$.
For simplicity, we write $M^{\theta}$ for ${\sf U}_{\theta}^{*}(M)$ in this paper.
Since we have 
${\sf U}_{0}=\id_{\mf{ns}_{2}}$ and
${\sf U}_{\theta}\circ {\sf U}_{\theta'}
={\sf U}_{\theta+\theta'}$
for any $\theta,\theta'\in\mathbb{Z}$,
the two functors ${\sf U}_{\theta}^{*}$ and ${\sf U}_{-\theta}^{*}$
are mutually inverse.

Note that we have the following lemma:
\begin{lem}[{\cite[Lemma B.4]{sato2016equivalences}}]
For any $c\in\mathbb{C}$, 
the restriction of ${\sf U}_{\theta}^{*}$ gives the categorical isomorphism
$L_{c}\text{\sf-mod}\to L_{c}\text{\sf-mod};\ M\mapsto M^{\theta}.$
\end{lem}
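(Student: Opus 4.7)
The plan consists of three steps.

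First, the formulas defining ${\sf U}_{\theta}$ show that it fixes the central element $C$, so ${\sf U}_{\theta}^{*}$ preserves central charge; together with ${\sf U}_{0} = \id_{\mf{ns}_{2}}$ and ${\sf U}_{\theta} \circ {\sf U}_{-\theta} = \id_{\mf{ns}_{2}}$, this already implies that ${\sf U}_{\pm\theta}^{*}$ are mutually inverse autoequivalences of the category of $\mf{ns}_{2}$-modules of central charge $c$. Hence it suffices to verify that ${\sf U}_{\theta}^{*}$ sends objects of $L_{c}\text{\sf-mod}$ into $L_{c}\text{\sf-mod}$; the reverse inclusion then follows by replacing $\theta$ with $-\theta$.

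Second, I would handle the simple case directly. Recall that simple $L_{c}$-modules are precisely the simple $V_{c}$-modules annihilated by the maximal proper ideal $I_{c} \subset V_{c}$. Adamovi\'c's classification (the theorem preceding Definition~\ref{typical_atypical}) gives an explicit list. Computing the shifted highest weight $\bigl(h + \theta j + \frac{\theta^{2} c}{6},\, j + \frac{\theta c}{3},\, c\bigr)$ of the spectral-flow twist from the defining formulas, one verifies term by term that the twist of each module on this list is again on the list: typical modules transform as $\mathcal{L}_{r,s;\lambda}^{\theta} \cong \mathcal{L}_{r,s;\lambda-\theta}$, while atypical modules are covered by Lemma~\ref{classifi2}. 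Hence ${\sf U}_{\theta}^{*}$ permutes the simple objects of $L_{c}\text{\sf-mod}$.

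Third, for a general $M \in L_{c}\text{\sf-mod}$, one checks that each singular-vector operator coming from $I_{c}$ continues to annihilate $M^{\theta}$. Such an operator is a normally ordered polynomial in the modes $L_{n}, J_{n}, G^{\pm}_{r}$ acting via the twisted structure, equivalently a normally ordered polynomial in ${\sf U}_{\theta}(L_{n}), {\sf U}_{\theta}(J_{n}), {\sf U}_{\theta}(G^{\pm}_{r})$ acting on $M$; since ${\sf U}_{\theta}$ is a Lie superalgebra automorphism respecting the defining relations, the twisted polynomial may be re-expanded as the field mode of a suitable element of $I_{c}$ acting on $M$, which vanishes by the hypothesis that $M$ is an $L_{c}$-module.

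The main obstacle is the third step: the category $L_{c}\text{\sf-mod}$ is not in general closed under extensions inside $V_{c}\text{\sf-mod}$, so one cannot simply reduce to composition factors using step two. Instead one must directly track how the spectral flow transforms the explicit generators of $I_{c}$ into field modes that still sit inside the ideal after re-expansion in the original $\mf{ns}_{2}$-action on $M$; this is the content of \cite[Lemma B.4]{sato2016equivalences}, where the bookkeeping is carried out using the explicit description of $I_{c}$ and the spectral-flow invariance of the vacuum relations $G^{\pm}_{-1/2}|0,0,c\rangle = 0$ (after shifting the mode index by $\pm \theta$).
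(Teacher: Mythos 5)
The paper does not prove this lemma in the present text: it is quoted verbatim from \cite[Lemma B.4]{sato2016equivalences}, so there is no internal proof to compare against. Judged on its own terms, your proposal correctly reduces the problem (step one) to showing that ${\sf U}_{\theta}^{*}$ maps $L_{c}$-modules to $L_{c}$-modules, but the step that carries all the content, your step three, is not actually established. Knowing that ${\sf U}_{\theta}$ is a Lie superalgebra automorphism of $\mf{ns}_{2}$ only guarantees that $M^{\theta}$ is again a restricted $\mf{ns}_{2}$-module of central charge $c$; it does not imply that the modes of elements of the maximal ideal $I_{c}\subset V_{c}$ still annihilate $M^{\theta}$. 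The ``re-expansion'' you invoke is exactly the nontrivial claim, and it cannot be deduced from the commutation relations alone, because ${\sf U}_{\theta}$ is \emph{not} induced by an automorphism of the VOSA $V_{c}$ (it shifts the mode indices of $G^{\pm}$ and so does not respect the state--field correspondence). The standard way to close this gap is to realize the spectral flow by Li's operator $\Delta(h,z)=z^{h_{0}}\exp\bigl(\sum_{n\geq1}\tfrac{h_{n}}{-n}(-z)^{-n}\bigr)$ attached to a suitable multiple $h$ of the $U(1)$-current $J$, normalized so that $[h_{0},G^{\pm}_{r}]=\pm\theta G^{\pm}_{r}$: the twisted action of $v\in V_{c}$ on $M$ is then $Y_{M}\bigl(\Delta(h,z)v,z\bigr)$, and since $I_{c}$ is an ideal it is stable under all modes $h_{n}$, so $\Delta(h,z)$ maps $I_{c}$ into $I_{c}$ tensored with powers of $z$ and the twisted action of every element of $I_{c}$ vanishes on any $L_{c}$-module. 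Without an argument of this kind your step three is a restatement of the lemma, as you in effect concede by deferring to the cited reference.

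A secondary problem is step two: it is both redundant (a correct step three covers simple modules along with everything else) and unavailable at the stated generality. Adamovi\'{c}'s classification applies only to $c=c_{p,p'}$ and only to simple $\frac{1}{2}\mathbb{Z}_{\geq0}$-gradable modules, whereas the lemma is asserted for every $c\in\mathbb{C}$ and for the whole module category. You correctly observe that one cannot reduce a general module to its composition factors, which is precisely why the classification cannot be the engine of the proof; the $\Delta$-operator argument above is what makes the statement uniform in $c$ and in $M$.
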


\subsection{Spectral flow on irreducible highest weight modules}

It is easy to verify the next lemma.

\begin{lem} Let $(h,j,c)\in\mathbb{C}^{3}$. Then we have
$$(\mathcal{L}_{h,j,c})^{1}\cong
\begin{cases}
\mathcal{L}_{h+j+\frac{c}{6},j+\frac{c}{3},c} & \text{ if }h=-\frac{j}{2},\\
\mathcal{L}_{h+j+\frac{c}{6}-\frac{1}{2},j+\frac{c}{3}-1,c} & \text{ if }h\neq-\frac{j}{2}.
\end{cases}
$$
$$(\mathcal{L}_{h,j,c})^{-1}\cong
\begin{cases}
\mathcal{L}_{h-j+\frac{c}{6},j-\frac{c}{3},c} & \text{ if }h=\frac{j}{2},\\
\mathcal{L}_{h-j+\frac{c}{6}-\frac{1}{2},j-\frac{c}{3}+1,c} & \text{ if }h\neq\frac{j}{2}.
\end{cases}
$$
\end{lem}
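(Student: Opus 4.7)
The plan is to exhibit, for $\theta=\pm1$, a nonzero vector of $M^{\theta}:=(\mathcal{L}_{h,j,c})^{\theta}$ annihilated by $(\mf{ns}_{2})_{+}$, and then read off its $(L_{0},J_{0},C)$-eigenvalues. Since the previous lemma shows that ${\sf U}_{\theta}^{*}$ is an auto-equivalence, $M^{\theta}$ is again a simple highest weight module, so producing such a vector together with its weight determines $M^{\theta}$ up to isomorphism.

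Let $v$ denote the highest weight vector of $\mathcal{L}_{h,j,c}$ and take $\theta=1$. Among the generators $L_{n}, J_{n}\ (n\geq 1)$ and $G^{\pm}_{r}\ (r\geq\tfrac{1}{2})$ of $(\mf{ns}_{2})_{+}$, the spectral flow sends $L_{n}\mapsto L_{n}+J_{n}$, $J_{n}\mapsto J_{n}$, $G^{+}_{r}\mapsto G^{+}_{r+1}$ and $G^{-}_{r}\mapsto G^{-}_{r-1}$, so every image still lies in $(\mf{ns}_{2})_{+}$ except possibly $G^{-}_{1/2}\mapsto G^{-}_{-1/2}$. Hence the only thing to test is whether $G^{-}_{-1/2}v$ vanishes in $\mathcal{L}_{h,j,c}$. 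A routine calculation with the (anti)commutation relations shows that $G^{-}_{-1/2}v$ is a singular vector of weight $(h+\tfrac{1}{2},j-1,c)$, and the identity $[G^{+}_{1/2},G^{-}_{-1/2}]v=(2L_{0}+J_{0})v=(2h+j)v$ combined with the simplicity of $\mathcal{L}_{h,j,c}$ forces $G^{-}_{-1/2}v=0$ precisely when $h=-j/2$. In that sub-case $v$ itself is highest weight in $M^{1}$, with eigenvalues $(h+j+\tfrac{c}{6},j+\tfrac{c}{3},c)$ coming from ${\sf U}_{1}(L_{0})=L_{0}+J_{0}+\tfrac{C}{6}$ and ${\sf U}_{1}(J_{0})=J_{0}+\tfrac{C}{3}$. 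Otherwise $v':=G^{-}_{-1/2}v$ is itself annihilated by $(\mf{ns}_{2})_{+}$ in $M^{1}$; the only nontrivial verification is ${\sf U}_{1}(G^{-}_{1/2})\cdot v'=G^{-}_{-1/2}G^{-}_{-1/2}v=0$ by $(G^{-}_{-1/2})^{2}=0$. The resulting $(L_{0},J_{0})$-weight of $v'$ in $M^{1}$ is $(h+j-\tfrac{1}{2}+\tfrac{c}{6},j-1+\tfrac{c}{3})$, yielding the second sub-case.

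The argument for $\theta=-1$ is entirely symmetric: ${\sf U}_{-1}$ sends $G^{+}_{r}\mapsto G^{+}_{r-1}$ and $G^{-}_{r}\mapsto G^{-}_{r+1}$, so the sole obstruction is whether $G^{+}_{-1/2}v=0$. The analogous singular vector argument, using $[G^{+}_{-1/2},G^{-}_{1/2}]v=(2L_{0}-J_{0})v=(2h-j)v$, identifies this condition with $h=j/2$; otherwise $v'':=G^{+}_{-1/2}v$ serves as the highest weight vector of $M^{-1}$, killed by $(\mf{ns}_{2})_{+}$ via $(G^{+}_{-1/2})^{2}=0$. Reading off the new eigenvalues from ${\sf U}_{-1}(L_{0})=L_{0}-J_{0}+\tfrac{C}{6}$ and ${\sf U}_{-1}(J_{0})=J_{0}-\tfrac{C}{3}$ then recovers both cases of the lemma. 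I expect no real obstacle; the only step requiring any care is the bookkeeping of commutators showing that $v'$ and $v''$ are indeed singular in the respective twisted modules, and this is straightforward once the boundary terms $G^{\mp}_{\mp 1/2}$ are handled.
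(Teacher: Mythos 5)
Your argument is correct and is precisely the verification the paper leaves to the reader (the lemma is stated with no proof beyond ``it is easy to verify''): since ${\sf U}_{\pm1}$ maps $(\mf{ns}_{2})_{+}$ into $(\mf{ns}_{2})_{\geq0}$ except for the single boundary mode $G^{\mp}_{\mp1/2}$, everything reduces to deciding whether $G^{\mp}_{\mp1/2}v$ vanishes, and your use of $[G^{+}_{1/2},G^{-}_{-1/2}]=2L_{0}+J_{0}$ (resp. $[G^{+}_{-1/2},G^{-}_{1/2}]=2L_{0}-J_{0}$) together with simplicity settles this exactly at $h=\mp j/2$. The weight bookkeeping via ${\sf U}_{\pm1}(L_{0})=L_{0}\pm J_{0}+\frac{C}{6}$ and ${\sf U}_{\pm1}(J_{0})=J_{0}\pm\frac{C}{3}$ then reproduces all four cases, so the proposal is complete.
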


By the above lemma and some computations, we obtain the following.

\begin{cor}\label{classifi}
Let $1\leq r\leq p-1$, $0\leq s\leq p'-1$, and $\theta\in\mathbb{Z}$.
Then we have
\begin{equation*}
\mathcal{L}(r,s)^{\theta}
\cong 
\begin{cases}
\mathcal{L}_{r,s+1;\lambda_{r,s+1}-\theta}
 & \text{ if }s\neq p'-1 \text{ and } \theta<0\\
\mathcal{L}(p-r,0)
 & \text{ if }s= p'-1 \text{ and } \theta=-p+r\\
\mathcal{L}_{r,s+1;\lambda_{r,s+1}-\theta}
 & \text{ if }s= p'-1 \text{ and } -p+r<\theta<0\\
\mathcal{L}_{r,s;\lambda_{r,s}-\theta}
 & \text{ if }s=0 \text{ and }0\leq\theta<r,\\
\mathcal{L}(p-r,p'-1) 
& \text{ if }s=0 \text{ and }\theta=r,\\
\mathcal{L}_{r,s;\lambda_{r,s}-\theta}
 & \text{ if }s\neq0\text{ and }0\leq\theta,
\end{cases}
\end{equation*}
where $\lambda_{m,n}:=\frac{m-1}{2}-\frac{n}{2a}$
for $m,n\in\mathbb{Z}$.
\end{cor}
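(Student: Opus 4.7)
The plan is to induct on $|\theta|$, iterating the preceding lemma, which gives the image of ${\sf U}_{\pm 1}^{*}$ on a simple highest weight $\mf{ns}_{2}$-module via two affine shifts distinguished by whether $h=\mp j/2$. The base case $\theta=0$ is immediate from the definition of $\mathcal{L}(r,s)=\mathcal{L}_{r,s;\lambda_{r,s}}=\mathcal{L}_{a\lambda_{r,s},2a\lambda_{r,s},c_{p,p'}}$.

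At each inductive step I would write the current module as $\mathcal{L}_{r',s';\lambda'}$ with $h=\Delta_{r',s'}-a(\lambda')^{2}$ and $j=2a\lambda'$, and apply the preceding lemma. Using $c_{p,p'}/6=(1-2a)/2$, $c_{p,p'}/3=1-2a$, and $\Delta_{r,s}=a\lambda_{r,s}(\lambda_{r,s}+1)$, one checks by direct arithmetic that the generic branch of ${\sf U}_{1}^{*}$ (respectively ${\sf U}_{-1}^{*}$) sends $\lambda'\mapsto\lambda'-1$ (respectively $\lambda'\mapsto\lambda'+1$) while keeping $(r',s')$ unchanged. This accounts for the bulk of the orbit and yields the generic formulas $\mathcal{L}_{r,s;\lambda_{r,s}-\theta}$ and $\mathcal{L}_{r,s+1;\lambda_{r,s+1}-\theta}$ stated in the corollary.

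The main obstacle is to pinpoint when the degenerate branch triggers and to verify that it produces exactly the stated jumps. The condition $h=\pm j/2$ translates to a quadratic equation in $\lambda'$ whose only solutions inside the orbit correspond precisely to the transition points $\theta=0$, $\theta=r$ with $s=0$, and $\theta=-p+r$ with $s=p'-1$. At the initial module $\mathcal{L}(r,s)$ one has $h=j/2$, so ${\sf U}_{-1}^{*}$ acts via its degenerate branch and yields $\mathcal{L}_{r,s+1;\lambda_{r,s+1}+1}$. For $s=0$ and positive $\theta$, after $r-1$ generic applications of ${\sf U}_{1}^{*}$ the module becomes $\mathcal{L}_{r,0;-\lambda_{r,0}}$, for which $h=-j/2$; the degenerate branch then produces $(h+j+c/6,j+c/3)$, and one identifies this with $(a\lambda_{p-r,p'-1},2a\lambda_{p-r,p'-1})$ by using the key identity $\lambda_{p-r,p'-1}=-\lambda_{r,0}-1+\frac{1}{2a}$, thereby yielding $\mathcal{L}(p-r,p'-1)$. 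The symmetric transition at $\theta=-p+r$ with $s=p'-1$ is handled by the companion identity $\lambda_{p-r,0}=-\lambda_{r,p'-1}-1+\frac{1}{2a}$, completing the case analysis.
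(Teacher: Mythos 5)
Your proposal is correct and is essentially the paper's own argument: the paper proves Corollary~\ref{classifi} simply "by the above lemma and some computations," i.e.\ by iterating the spectral-flow lemma exactly as you do, and your key identities (the generic branch acting as $\lambda'\mapsto\lambda'\mp1$, the degeneracy locus $\lambda'\in\{-\lambda_{r,s},\lambda_{r,s}+1\}$ forcing $s=0$ resp.\ $s=p'-1$, and $\lambda_{p-r,p'-1}=-\lambda_{r,0}-1+\frac{1}{2a}$ with its companion) all check out. The only cosmetic imprecision is at $\theta=-p+r$, where the arrival at $\mathcal{L}(p-r,0)$ is actually via a generic step followed by the re-identification $\mathcal{L}_{r,p';\lambda_{p-r,0}}\cong\mathcal{L}(p-r,0)$ (using $\Delta_{r,p'}=\Delta_{p-r,0}$), rather than via a degenerate application; the conclusion is unaffected.
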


\section{Proof of the typical character formula}\label{qred}

\subsection{Affine Lie superalgebra $\widehat{\mf{sl}}_{2|1}$}

Let
$\mf{sl}_{2|1}$
be the simple Lie superalgebra of type $A(1,0)$.
We fix a Cartan subalgebra $\overline{\mf{h}}$ of $\mf{sl}_{2|1}$
and take its $\mathbb{C}$-basis $\bigl\{h_{i}\,\big|\,i\in\{1,2\}\bigr\}$ such that
$$
\begin{pmatrix}
(h_{1}|h_{1}) & (h_{1}|h_{2})\\
(h_{2}|h_{1}) & (h_{2}|h_{2})
\end{pmatrix}
=
\begin{pmatrix}
0 & 1\\
1 & 0
\end{pmatrix},$$
where $(?|?)$ stands for the normalized even supersymmetric invariant bilinear form
 on $\mf{sl}_{2|1}$.
Let $\mf{h}=\overline{\mf{h}}\oplus\mathbb{C}K
\oplus\mathbb{C}D$
denote the Cartan subalgebra of 
the Kac--Moody affinization
$\widehat{\mf{sl}}_{2|1}=\mf{sl}_{2|1}\otimes\mathbb{C}[t,t^{-1}]
\oplus\mathbb{C}K\oplus\mathbb{C}D$.
We define $\beta_{i},\Lambda_{0},\delta\in\mf{h}^{*}$ for $i\in\{1,2\}$ by 
$$\beta_{1}(h_{1})=\beta_{2}(h_{2})=0,\ \beta_{1}(h_{2})=\beta_{2}(h_{1})=1,\ 
\beta_{i}(K)=\beta_{i}(D)=0,$$
$$\Lambda_{0}(h_{i})=\Lambda_{0}(D)=\delta(h_{i})=\delta(K)=0,\ 
\Lambda_{0}(K)=\delta(D)=1.$$
The normalized symmetric invariant bilinear form on $\mf{h}^{*}$ is given by
$$\begin{pmatrix}
(\beta_{1}|\beta_{1}) & (\beta_{1}|\beta_{2}) & (\beta_{1}|\delta) & (\beta_{1}|\Lambda_{0})\\
* & (\beta_{2}|\beta_{2}) & (\beta_{2}|\delta) & (\beta_{2}|\Lambda_{0})\\
* & * & (\delta|\delta) & (\delta|\Lambda_{0})\\
* & * & * & (\Lambda_{0}|\Lambda_{0})
\end{pmatrix}
=\begin{pmatrix}
0 & 1 & 0 & 0\\
* & 0 & 0 & 0\\
* & * & 0 & 1\\
* & * & * & 0
\end{pmatrix}.$$

\subsection{Root system}
For convinience, we put
$\alpha_{1}:=\beta_{1}+\beta_{2}$ and $\alpha_{0}:=\delta-\alpha_{1}.$
Then the root set $\Delta$ of $\widehat{\mf{sl}}_{2|1}$ is given by
$$\Delta=
\bigl\{\alpha_{0}+n\delta,\,\alpha_{1}+n\delta,\,\pm\beta_{i}+n\delta,\,m\delta
\,\big|\,n\in\mathbb{Z},\,m\in\mathbb{Z}\setminus\{0\},\,i\in\{1,2\}\bigr\}.$$ 
Throughout this paper, we fix the simple system 
$\Pi:=\{\alpha_{0},\,\beta_{1},\,\beta_{2}\}$
and a corresponding Weyl vector $\rho:=\Lambda_{0}$.

\subsection{Formal character}

We assume that an $\widehat{\mf{sl}}_{2|1}$-module $M$ 
decomposes into finite-dimensional weight spaces
$$M=\bigoplus_{\lambda\in\mf{h}^{*}}M_{\lambda},$$
where $M_{\lambda}:=\{v\in M\,|\,hv=\lambda(h)v\text{ for any }h\in\mf{h}\}.$
Then we define its formal character by
$$\ch(M):=
\sum_{\lambda\in\mf{h}^{*}}
\dim(M_{\lambda})e^{\lambda}.$$


\subsection{Typical $\widehat{\mf{sl}}_{2|1}$-modules}\label{taffinesuper}

In this subsection, we consider the highest weight $\widehat{\mf{sl}}_{2|1}$-modules
of highest weight
\begin{align*}
\Lambda=\Lambda_{r,s;\lambda}
&:=(-1+a)\Lambda_{0}
+a(-\lambda_{r,s}-\lambda)\beta_{1}
+a(-\lambda_{r,s}+\lambda)\beta_{2}
\end{align*}
for $(r,s)\in K_{p,p'}$ and $\lambda\in\mathbb{C}\setminus S_{r,s}$.
Let $L(\Lambda)$ be the irreducible highest weight $\widehat{\mf{sl}}_{2|1}$-module
of highest weight $\Lambda$ and
$W\bigl(L(\Lambda)\bigr)$
the corresponding integral Weyl group defined in 
\cite[\S 7.3.5]{gorelik2015characters}.

\begin{lem}\label{assumption}
Let $y\in W\bigl(L(\Lambda)\bigr)$.
\begin{enumerate}
\item We have $(\Lambda+\rho\,|\,\beta)\neq0$ 
for any $\beta\in\Delta_{\sf iso}:=\{\beta\in\Delta\,|\,(\beta|\beta)=0\}$.
\item The weight $y\circ\Lambda:=y(\Lambda+\rho)-\rho$ coincides with $\Lambda$
if and only if $y={\sf id}$.
\item We have 
$\Lambda-y\circ\Lambda\in\mathbb{Z}_{\geq0}\Pi.$
\end{enumerate}
\end{lem}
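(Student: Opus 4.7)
For (1), I would compute directly: since $\rho=\Lambda_{0}$ and $\Lambda=(-1+a)\Lambda_{0}+a(-\lambda_{r,s}-\lambda)\beta_{1}+a(-\lambda_{r,s}+\lambda)\beta_{2}$, one has $\Lambda+\rho=a\Lambda_{0}+a(-\lambda_{r,s}-\lambda)\beta_{1}+a(-\lambda_{r,s}+\lambda)\beta_{2}$. Pairing with the isotropic roots $\pm\beta_{i}+n\delta$ via the bilinear form table, the four values are $\pm a(\lambda-\lambda_{r,s})+na$ and $\mp a(\lambda+\lambda_{r,s})+na$. These vanish exactly when $\lambda\in\lambda_{r,s}+\mathbb{Z}$ or $\lambda\in-\lambda_{r,s}+\mathbb{Z}$. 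A short arithmetic check using $a=p'/p$ gives $\lambda_{r,s}+\lambda_{p-r,p'-s}=-1$, so $-\lambda_{r,s}+\mathbb{Z}=\lambda_{p-r,p'-s}+\mathbb{Z}$; hence both vanishing sets lie in $S_{r,s}$, and (1) follows from $\lambda\notin S_{r,s}$.

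For (2) and (3) the plan is to pin down the integral root system. The even real roots form the affine $A_{1}^{(1)}$ system $\{\pm\alpha_{1}+n\delta\}$, and $(\Lambda+\rho\,|\,\alpha_{1}+n\delta)=s-a(r-1)+na$ is integral iff $n\equiv r-1\pmod{p}$. Thus $D(\Lambda)_{\sf e}=\{\pm(\alpha_{1}+(pk+r-1)\delta):k\in\mathbb{Z}\}$, and $W(L(\Lambda))$ is the affine Weyl group of type $A_{1}^{(1)}$ with integral simple roots
\[
\widetilde{\alpha}_{1}:=\alpha_{1}+(r-1)\delta,\qquad \widetilde{\alpha}_{0}:=-\alpha_{1}+(p-r+1)\delta,
\]
satisfying $\widetilde{\alpha}_{0}+\widetilde{\alpha}_{1}=p\delta$. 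Using $\delta=\alpha_{0}+\beta_{1}+\beta_{2}$, one rewrites
\[
\widetilde{\alpha}_{1}=(r-1)\alpha_{0}+r\beta_{1}+r\beta_{2},\qquad \widetilde{\alpha}_{0}=(p-r+1)\alpha_{0}+(p-r)\beta_{1}+(p-r)\beta_{2},
\]
both lying in $\mathbb{Z}_{\geq 0}\Pi$ since $1\leq r\leq p-1$.

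Now I would evaluate the integral coroot pairings $(\Lambda+\rho\,|\,\widetilde{\alpha}_{1}^{\vee})=s$ and $(\Lambda+\rho\,|\,\widetilde{\alpha}_{0}^{\vee})=p'-s$. The condition $(r,s)\in K_{p,p'}$ forces $1\leq s\leq p'-1$, so both values are strictly positive integers and $\Lambda+\rho$ is regular dominant integral at level $p'$ with respect to the integral subsystem. By the standard theory of the dot action of the affine Weyl group on regular dominant integral weights, the stabilizer of $\Lambda$ under $\circ$ in $W(L(\Lambda))$ is trivial, giving (2); and for every $y\in W(L(\Lambda))$,
\[
\Lambda-y\circ\Lambda\in\mathbb{Z}_{\geq 0}\widetilde{\alpha}_{0}+\mathbb{Z}_{\geq 0}\widetilde{\alpha}_{1}\subseteq\mathbb{Z}_{\geq 0}\Pi,
\]
proving (3).

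The only delicate step is correctly identifying the integral root data and verifying that $s$ and $p'-s$ are both strictly positive under the $K_{p,p'}$ hypothesis; once that regularity is secured, parts (2) and (3) reduce to the classical affine $A_{1}^{(1)}$ Weyl group facts, and (1) is a direct computation against the definition of $S_{r,s}$.
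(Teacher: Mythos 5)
Your proposal is correct, and for parts (2) and (3) it takes a genuinely different route from the paper. The paper determines $W\bigl(L(\Lambda)\bigr)$ explicitly as the list of elements $x_{2p\ell-2r+1}$, $x_{2p\ell}$ and computes $\Lambda-x\circ\Lambda$ in closed form (formulas (\ref{weyl1}) and (\ref{weyl2})); parts (2) and (3) are then read off from these expressions, and the same formulas are reused in the proof of Theorem \ref{TCF} to write down the character sum, so the explicit computation is not wasted effort there. You instead identify the integral simple system $\{\widetilde{\alpha}_{0},\widetilde{\alpha}_{1}\}$ with $\widetilde{\alpha}_{0}+\widetilde{\alpha}_{1}=p\delta$, check $(\Lambda+\rho\,|\,\widetilde{\alpha}_{1}^{\vee})=s$ and $(\Lambda+\rho\,|\,\widetilde{\alpha}_{0}^{\vee})=p'-s$, and observe that $(r,s)\in K_{p,p'}$ forces both to be strictly positive integers, so that $\Lambda+\rho$ is regular dominant integral of positive level for the $A_{1}^{(1)}$ subsystem; (2) and (3) then follow from the standard facts that a regular dominant weight of positive level has trivial stabilizer in the affine Weyl group and that $\mu-w\mu\in\mathbb{Z}_{\geq0}\{\widetilde{\alpha}_{0},\widetilde{\alpha}_{1}\}$ for $\mu$ dominant integral, combined with your (correct) verification that $\widetilde{\alpha}_{0},\widetilde{\alpha}_{1}\in\mathbb{Z}_{\geq0}\Pi$. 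This is cleaner and more conceptual for the lemma itself, at the cost of not producing the explicit orbit data needed later. Your part (1) matches the paper's argument, including the useful identity $\lambda_{r,s}+\lambda_{p-r,p'-s}=-1$; the only omission is that $\Delta_{\sf iso}$ also contains the roots $m\delta$ with $m\neq0$, for which $(\Lambda+\rho\,|\,m\delta)=m\frac{p'}{p}\neq0$ is immediate — add that one line and the proof is complete.
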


\begin{proof}
First we prove (1).
Since $\Delta_{\sf iso}=\{\pm\beta_{i}+n\delta,\,m\delta\}\subset\Delta$,
we may assume that $\beta=\beta_{i}+n\delta$ or $\beta=m\delta$.
Since we have
$$\Lambda+\rho=a\bigl(\Lambda_{0}
+(-\lambda_{r,s}-\lambda)\beta_{1}
+(-\lambda_{r,s}+\lambda)\beta_{2}\bigr)$$ and
$-\lambda_{r,s}\pm\lambda\notin\mathbb{Z}$,
we conclude $(\Lambda+\rho\,|\,\beta)\neq0$ in each case.

Next we prove (2).
Let $\Delta\bigl(L(\Lambda)\bigr)$
be the (non-isotoropic) integral root system associated with $L(\Lambda)$
(see \cite[\S7.2.1, 7.3.5]{gorelik2015characters} for the definition).
Then, by direct calculation, we can verify that
$$\Delta\bigl(L(\Lambda)\bigr)=\{\alpha_{0}+(p\ell-r)\delta,
\ \alpha_{1}+(p\ell+r-1)\delta\,|\,\ell\in\mathbb{Z}\}$$
and the corresponding reflections are given by
$$W\bigl(L(\Lambda)\bigr)=\{x_{2p\ell-2r+1},\,x_{2p\ell}\,|\,\ell\in\mathbb{Z}\},$$
where $x_{0}:={\sf id}$ and $x_{k}$ for $k\neq0$ 
is defined as the composition of the following $|k|$ reflections:
$$x_{k}:=\begin{cases}
\cdots r_{\alpha_{0}}r_{\alpha_{1}}r_{\alpha_{0}} &
\text{ if }k>0,\\
\cdots r_{\alpha_{1}}r_{\alpha_{0}}r_{\alpha_{1}} &
\text{ if }k<0.
\end{cases}
$$
By direct computations, we obtain 
\begin{align}\label{weyl1}
\Lambda-x_{2p\ell-2r+1}\circ\Lambda
&=(p\ell-r)(p'\ell-s)\delta+(p'\ell-s)\alpha_{0}
\end{align}
and
\begin{align}\label{weyl2}
\Lambda-x_{2p\ell}\circ\Lambda
&=(pp'\ell+rp'-sp)\ell\delta-p'\ell\alpha_{0}
\end{align}
for any $\ell\in\mathbb{Z}$.
This proves (2).

Finially, (3) follows from the above formulae and the assumption $(r,s)\in K_{p,p'}$.
\end{proof}

\begin{theom}\label{TCF}
By setting $q=e^{-\delta}$, $z_{i}=e^{\beta_{i}}$, and $y=e^{\alpha_{0}}$,
we obtain
\begin{align*}
\widetilde{R}\ch\bigl(L(\Lambda)\bigr)
&=e^{\Lambda}\sum_{\ell\in\mathbb{Z}}\left\{
q^{(pp'\ell+rp'-sp)\ell}
y^{p'\ell}
-q^{(p\ell-r)(p'\ell-s)}
y^{-p'\ell+s}
\right\},
\end{align*}
where
$$\widetilde{R}:=\prod_{n=1}^{\infty}\frac{(1-q^{n})^{2}(1-q^{n}z_{1}z_{2})(1-q^{n-1}z_{1}^{-1}z_{2}^{-1})}
{(1+q^{n}z_{1})(1+q^{n-1}z_{1}^{-1})(1+q^{n}z_{2})(1+q^{n-1}z_{2}^{-1})}
$$
is the $\widehat{\mf{sl}}_{2|1}$-denominator in the Ramond sector.
\end{theom}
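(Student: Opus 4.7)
The plan is to invoke the Gorelik character formula for typical highest weight modules over affine Lie superalgebras from \cite{gorelik2015characters}. Its three hypotheses---the genuine typicality $(\Lambda+\rho\,|\,\beta)\neq 0$ on every isotropic root $\beta$, triviality of the stabilizer of $\Lambda$ under the dot action of $W(L(\Lambda))$, and the positivity $\Lambda - y\circ\Lambda \in \mathbb{Z}_{\geq 0}\Pi$ for every $y\in W(L(\Lambda))$---are exactly the three assertions of Lemma \ref{assumption}. Applying the formula yields
\begin{equation*}
R\cdot \ch\bigl(L(\Lambda)\bigr) = \sum_{y\in W(L(\Lambda))}(-1)^{\ell(y)}\, e^{y\circ\Lambda},
\end{equation*}
where $R$ denotes the affine super-denominator of $\widehat{\mf{sl}}_{2|1}$ with respect to the simple system $\Pi$ and $\ell(\cdot)$ is the length in $W(L(\Lambda))$.

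Next I would substitute the parametrization $W(L(\Lambda))=\{x_{2p\ell-2r+1},\,x_{2p\ell}\mid\ell\in\mathbb{Z}\}$ together with the two evaluations (\ref{weyl1}) and (\ref{weyl2}) of $\Lambda - y\circ\Lambda$, both already derived inside the proof of Lemma \ref{assumption}. The alternating-product definition of $x_k$ recorded there shows that $\ell(x_{2p\ell-2r+1})$ is odd and $\ell(x_{2p\ell})$ is even; combined with the identifications $q=e^{-\delta}$, $z_i=e^{\beta_i}$, $y=e^{\alpha_0}$ and the extraction of the common factor $e^{\Lambda}$, the alternating sum reorganizes into precisely the $\ell$-indexed expression on the right-hand side of the theorem.

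It then remains to identify $R$ with the product $\widetilde{R}$ displayed in the statement. Expanding the super-denominator over the even positive roots $n\delta,\ \pm\alpha_{1}+n\delta$ and the odd positive roots $\pm\beta_{i}+n\delta$ produces the numerator $\prod_{n\geq 1}(1-q^{n})^{2}(1-q^{n}z_{1}z_{2})(1-q^{n-1}z_{1}^{-1}z_{2}^{-1})$ directly; the Ramond-sector denominator $\prod_{n\geq 1}(1+q^{n}z_{i})(1+q^{n-1}z_{i}^{-1})$ arises from the NS version by a translation along an element of the $\beta_{1}+\beta_{2}$-lattice that flips $z_{i}\mapsto -z_{i}$ up to an overall scalar. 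The main obstacle I anticipate is precisely this NS-to-Ramond conversion: carefully tracking the multiplicative factor it produces so that it is absorbed cleanly into $e^{\Lambda}$ without inducing an extra shift on the right-hand side, while simultaneously confirming the length parities $(-1)^{\ell(x_{k})}$ from the alternating-product presentation of $x_{k}$.
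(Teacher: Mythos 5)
Your proposal follows the paper's proof exactly: Lemma \ref{assumption} supplies the hypotheses of Gorelik's character formula, and the sum on the right-hand side is then read off from the parametrization $W\bigl(L(\Lambda)\bigr)=\{x_{2p\ell-2r+1},x_{2p\ell}\}$ together with (\ref{weyl1}) and (\ref{weyl2}), with signs $(-1)^{k}$ for $x_{k}$ as you say. The only divergence is your anticipated ``NS-to-Ramond'' obstacle, which does not arise: $\widetilde{R}$ is already the super-denominator attached to the fixed simple system $\Pi=\{\alpha_{0},\beta_{1},\beta_{2}\}$ (expanding over the positive even roots $n\delta$, $\pm\alpha_{1}+n\delta$ and the positive odd roots $\pm\beta_{i}+n\delta$ yields the displayed product verbatim), so no sector conversion or extra scalar needs to be tracked.
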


\begin{proof}
Since $\Lambda$ is non-critical ({\it i.e.,} $\Lambda(K)\neq-1$),
all the assumptions for \cite[Theorem 11.1.1]{gorelik2015characters}
are clarified by Lemma \ref{assumption}.
Therefore we obtain
$$\widetilde{R}\ch\bigl(L(\Lambda)\bigr)
=\sum_{y\in W(L(\Lambda))}{\sf sgn}(y)\ch\bigl(M(y\circ\Lambda)\bigr).$$
Then the required formula follows from (\ref{weyl1}) and (\ref{weyl2}).
\end{proof}

\subsection{Quantum BRST reduction}

Let ${\sf H}^{0}(?)$ denote the $0$-th BRST cohomology functor 
with respect to the principal nilpotent element $f\in\mf{sl}_{2|1}$
and the semisimple element $x:=\frac{1}{2}(h_{1}+h_{2})\in\overline{\mf{h}}$.
See \cite[\S 3]{arakawa2005representation} for the details.

\begin{lem}
The $\mf{ns}_{2}$-module 
${\sf H}^{0}\bigl(L(\Lambda_{r,s;\lambda})\bigr)$
is isomorphic to 
$\mathcal{L}_{r,s;\lambda}.$
\end{lem}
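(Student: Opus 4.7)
The plan is to combine the non-degeneracy results already assembled in Lemma \ref{assumption} with the general formalism of BRST reduction (a super-version of Arakawa's theorem from \cite{arakawa2005representation}) and then pin down the isomorphism class by a character computation. The overall structure has four steps.

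First, I would compute the action of $\mathsf{H}^{0}$ on the Verma module $M(\Lambda_{r,s;\lambda})$. By the definition of the $+$-reduction associated with the principal nilpotent $f\in\mathfrak{sl}_{2|1}$ and $x=\tfrac{1}{2}(h_{1}+h_{2})$, one knows in general that $\mathsf{H}^{0}\bigl(M(\Lambda)\bigr)$ is a Verma module $\mathcal{M}_{h_{\Lambda},\,j_{\Lambda},\,c}$ over $\mathfrak{ns}_{2}$ whose weights are read off from $(\Lambda\mid x)$, $(\Lambda\mid h_{1}-h_{2})$, and $\Lambda(K)$. A direct substitution, using $a=p'/p$ and $c_{p,p'}=-3(2k+1)$ with $k=-1+a$, yields
\begin{equation*}
(h_{\Lambda},\,j_{\Lambda},\,c)=\bigl(\Delta_{r,s}-a\lambda^{2},\ 2a\lambda,\ c_{p,p'}\bigr),
\end{equation*}
which matches the triple defining $\mathcal{L}_{r,s;\lambda}$. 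By the right exactness of $\mathsf{H}^{0}$ on Category $\mathcal{O}$, applied to $M(\Lambda_{r,s;\lambda})\twoheadrightarrow L(\Lambda_{r,s;\lambda})$, the reduction $\mathsf{H}^{0}\bigl(L(\Lambda_{r,s;\lambda})\bigr)$ is realized as a highest weight quotient of $\mathcal{M}_{\Delta_{r,s}-a\lambda^{2},\,2a\lambda,\,c_{p,p'}}$.

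Second, I would invoke the super analog of the vanishing theorem of \cite{arakawa2005representation}: under the hypothesis $(\Lambda+\rho\mid\beta)\neq 0$ for every isotropic $\beta\in\Delta_{\sf iso}$, one has $\mathsf{H}^{i}\bigl(L(\Lambda)\bigr)=0$ for all $i\neq 0$. This hypothesis is furnished by Lemma \ref{assumption}(1), so the Euler--Poincar\'e principle collapses to the single identity
\begin{equation*}
\ch\,\mathsf{H}^{0}\bigl(L(\Lambda_{r,s;\lambda})\bigr)
= \Bigl[\ch\,L(\Lambda_{r,s;\lambda})\Bigr]_{\mathrm{red}},
\end{equation*}
where $[-]_{\mathrm{red}}$ denotes the canonical specialization of formal $\widehat{\mathfrak{sl}}_{2|1}$-characters to formal $\mathfrak{ns}_{2}$-characters induced by the reduction.

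Third, I would compute the right-hand side. Using Theorem \ref{TCF}, the character $\ch\,L(\Lambda_{r,s;\lambda})$ is presented as a theta-like alternating sum divided by the Ramond-sector denominator $\widetilde{R}$. The specialization sends $\widetilde{R}$ to (a normalization factor times) the $\mathfrak{ns}_{2}$-denominator $\vartheta_{0,0}(u;\tau)/\eta(\tau)^{2}$, while the numerator reduces term by term to $\chi_{r,s}(\tau)$ up to the prefactor $q^{-a\lambda^{2}}z^{2a\lambda}w^{3(1-2a)}$. Comparing with Theorem \ref{typical}, the resulting expression is exactly $\ch^{0,0}(\mathcal{L}_{r,s;\lambda})$. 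Since $\mathsf{H}^{0}\bigl(L(\Lambda_{r,s;\lambda})\bigr)$ is a highest weight quotient of $\mathcal{M}_{\Delta_{r,s}-a\lambda^{2},\,2a\lambda,\,c_{p,p'}}$ whose character coincides with that of the simple quotient $\mathcal{L}_{r,s;\lambda}$, the two modules must agree.

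The main obstacle I expect is the verification of the vanishing statement $\mathsf{H}^{i}\bigl(L(\Lambda_{r,s;\lambda})\bigr)=0$ for $i\neq 0$ in the \emph{super} setting: Arakawa's original argument uses a BGG-type resolution controlled by the integral Weyl group, and its transfer to $\widehat{\mathfrak{sl}}_{2|1}$ requires precisely the non-degeneracy in Lemma \ref{assumption}(1) (to avoid the complications caused by isotropic roots) together with the explicit description of $W\bigl(L(\Lambda)\bigr)$ via the reflections $x_{2p\ell-2r+1}$ and $x_{2p\ell}$ already obtained in the proof of Lemma \ref{assumption}(2). Once this technical ingredient is in place, the remaining steps are essentially formal.
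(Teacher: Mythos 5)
Your proposal takes a genuinely different route from the paper, but as written it does not close. The paper's own proof is a two-line application of Arakawa's irreducibility criterion: by \cite[Theorem 6.7.4]{arakawa2005representation}, ${\sf H}^{0}\bigl(L(\Lambda)\bigr)$ is an \emph{irreducible} highest weight $\mf{ns}_{2}$-module whenever $\Lambda(K-h_{1}-h_{2})\notin\mathbb{Z}_{\geq0}$, and here $\Lambda_{r,s;\lambda}(K-h_{1}-h_{2})=ar-s-1$ is not even an integer since $a=p'/p$ with $1\leq r\leq p-1$ and $(p,p')$ coprime. After that only the highest weight $(h,j,c)$ needs to be computed. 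Your proposal never verifies this condition, and the hypothesis you do invoke --- the isotropic non-degeneracy of Lemma \ref{assumption}(1) --- is not the one that governs the reduction: in Arakawa's framework the vanishing of ${\sf H}^{i}$ for $i\neq0$ holds for all modules in category $\mathcal{O}$ with no genericity assumption, whereas Lemma \ref{assumption} is used in the paper only to validate the Gorelik--Kac character formula for $L(\Lambda)$ itself (Theorem \ref{TCF}). What actually controls irreducibility and non-vanishing of ${\sf H}^{0}\bigl(L(\Lambda)\bigr)$ is the pairing $ar-s-1$ above.

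The more serious gap is circularity in your final step. To upgrade ``highest weight quotient of $\mathcal{M}_{\Delta_{r,s}-a\lambda^{2},2a\lambda,c_{p,p'}}$ with the right character'' to ``simple quotient,'' you compare the Euler--Poincar\'e character of ${\sf H}^{0}\bigl(L(\Lambda)\bigr)$ with $\ch^{0,0}(\mathcal{L}_{r,s;\lambda})$ \emph{via Theorem \ref{typical}}. But Theorem \ref{typical} is deduced from Theorem \ref{reduction} together with the present lemma: the identification ${\sf H}^{0}\bigl(L(\Lambda_{r,s;\lambda})\bigr)\cong\mathcal{L}_{r,s;\lambda}$ is exactly what lets the reduced $\widehat{\mf{sl}}_{2|1}$-character be read as the character of $\mathcal{L}_{r,s;\lambda}$. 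So the character of the simple $\mf{ns}_{2}$-module is not independently available at this stage, and your argument assumes what it sets out to prove. The strategy could be repaired by supplying an independent determination of $\ch\,\mathcal{L}_{r,s;\lambda}$ for typical weights (e.g.\ from the $\mathcal{N}=2$ determinant formula and embedding diagrams), but the cleanest fix is simply to check $ar-s-1\notin\mathbb{Z}_{\geq0}$ and quote the irreducibility statement, as the paper does.
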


\begin{proof}
Since $\Lambda_{r,s;\lambda}(K-h_{1}-h_{2})
=ar-s-1\notin\mathbb{Z}_{\geq0}$,
the statement in \cite[Theorem 6.7.4]{arakawa2005representation} ensures that
${\sf H}^{0}\bigl(L(\Lambda_{r,s;\lambda})\bigr)$ is an irreducible highest 
weight $\mf{ns}_{2}$-module.
The corresponding highest weight 
is given by
$$(h,j,c)=\bigl(h_{\Lambda}-\Lambda(x+D),\,
\Lambda(h_{1}-h_{2}),\,-3(2\Lambda(K)+1)\bigr),$$
where $h_{\Lambda}=a(\lambda_{r,s}^{2}-\lambda^{2})$ 
is the lowest conformal weight of $L(\Lambda)$
(see \cite[(9.1--3)]{kac2014representations} for the details).
By some computations, the highest weight coincides with
$$(h,j,c)=\bigl(\Delta_{r,s}-a\lambda^{2},2a\lambda,3(1-2a)\bigr).$$
\end{proof}

Since the next character formula is a special case of \cite[Theorem 3.1]{kac2003quantum}
(see also \cite[(9.4)]{kac2014representations}), we omit the proof.

\begin{theom}\label{reduction}
We have
$$R\ch^{0,0}(\mathcal{L}_{r,s;\lambda})
=q^{h_{\Lambda}-\frac{1-2a}{8}}w^{3(1-2a)}
\widetilde{R}\ch\bigl(L(\Lambda)\bigr)
\Big|_{e^{\Lambda_{0}}=1,z_{1}=z^{-1}q^{-\frac{1}{2}},z_{2}=zq^{-\frac{1}{2}}},$$
where
$$R:=
\prod_{n=1}^{\infty}\frac
{(1-q^{n})^{2}}
{(1+zq^{n-\frac{1}{2}})
(1+z^{-1}q^{n-\frac{1}{2}})}
$$
is the $\widehat{\mf{gl}}_{1|1}$-denominator
in the Neveu--Schwarz sector.
\end{theom}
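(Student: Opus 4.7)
The plan is to apply the general character formula for the quantum Hamiltonian reduction in \cite[Theorem 3.1]{kac2003quantum} to $L(\Lambda_{r,s;\lambda})$ and translate the resulting identity into the notation of \S 2.

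By the preceding lemma together with the vanishing theorem of \cite{arakawa2005representation}, whose hypothesis is guaranteed by Lemma \ref{assumption}(1), we have ${\sf H}^{i}(L(\Lambda)) = 0$ for $i \neq 0$ and ${\sf H}^{0}(L(\Lambda)) \cong \mathcal{L}_{r,s;\lambda}$. Consequently, the Euler--Poincar\'e character of the BRST complex equals $\ch^{0,0}(\mathcal{L}_{r,s;\lambda})$, and the Kac--Roan--Wakimoto formula computes this character directly as $\ch(L(\Lambda))$ multiplied by the supercharacter of the ghost system and evaluated on the residual Cartan after twisting by the grading element $x = \frac{1}{2}(h_{1} + h_{2})$.

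The remaining work is to match this general output to the present normalizations. The ghost partition function simplifies to the ratio $\widetilde{R}/R$ of the denominators appearing in Theorem \ref{TCF} and in the statement above. The substitution $e^{\Lambda_{0}} = 1$ reflects that $K$ acts by the fixed scalar $k = -1+a$ and is absorbed into the normalization, while $z_{1} = z^{-1}q^{-1/2}$ and $z_{2} = zq^{-1/2}$ encode both the half-integer shift in the $L_{0}$-grading induced by twisting by $x$ (the $q^{\pm 1/2}$ factors, reflecting $\beta_{i}(x) = \tfrac{1}{2}$) and the identification of the $\mf{ns}_{2}$-Cartan element $J_{0}$ with $\tfrac{1}{2}(h_{2} - h_{1}) \in \overline{\mf h}$. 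The scalar prefactor $q^{h_{\Lambda} - (1-2a)/8} w^{3(1-2a)}$ is verified by tracking $L_{0}$- and $C$-eigenvalues: $h_{\Lambda} = a(\lambda_{r,s}^{2} - \lambda^{2})$ is the lowest $\widehat{\mf{sl}}_{2|1}$ conformal weight carried through the reduction, the correction $-(1-2a)/8 = -c_{p,p'}/24$ comes from the normalization built into $\ch^{0,0}$, and $w^{3(1-2a)} = w^{c_{p,p'}}$ records the action of $C$.

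The main obstacle is pure bookkeeping rather than any new idea. One must carefully align the (super)character conventions of \cite{kac2003quantum} with the unsigned $\ch^{0,0}$ introduced in \S 2, track the Ramond-versus-Neveu--Schwarz conventions that distinguish $\widetilde{R}$ from $R$, and verify that the apparent zero of $\widetilde{R}$ at $z_{1}z_{2} = q^{-1}$ (coming from the factor $(1 - q^{n-1})$ at $n=1$) is cancelled by a corresponding pole of $\ch(L(\Lambda))$, so that the RHS is a well-defined formal series.
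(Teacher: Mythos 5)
Your proposal is correct and follows essentially the same route as the paper, which simply declares Theorem \ref{reduction} to be a special case of \cite[Theorem 3.1]{kac2003quantum} (see also \cite[(9.4)]{kac2014representations}) and omits the proof; your invocation of Arakawa's vanishing theorem, the Euler--Poincar\'e argument for the BRST complex, and the subsequent bookkeeping of specializations and prefactors is exactly the intended justification. Two small corrections: the hypothesis ensuring ${\sf H}^{0}\bigl(L(\Lambda)\bigr)\cong\mathcal{L}_{r,s;\lambda}$ is $\Lambda_{r,s;\lambda}(K-h_{1}-h_{2})=ar-s-1\notin\mathbb{Z}_{\geq0}$ from the lemma preceding the theorem rather than Lemma \ref{assumption}(1), and the well-definedness of the right-hand side is immediate because the product $\widetilde{R}\ch\bigl(L(\Lambda)\bigr)$ is specialized as a whole via the closed form of Theorem \ref{TCF} (under which $e^{\alpha_{0}}\mapsto1$), so no cancellation of a zero of $\widetilde{R}$ against a pole of $\ch\bigl(L(\Lambda)\bigr)$ needs to be checked.
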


\section{Proof of the atypical modular transformation law}\label{proofofamt}

In this section, we compute the modular ($S$-)transformation of the 
atypical character function
step by step.

\subsection{Expression in terms of Appell--Lerch sums}

For a positive integer $K$, the level $K$ Appell--Lerch sum is defined as
$$A_{K}(u,v;\tau):=z^{\frac{K}{2}}\sum_{n\in\mathbb{Z}}\frac{(-1)^{Kn}y^{n}
q^{\frac{K}{2}n(n+1)}}{1-zq^{n}}.$$
See \cite{semikhatov2005higher} and \cite{alfes2014mock} 
for the fundamental properties of this function.

We obtain the following by Definition \ref{Adefinition}.
  
\begin{lem}\label{itoAL}
We have
\begin{align*}
&\frac{\eta(\tau)^{3}}{\vartheta_{\varepsilon,\varepsilon'}(u;\tau)}
\mathbf{A}^{\varepsilon,\varepsilon'}_{r,s;\theta}(\tau,u,t)\\
&=(-1)^{\varepsilon\varepsilon'+(1-\varepsilon')p'}
q^{(ar-s-p')(\theta+\frac{1-\varepsilon}{2})-a(\theta+\frac{1-\varepsilon}{2})^{2}}
z^{(ar-s-p')-2a(\theta+\frac{1-\varepsilon}{2})}\\
&\ \ \ \times\Big(
A_{2p'}\bigl(u_{\varepsilon,\varepsilon'}+\theta\tau,
(rp'-sp-pp')\tau;p\tau\bigr)\\
&\ \ \ \ \ \ \ \ -q^{r(s+p')}A_{2p'}\bigl(u_{\varepsilon,\varepsilon'}+(\theta-r)\tau,
(-rp'-sp-pp')\tau;
p\tau\bigr)\Big).
\end{align*}
\end{lem}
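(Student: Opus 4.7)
The plan is to unfold Definition \ref{Adefinition} and then recognize each of the two sums defining $\Psi_{p,p';r,s}$ as a level-$2p'$ Appell--Lerch sum with modular parameter $p\tau$, after which the identity reduces to bookkeeping of prefactors. Explicitly, the ratio $\frac{\eta(\tau)^{3}}{\vartheta_{\varepsilon,\varepsilon'}(u;\tau)}\mathbf{A}^{\varepsilon,\varepsilon'}_{r,s;\theta}(\tau,u,t)$ equals
$$(-1)^{\varepsilon\varepsilon'}q^{-a(\lambda_{r,s}-\theta+\frac{\varepsilon}{2})^{2}}z^{2a(\lambda_{r,s}-\theta+\frac{\varepsilon}{2})}w^{3(1-2a)}\,\Psi_{p,p';r,s}(v;\tau)$$
with $v:=u_{\varepsilon,\varepsilon'}+\theta\tau$, so that $\zeta:=e^{2\pi iv}=(-1)^{1-\varepsilon'}zq^{\theta+\frac{1-\varepsilon}{2}}$; the sign $(-1)^{1-\varepsilon'}$ inside $\zeta$ is the source of the $(-1)^{(1-\varepsilon')p'}$ that must appear on the right once the factor $\zeta^{-p'}$ is pulled out of the Appell--Lerch representation.

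Next, for each $N\in\{\,rp'-sp,\,-rp'-sp\,\}$, I would complete the square
$$pp'\Bigl(n+\tfrac{N}{2pp'}\Bigr)^{2}=pp'n(n+1)+(N-pp')n+\tfrac{N^{2}}{4pp'}$$
inside the corresponding summand of $\Psi_{p,p';r,s}(v;\tau)$. The first sum then becomes $\zeta^{-p'}q^{\frac{(rp'-sp)^{2}}{4pp'}}A_{2p'}\bigl(v,(rp'-sp-pp')\tau;p\tau\bigr)$ directly from the definition of $A_{K}$. For the second sum, rewriting $1-\zeta q^{pn-r}=1-(\zeta q^{-r})q^{pn}$ signals the shift $v\mapsto v-r\tau$, and the same square-completion yields $\zeta^{-p'}q^{rp'+\frac{(rp'+sp)^{2}}{4pp'}}A_{2p'}\bigl(v-r\tau,(-rp'-sp-pp')\tau;p\tau\bigr)$. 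The relative $q$-exponent between the two Appell--Lerch pieces simplifies via $rp'+\tfrac{(rp'+sp)^{2}-(rp'-sp)^{2}}{4pp'}=rp'+rs=r(s+p')$, producing exactly the $q^{r(s+p')}$ weight attached to the second term in the claim.

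Finally I would collect the scalar prefactors and match them to the right-hand side. Writing $\mu:=\theta+\tfrac{1-\varepsilon}{2}$, one has $\lambda_{r,s}-\theta+\tfrac{\varepsilon}{2}=\lambda_{r,s}+\tfrac{1}{2}-\mu$, and the identity $2a\lambda_{r,s}=a(r-1)-s$ gives the clean cancellation
$$a\bigl(\lambda_{r,s}+\tfrac{1}{2}\bigr)^{2}=\tfrac{(ar-s)^{2}}{4a}=\tfrac{(rp'-sp)^{2}}{4pp'},$$
which kills the $\mu$-independent $q$-contribution from the square-completion against the expanded prefactor $q^{-a(\lambda_{r,s}+\frac{1}{2})^{2}+2a(\lambda_{r,s}+\frac{1}{2})\mu-a\mu^{2}}$. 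The surviving linear-in-$\mu$ piece assembles as $\bigl(2a(\lambda_{r,s}+\tfrac{1}{2})-p'\bigr)\mu=(ar-s-p')\mu$, and the $z$-exponent collapses analogously to $(ar-s-p')-2a\mu$ once one multiplies by $\zeta^{-p'}$. There is no conceptual difficulty: the main obstacle is purely keeping the three independent sources of $q$-prefactors (Definition \ref{Adefinition}, the square-completion, and $\zeta^{-p'}$) aligned together with the sign $(-1)^{(1-\varepsilon')p'}$, the only genuinely nontrivial cancellation being the identity $a(\lambda_{r,s}+\tfrac{1}{2})^{2}=\tfrac{(rp'-sp)^{2}}{4pp'}$ that removes the $\mu$-independent $q$-exponent entirely.
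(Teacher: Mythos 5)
Your proposal is correct and is precisely the computation the paper leaves implicit (the paper's entire ``proof'' is the remark that the lemma follows from Definition \ref{Adefinition}): unfolding the definition, completing the square $pp'(n+\tfrac{N}{2pp'})^{2}=pp'n(n+1)+(N-pp')n+\tfrac{N^{2}}{4pp'}$ to recognize the two summands of $\Psi_{p,p';r,s}$ as level-$2p'$ Appell--Lerch sums in $p\tau$, extracting $\zeta^{-p'}$ with $\zeta=(-1)^{1-\varepsilon'}zq^{\theta+\frac{1-\varepsilon}{2}}$ to produce the sign $(-1)^{(1-\varepsilon')p'}$ and the relative weight $q^{r(s+p')}$, and cancelling the constant term via $a(\lambda_{r,s}+\tfrac12)^{2}=\tfrac{(rp'-sp)^{2}}{4pp'}$ --- all of which I have checked. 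The only loose end is the factor $w^{3(1-2a)}$, which your unfolding correctly retains but which is absent from the right-hand side as printed in the lemma; this is an inconsistency in the paper's statement rather than in your argument.
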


\subsection{Modular transformation law of Appell--Lerch sum}

The following modular $S$-transformation property 
of the Appell--Lerch sum is a spcecial case of
\cite[Corollary 3.4]{alfes2014mock}.

\begin{prp}\label{SofAppell}
\begin{align*}
A_{2p'}\left(\frac{u}{\tau},\frac{v}{\tau};-\frac{p}{\tau}\right)
=&\ \frac{\tau}{p}e^{-2\pi i(p'u^{2}-uv)/p\tau}
\Bigg(A_{2p'}\Bigl(\frac{u}{p},\frac{v}{p};\frac{\tau}{p}\Bigr)\\
&+\frac{e^{2\pi i\left(a-\frac{1}{2p}\right)u}}{4p'}
\sum_{m=0}^{2p'-1}\vartheta_{1,1}\Bigl(f_{m}(\tau,v);\frac{\tau}{2pp'}\Bigr)
h\Bigl(\frac{u}{p}-f_{m}(\tau,v);\frac{\tau}{2pp'}\Bigr)
\Bigg),
\end{align*}
where
$f_{m}(\tau,v):=\frac{(2p'-1)\tau+2v-2pm}{4pp'}$
and
$h(u;\tau):=\int_{\mathbb{R}}
\frac{e^{\pi i\tau x^{2}-2\pi ux}}{\cosh\pi x}dx$
is the Mordell integral.
\end{prp}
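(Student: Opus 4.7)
The plan is to verify the formula by specialization of Corollary 3.4 of \cite{alfes2014mock}, which establishes the modular $S$-transformation of higher-level Appell--Lerch sums in full generality. The first task is to match our $A_{2p'}(u,v;\tau)$, with its $z^{K/2}$ prefactor and sign convention, to the level $2p'$ Appell function treated there; the second task is to convert their $\tau\mapsto -1/\tau$ statement into the $\tau\mapsto -p/\tau$ we need by rescaling $\tau' := \tau/p$.

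The underlying proof one would actually execute has three stages. First, introduce the real-analytic completion $\widehat{A}_{2p'}(u,v;\tau)$ of Zwegers--Semikhatov--Taormina--Tipunin type, obtained from $A_{2p'}$ by adding a non-holomorphic ``Eichler-integral'' term built from a unary theta series; this completion transforms as a Jacobi form of weight $1$ and appropriate (rank-$2$) lattice index under $SL_2(\mathbb{Z})$, with the usual multiplier system. Next, apply the standard $S$-transformation to $\widehat{A}_{2p'}$ in the form $(u,v;\tau)\mapsto(u/\tau, v/\tau; -p/\tau)$, handling the factor of $p$ in the modulus by the change of base variable $\tau\mapsto\tau/p$ together with the elliptic rescaling $u\mapsto u/p$, $v\mapsto v/p$; this produces the prefactor $\tfrac{\tau}{p}\,e^{-2\pi i(p'u^{2}-uv)/(p\tau)}$ from the combined weight and index multipliers. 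Finally, subtract the non-holomorphic completion on both sides to recover the holomorphic $A_{2p'}$. The mismatch between the completion at modulus $-p/\tau$ and at modulus $\tau/p$ is precisely the Mordell-integral correction on the right-hand side.

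The sum over $m\in\{0,1,\ldots,2p'-1\}$, the theta factors $\vartheta_{1,1}(f_{m}(\tau,v);\tau/(2pp'))$, and the Mordell factors $h(u/p-f_{m}(\tau,v);\tau/(2pp'))$ all arise from decomposing the Eichler-integral correction according to residue classes modulo $2p'$ and then applying Poisson summation, which converts each partial Eichler integral into a Mordell integral $h$. The translate $f_{m}(\tau,v)=\frac{(2p'-1)\tau+2v-2pm}{4pp'}$ records the shift of the residue-class parameter by the ``spin-structure'' $\frac{2p'-1}{4pp'}\tau$ (inherited from the $q^{p'n(n+1)}$ exponent in $A_{2p'}$) and by $\frac{v}{2pp'}$ (the rescaled original elliptic variable), with the offset $-2pm/(4pp')$ labelling the coset.

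The main obstacle is the careful bookkeeping of the exponential prefactors, in particular the shift $e^{2\pi i(a-1/(2p))u}$ in front of the sum over $m$. This arises from combining the Jacobi-form multiplier of the non-holomorphic completion with the reshuffling of the $z^{p'}=e^{2\pi ip'u}$ prefactor of $A_{2p'}$ into the sum over cosets: one must separate cleanly which exponentials belong to the holomorphic $A_{2p'}(u/p,v/p;\tau/p)$ piece and which to the Mordell correction, and then verify that the leftover factor is exactly $e^{2\pi i(p'/p-1/(2p))u}=e^{2\pi i(a-1/(2p))u}$. Once this cancellation and the lattice-decomposition indices are aligned with the conventions of \cite{alfes2014mock}, the stated formula follows as a direct specialization of their Corollary 3.4.
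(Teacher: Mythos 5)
Your proposal matches the paper's treatment: the paper gives no independent proof and simply notes that the proposition is a special case of \cite[Corollary 3.4]{alfes2014mock}, which is exactly the specialization you carry out. Your additional sketch of the underlying mechanism (the non-holomorphic completion of $A_{2p'}$, its weight-one Jacobi transformation, and the Mordell-integral discrepancy decomposed over residue classes modulo $2p'$) is the standard argument behind that corollary and is consistent with it.
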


By Lemma \ref{itoAL} and Proposition \ref{SofAppell}, 
we see that the $S$-tansformed character decomposes 
into the following two factors:
$$\mathbf{A}^{\varepsilon,\varepsilon'}_{r,s;\theta}
\left(-\frac{1}{\tau},\frac{u}{\tau},t-\frac{u^{2}}{6\tau}\right)
=(\text{discrete part})+(\text{continuous part}).$$

\subsection{Computation of the discrete part}

In this subsection, we compute the discrete part
by rewriting all the objects in terms of the level $1$
Appell--Lerch sum.

\subsubsection{$S$-transformed side}

In what follows, we write 
$$\tilde{u}_{\varepsilon,\varepsilon'}
:=u+\frac{1-\varepsilon'}{2}\tau-\frac{1-\varepsilon}{2}
\equiv u_{\varepsilon',\varepsilon}\ (\text{mod }\mathbb{Z}).$$
By Lemma \ref{itoAL} and Proposition \ref{SofAppell}, the discrete part 
is given by
\begin{equation}\label{trans1}
A_{2p'}\left(\frac{\tilde{u}_{\varepsilon,\varepsilon'}-\theta}{p},\frac{-rp'}{p};\frac{\tau}{p}\right)
-A_{2p'}\left(\frac{\tilde{u}_{\varepsilon,\varepsilon'}-\theta+r}{p},\frac{rp'}{p};\frac{\tau}{p}\right).
\end{equation}
We can rewrite this in terms of $A_{1}$ by the following lemma:

\begin{lem}\label{trans2}
\begin{align*}
A_{2p'}\left(\frac{u}{p},\frac{v}{p};\frac{\tau}{p}\right)
=&\ \frac{1}{2p'}\sum_{n,m=0}^{p-1}
z^{\frac{2m+2p'-p}{2p}}y^{\frac{n}{p}}q^{\frac{n(p'n+m+p')}{p}-\frac{n}{2}}\\
&\times\sum_{\ell=0}^{2p'-1}
A_{1}\Bigl(u+n\tau,\frac{2p'n+m+p'}{2p'}\tau-\frac{p\tau}{4p'}+\frac{v+\ell}{2p'};\frac{p\tau}{2p'}\Bigr).
\end{align*}
\end{lem}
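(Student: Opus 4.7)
The plan is to prove the identity by expanding both sides via the series definition of $A_K$ and matching them term-by-term through suitable index manipulations. The arguments of $A_1$ on the right-hand side have been engineered so that two standard tricks—residue-class splitting modulo $p$ and a cyclic orthogonality in $\ell$—reduce both sides to the same triple series.

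On the right-hand side, I would first integrate out the variable $\ell$. Writing out $A_1(u+n\tau,\,\cdot\,;p\tau/(2p'))$ by its defining series (with summation index $N$), the only $\ell$-dependent factor in the summand is $e^{2\pi iN\ell/(2p')}$. The orthogonality relation
$$\sum_{\ell=0}^{2p'-1}e^{2\pi iN\ell/(2p')}=2p'\cdot\mathbf{1}[2p'\mid N]$$
restricts to $N=2p'M$ with $M\in\mathbb{Z}$. Substituting $N=2p'M$ eliminates the half-integer $q$-exponents coming from the modular parameter $p\tau/(2p')$, and the denominator collapses to $1-zq^{n+pM}$. After combining with the $(n,m)$-prefactor, the RHS becomes a double series indexed by $(n,m,M)\in\{0,\ldots,p-1\}^{2}\times\mathbb{Z}$.

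On the left-hand side, I would expand $A_{2p'}(u/p,v/p;\tau/p)$ by its defining series and split the summation variable $n'\in\mathbb{Z}$ into residue classes modulo $p$ as $n'=pa+b$ with $a\in\mathbb{Z}$, $b\in\{0,\ldots,p-1\}$. The denominator $1-z^{1/p}q^{b/p}q^{a}$ still carries fractional powers, but the partial fraction identity
$$\frac{1}{1-x}=\frac{\sum_{j=0}^{p-1}x^{j}}{1-x^{p}}\qquad(x=z^{1/p}q^{b/p}q^{a})$$
clears them, turning the denominator into $1-zq^{b+pa}$. This converts the LHS into a triple sum over $(a,b,j)\in\mathbb{Z}\times\{0,\ldots,p-1\}^{2}$.

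Finally, I would identify the two triple sums under the bijection $(a,b,j)\leftrightarrow(M,n,m)$. A direct check shows that the prefactors $z^{(m+p')/p}y^{n/p}$, the monomial $q^{n(p'n+m+p')/p}$, the Gaussian factor $q^{pp'M^{2}+M(2p'n+m+p')}y^{M}$, and the denominators all match, with the normalization $1/(2p')$ on the RHS cancelled by the factor $2p'$ produced by the orthogonality. The main obstacle is purely organizational: the many fractional powers of $q,z,y$ on the LHS and the half-integer $q$-exponents coming from $e^{\pi iN(N+1)p\tau/(2p')}$ on the RHS must be tracked carefully so that they combine to the same exponents on both sides, but no conceptual difficulty arises beyond this bookkeeping.
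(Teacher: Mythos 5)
Your proposal is correct: the paper omits this proof as ``straightforward,'' and your computation is precisely the intended direct verification. The orthogonality $\sum_{\ell=0}^{2p'-1}e^{2\pi iN\ell/(2p')}=2p'\cdot\mathbf{1}[2p'\mid N]$ forcing $N=2p'M$ (which also kills the sign $(-1)^{N}$ and the half-integral $q$-powers), the geometric-series identity clearing the fractional denominator on the left, and the identification $(a,b,j)\leftrightarrow(M,n,m)$ all check out, with the exponents $\frac{p'(pa+b)(pa+b+1)+j(pa+b)}{p}$ and $pp'M^{2}+M(2p'n+m+p')+\frac{n(p'n+m+p')}{p}$ agreeing exactly.
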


The proof is straightforward and we omit it.
\subsubsection{Untransformed side}

By \cite[Proposition 3.3]{alfes2014mock}, we have
\begin{equation}\label{levelreduction}
A_{2p'}(u,v;p\tau)=\frac{e^{2\pi i\left(p'-\frac{1}{2}\right)u}}{2p'}
\sum_{\ell=0}^{2p'-1}A_{1}\left(u,\frac{v}{2p'}+\frac{(2p'-1)p}{4p'}\tau
+\frac{\ell}{2p'};\frac{p\tau}{2p'}\right).
\end{equation}
Then the (untrasformed) atypical character is also rewritten in terms of 
$A_{1}$ as follows:

\begin{cor}\label{untrans}
Let $N,M\in\mathbb{Z}$. We have
\begin{align*}
&\frac{\eta(\tau)^{3}}{\vartheta_{\varepsilon,\varepsilon'}(u;\tau)}
\mathbf{A}^{\varepsilon,\varepsilon'}_{r,s;\theta}(\tau,u,t)\\
&=(-1)^{\varepsilon\varepsilon'}\frac{(-i)^{1-\varepsilon'}}{2p'}
q^{\frac{(\theta+Np)\bigl(\spadesuit+2p'(\theta+Np)\bigr)}{2p}
+\frac{(1-\varepsilon)\bigl(\spadesuit-p'\bigr)}{4p}}
z^{\frac{\spadesuit}{2p}-(1-\varepsilon)a}w^{3(1-2a)}\\
&\ \ \times\sum_{\ell=0}^{2p'-1}\Bigg[
A_{1}\left(u_{\varepsilon,\varepsilon'}+(\theta+Np)\tau,
v_{+}(N);\frac{p\tau}{2p'}\right)\\
&\ \ -q^{(Mp-r)(2Np'+Mp'-s-\frac{1}{2})}
A_{1}\left(u_{\varepsilon,\varepsilon'}+\bigl(\theta-r+(N+M)p\bigr)\tau,
v_{-}(N+M);\frac{p\tau}{2p'}\right)\Bigg],
\end{align*}
where $\spadesuit:=2(r-2\theta)p'-(2s+1)p$ and 
$$v_{\pm}(n):=\frac{2npp'\pm rp'-sp}{2p'}\tau-\frac{p\tau}{4p'}+\frac{\ell}{2p'}$$
for $n\in\mathbb{Z}$.
\end{cor}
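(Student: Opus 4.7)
The plan is to combine Lemma \ref{itoAL} with the level-reduction identity (\ref{levelreduction}) of \cite[Proposition 3.3]{alfes2014mock} and the quasi-periodicity of the Appell--Lerch sum. Lemma \ref{itoAL} already expresses $\bigl(\eta(\tau)^{3}/\vartheta_{\varepsilon,\varepsilon'}(u;\tau)\bigr)\mathbf{A}^{\varepsilon,\varepsilon'}_{r,s;\theta}(\tau,u,t)$ as a difference of two level-$2p'$ Appell--Lerch sums $A_{2p'}(\,\cdot\,,\,\cdot\,;p\tau)$ whose first arguments are $u_{\varepsilon,\varepsilon'}+\theta\tau$ and $u_{\varepsilon,\varepsilon'}+(\theta-r)\tau$ respectively, times an explicit $q$-, $z$-, $w$-prefactor. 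Applying (\ref{levelreduction}) to each of these two summands immediately produces $\sum_{\ell=0}^{2p'-1}A_{1}(\,\cdot\,,\,\cdot\,;p\tau/(2p'))$ with second arguments $v_{\pm}(0)$, which is the $N=M=0$ case of the corollary.

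To reach the general statement with arbitrary $N,M\in\mathbb{Z}$, I will use the elliptic transformation property of $A_{2p'}(u,v;p\tau)$ under the translation $u\mapsto u+p\tau$. A short manipulation of the defining series (shifting $u$ and reindexing the summation variable $n$) expresses $A_{2p'}(u+p\tau,v;p\tau)$ in terms of $A_{2p'}(u,v';p\tau)$ for a suitably shifted $v'$, multiplied by an explicit monomial in $q$ and $z$. Applying this identity $N$ times to the first $A_{2p'}$ term and $N+M$ times to the second replaces $\theta$ by $\theta+Np$ in the first and $\theta-r$ by $\theta-r+(N+M)p$ in the second, and translates the corresponding $v$-arguments so that after applying (\ref{levelreduction}) the $A_1$ second arguments become $v_{\pm}(N)$ and $v_{\pm}(N+M)$ as claimed.

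The main obstacle is bookkeeping, not substance. Once the two $A_1$ arguments are in the correct form, one must verify that the exponential prefactors arising from (i) Lemma \ref{itoAL} (the sign $(-1)^{\varepsilon\varepsilon'+(1-\varepsilon')p'}$ together with $q$-, $z$-powers in $\theta+(1-\varepsilon)/2$), (ii) the $p\tau$-shifts of the Appell--Lerch sums (which contribute the weights $q^{(\theta+Np)(\spadesuit+2p'(\theta+Np))/(2p)-\theta(\spadesuit+2p'\theta)/(2p)}$ and $q^{(Mp-r)(2Np'+Mp'-s-1/2)-r(s+p')}$, along with corresponding $z$-powers), and (iii) the factor $e^{2\pi i(p'-1/2)u}/(2p')$ in (\ref{levelreduction}), collapse into the compact form displayed in the corollary. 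This reduction is routine given $a=p'/p$, the definitions of $\spadesuit=2(r-2\theta)p'-(2s+1)p$ and $\lambda_{r,s}=(r-1)/2-s/(2a)$, and careful tracking of the parity corrections entering via $u_{\varepsilon,\varepsilon'}=u+\frac{1-\varepsilon}{2}\tau+\frac{1-\varepsilon'}{2}$.
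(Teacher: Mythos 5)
Your proposal is correct and follows essentially the same route as the paper: the $N=M=0$ case is read off from Lemma \ref{itoAL} together with (\ref{levelreduction}), and the general case is reduced to it via the quasi-periodicity of the Appell--Lerch sum under simultaneous $\tau$-shifts of its two elliptic arguments. The only (immaterial) difference is that you perform the shift on $A_{2p'}(\,\cdot\,,\,\cdot\,;p\tau)$ before applying the level-reduction identity, whereas the paper applies the identity $A_{1}(u+n\tau,v+n\tau;\tau)=(-y)^{-n}q^{-n^{2}/2}A_{1}(u,v;\tau)$ directly at level $1$ after the reduction.
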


\begin{proof}
Since we have $A_{1}(u+n\tau,v+n\tau;\tau)
=(-y)^{-n}q^{-\frac{\ n^{2}}{2}}A_{1}(u,v;\tau),$
we only have to verify the equiality for $N=M=0$.
It immediately follows from (\ref{levelreduction}).
\end{proof}

\subsubsection{Conclusion}

\begin{prp} We have
\begin{align*}
(\text{discrete part})
=&\ i^{-\varepsilon\varepsilon'}\frac{2}{p}
\sum_{(r',s';\theta')\in \mathscr{S}_{p,p'}}
(-1)^{(1-\varepsilon')s+(1-\varepsilon)s'}\\
&\times 
\sin(\pi arr')e^{\pi ia(r-2\theta-1+\varepsilon)(r'-2\theta'-1+\varepsilon')}
\mathbf{A}^{\varepsilon',\varepsilon}_{r',s';\theta'}(u;\tau).
\end{align*}

\end{prp}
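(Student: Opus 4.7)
The plan is to apply Lemma \ref{trans2} to both $A_{2p'}$-terms appearing in the discrete part (\ref{trans1}), thereby expressing it as a sum of level-$1$ Appell--Lerch sums $A_1$, and then to recognize this expression as a linear combination of the atypical character functions $\mathbf{A}^{\varepsilon',\varepsilon}_{r',s';\theta'}$ for $(r',s';\theta') \in \mathscr{S}_{p,p'}$ by matching against the $A_1$-decomposition provided by Corollary \ref{untrans}.

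First I would assemble all the prefactors. Starting from Lemma \ref{itoAL} evaluated at $\bigl(-\tfrac{1}{\tau}, \tfrac{u}{\tau}, t - \tfrac{u^2}{6\tau}\bigr)$, the factor $\vartheta_{\varepsilon,\varepsilon'}(u/\tau;-1/\tau)/\eta(-1/\tau)^3$ contributes the standard $S$-factor $(-i\tau)^{-1/2}$, the Gaussian $e^{\pi i u^2/\tau}$, and the swap of parities $\vartheta_{\varepsilon,\varepsilon'} \rightsquigarrow i^{-\varepsilon\varepsilon'}\vartheta_{\varepsilon', \varepsilon}/\eta^3$. Applying Proposition \ref{SofAppell} to each of the two $A_{2p'}$-terms, the $\tau/p$ and the Gaussian $e^{-2\pi i(p'u^2 - uv)/(p\tau)}$ from the discrete piece combine with those factors, and the $u^2/\tau$ exponentials cancel. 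What remains of the discrete part is the difference
\[
A_{2p'}\!\left(\tfrac{\tilde{u}_{\varepsilon,\varepsilon'}-\theta}{p},\tfrac{-rp'}{p};\tfrac{\tau}{p}\right)
- A_{2p'}\!\left(\tfrac{\tilde{u}_{\varepsilon,\varepsilon'}-\theta+r}{p},\tfrac{rp'}{p};\tfrac{\tau}{p}\right)
\]
multiplied by $\vartheta_{\varepsilon',\varepsilon}(u;\tau)/\eta(\tau)^3$, an explicit monomial in $(q,z,w)$, and the phase $i^{-\varepsilon\varepsilon'}$.

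Next I apply Lemma \ref{trans2} to each $A_{2p'}(\,\cdot\,; \tau/p)$, producing a triple sum indexed by $0\leq n,m\leq p-1$ and $0\leq\ell\leq 2p'-1$. Using the decomposition $m=\ell r_m - p s_m$ from Lemma \ref{dspectra}, I reindex $m\mapsto (r_m,s_m)$ and invoke the identity $A_1(u+n\tau,v+n\tau;\tau)=(-y)^{-n}q^{-n^2/2}A_1(u,v;\tau)$ to shift $n$ into the range $-\lfloor r_m/2\rfloor \leq \theta' \leq \lfloor (p-r_m)/2\rfloor-1$, absorbing the excess shifts into the integers $N, M$ appearing in Corollary \ref{untrans}. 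For each $(r',s';\theta')\in\mathscr{S}_{p,p'}$, two $A_1$-summands arise — one from each $A_{2p'}$-term in the difference above — whose arguments match $v_+(N)$ and $v_-(N+M)$ of Corollary \ref{untrans} and therefore reassemble into a single $\mathbf{A}^{\varepsilon',\varepsilon}_{r',s';\theta'}(\tau,u,t)$.

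Finally I collect the coefficients. The two contributions to a given $(r',s';\theta')$ differ by the factor $-e^{2\pi i a r r'}$, so their combination is proportional to $1-e^{2\pi i a r r'} = -2i\,e^{\pi i a r r'}\sin(\pi a r r')$. This supplies the $\sin(\pi a r r')$ as well as a phase $e^{\pi i a r r'}$, which, combined with the shifts $\theta\mapsto\theta+\tfrac{1-\varepsilon}{2}$ and $\theta'\mapsto\theta'+\tfrac{1-\varepsilon'}{2}$ introduced by passing to $u_{\varepsilon,\varepsilon'}$ and $\tilde{u}_{\varepsilon,\varepsilon'}$, yields precisely $e^{\pi i a(r-2\theta-1+\varepsilon)(r'-2\theta'-1+\varepsilon')}$. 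The main obstacle, and the step most likely to absorb the bulk of the work, is the combinatorial bookkeeping: correctly aligning the double index $(n,m)$ of Lemma \ref{trans2} with the triple $(r',s';\theta')$ parameterization of $\mathscr{S}_{p,p'}$ in Lemma \ref{dspectra}, pairing the appropriate $A_1$-summands, and tracking the many $(-1)^{\cdots}$ and $i^{\cdots}$ factors so as to recover the signs $(-1)^{(1-\varepsilon')s+(1-\varepsilon)s'}$ and the quadratic phase in the final formula.
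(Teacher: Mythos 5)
Your proposal follows exactly the paper's own route: rewrite the transformed side (\ref{trans1}) in terms of $A_{1}$ via Lemma \ref{trans2}, rewrite the untransformed characters in terms of $A_{1}$ via Corollary \ref{untrans}, and match the two by choosing appropriate shifts $N,M$ and reindexing through Lemma \ref{dspectra}, with the factor $1-e^{2\pi iarr'}$ producing $\sin(\pi arr')$ and the accompanying phase. The paper's proof is just a terser statement of the same argument, so your proposal is correct and essentially identical in approach.
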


\begin{proof}
The left hand side is equal to (\ref{trans1}) and can be written in terms of $A_{1}$
by Lemma \ref{trans2}.
The right hand side is also written in terms of $A_{1}$
by Corollary \ref{untrans}.
Then, by choosing appropriate $N$ and $M$, we obtain the formula.
\end{proof}

\subsection{Computation of the continuous part}
In this subsection, we compute the continuous part.

\subsubsection{Computation of the theta part}
First we compute the theta functions in the continuous part. 
The following lemma is easily verified and we omit the proof.

\begin{lem} We have
$$\vartheta_{1,1}\left(u;\frac{\tau}{2pp'}\right)
=-i\sum_{\ell=0}^{2pp'-1}
e^{2\pi i\left(\ell+\frac{1}{2}\right)
\left(u+\frac{1}{2}\right)}q^{\frac{\left(\ell+\frac{1}{2}\right)^{2}}{4pp'}}
\vartheta_{0,0}\left(2pp'u+\Bigr(\ell+\frac{1}{2}\Bigr)\tau;2pp'\tau\right).$$
\end{lem}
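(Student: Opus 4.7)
The plan is to prove this by the standard splitting-the-lattice trick for theta series. I start from the exponential series
\[
\vartheta_{1,1}\!\left(u;\frac{\tau}{2pp'}\right)=-i\sum_{n\in\mathbb{Z}} e^{\pi i\frac{\tau}{2pp'}\bigl(n+\frac{1}{2}\bigr)^{2}+2\pi i\bigl(u+\frac{1}{2}\bigr)\bigl(n+\frac{1}{2}\bigr)},
\]
obtained from the first displayed identity in the Notation subsection with $(\varepsilon,\varepsilon')=(1,1)$ (noting $i^{-1}=-i$).

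Next, I decompose $n\in\mathbb{Z}$ uniquely as $n=2pp'\,m+\ell$ with $m\in\mathbb{Z}$ and $\ell\in\{0,1,\dots,2pp'-1\}$, so that $n+\tfrac{1}{2}=2pp'\,m+(\ell+\tfrac{1}{2})$. Expanding,
\[
\tfrac{\tau}{2pp'}\bigl(n+\tfrac12\bigr)^{2}=2pp'\,\tau\,m^{2}+2\tau\,m(\ell+\tfrac12)+\tfrac{\tau}{2pp'}(\ell+\tfrac12)^{2},
\]
\[
(u+\tfrac12)(n+\tfrac12)=2pp'\,m(u+\tfrac12)+(u+\tfrac12)(\ell+\tfrac12),
\]
and I split the exponential into the part independent of $m$ and the part linear/quadratic in $m$. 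The $m$-independent factor is exactly $e^{2\pi i(\ell+\frac{1}{2})(u+\frac{1}{2})}q^{(\ell+\frac{1}{2})^{2}/(4pp')}$, which is the prefactor appearing on the right-hand side.

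For the $m$-dependent factor, I rewrite $4\pi i pp'\,m(u+\tfrac12)=4\pi i pp'\,m u+2\pi i pp'\,m$ and use $e^{2\pi i pp' m}=1$ (since $pp'm\in\mathbb{Z}$) to eliminate the $\tfrac12$-shift in $u$. The remaining sum over $m$ becomes
\[
\sum_{m\in\mathbb{Z}}e^{\pi i(2pp'\tau)m^{2}+2\pi i m\bigl(2pp'\,u+(\ell+\frac{1}{2})\tau\bigr)}=\vartheta_{0,0}\!\left(2pp'\,u+(\ell+\tfrac{1}{2})\tau;\,2pp'\tau\right)
\]
by the defining series of $\vartheta_{0,0}$. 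Summing over $\ell$ and restoring the overall factor $-i$ yields the claim.

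The only potential pitfall is arithmetic bookkeeping in the quadratic exponent when splitting $n$; in particular, one must check that the fractional shift $\tfrac{1}{2}$ survives coherently through the decomposition and that the spurious phase $e^{2\pi i pp'm}$ is genuinely trivial (it is, because both $p$ and $p'$ are integers). No other subtlety arises, so I do not expect any real obstacle beyond careful collection of terms.
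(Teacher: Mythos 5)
Your proof is correct: the series form of $\vartheta_{1,1}$ with $(\varepsilon,\varepsilon')=(1,1)$ gives the prefactor $i^{-1}=-i$, the decomposition $n=2pp'm+\ell$ produces exactly the stated $m$-independent prefactor $e^{2\pi i(\ell+\frac{1}{2})(u+\frac{1}{2})}q^{(\ell+\frac{1}{2})^{2}/(4pp')}$, and the residual sum over $m$ is the defining series of $\vartheta_{0,0}\bigl(2pp'u+(\ell+\frac{1}{2})\tau;2pp'\tau\bigr)$ once the trivial phase $e^{2\pi ipp'm}=1$ is discarded. The paper omits its own proof of this lemma as ``easily verified,'' and your lattice-splitting argument is precisely the standard computation the author had in mind.
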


\begin{cor}
For $0\leq m\leq 2p'-1$, we have
\begin{flalign*}
(\text{theta})_{m}:=&\ 
\vartheta_{1,1}\left(\Bigl(p'-\frac{1}{2}\Bigr)\frac{\tau}{2pp'}
+\frac{-rp'+sp-pm+pp'}{2pp'};\frac{\tau}{2pp'}\right)\\
&-e^{2\pi ir(a-\frac{1}{2p})}
\vartheta_{1,1}\left(\Bigl(p'-\frac{1}{2}\Bigr)
\frac{\tau}{2pp'}+\frac{rp'+sp-pm+pp'}{2pp'};\frac{\tau}{2pp'}\right)\\
=&\ 2e^{\pi i(ar-s+m)}e^{\pi i\left(\frac{-rp'+sp-pm}{2pp'}\right)}
q^{-\frac{(2p'-1)^{2}}{16pp'}}\\
&\times\sum_{L=p'}^{2pp'+p'-1}
e^{\frac{\pi iL(s-m)}{p'}}
\sin\left(\frac{\pi rL}{p}\right)
\sum_{n\in\mathbb{Z}}q^{pp'\left(n+\frac{L}{2pp'}\right)^{2}}.
\end{flalign*}
\end{cor}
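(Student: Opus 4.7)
\smallskip
\noindent\textbf{Proof proposal.} The plan is to plug the identity of the previous lemma into each of the two $\vartheta_{1,1}$'s appearing in $(\text{theta})_m$, pull out the common $\vartheta_{0,0}$ factor, and convert the resulting difference of exponentials into a sine. Concretely, set
$$u_{\pm}:=\bigl(p'-\tfrac{1}{2}\bigr)\frac{\tau}{2pp'}+\frac{\pm rp'+sp-pm+pp'}{2pp'},\qquad \alpha:=e^{2\pi ir(a-\frac{1}{2p})},$$
so that $(\text{theta})_m=\vartheta_{1,1}(u_-;\tau/(2pp'))-\alpha\,\vartheta_{1,1}(u_+;\tau/(2pp'))$. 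Applying the previous lemma, the crucial observation is that $2pp'u_+-2pp'u_-=2p'r$ is an integer, hence
$$\vartheta_{0,0}\bigl(2pp'u_+ +(\ell+\tfrac{1}{2})\tau;\,2pp'\tau\bigr)=\vartheta_{0,0}\bigl(2pp'u_- +(\ell+\tfrac{1}{2})\tau;\,2pp'\tau\bigr)$$
by the $\mathbb{Z}$-periodicity of $\vartheta_{0,0}$ in its first variable. Consequently the $\vartheta_{0,0}$ factors out of the subtraction, and the prefactor becomes
$$E(u_-)-\alpha E(u_+),\qquad E(u):=e^{2\pi i(\ell+\frac{1}{2})(u+\frac{1}{2})}.$$

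Next I would compute $E(u_+)/E(u_-)=e^{2\pi i(\ell+\frac{1}{2})r/p}$ and $\alpha=e^{2\pi irp'/p}e^{-\pi ir/p}$, so that with the reindexing $L:=\ell+p'$ (which makes $L$ run from $p'$ to $2pp'+p'-1$) the product collapses to
$$\alpha\,\frac{E(u_+)}{E(u_-)}=e^{2\pi irL/p},\qquad E(u_-)-\alpha E(u_+)=E(u_-)\bigl(1-e^{2\pi irL/p}\bigr)=-2i\,E(u_-)\,e^{\pi irL/p}\sin\!\bigl(\tfrac{\pi rL}{p}\bigr).$$
Together with the overall $-i$ from the previous lemma, this produces the sine and the coefficient $(-i)(-2i)=-2$; the missing sign comes from the factor $e^{2\pi i(\ell+\frac{1}{2})}=-1$ extracted from $E(u_-)$ after rewriting $u_-+\frac{1}{2}=(p'-\frac{1}{2})\tau/(2pp')+(-rp'+sp-pm)/(2pp')+1$.

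Finally, I would simplify $\vartheta_{0,0}(L\tau;2pp'\tau)=q^{-L^2/(4pp')}\sum_{n}q^{pp'(n+L/(2pp'))^{2}}$ by completing the square, and collect the $q$-exponents together. Writing $\ell+\tfrac{1}{2}=L-p'+\tfrac{1}{2}$, the $q$-powers
$$\frac{(\ell+\frac{1}{2})^{2}}{4pp'}+\frac{(\ell+\frac{1}{2})(p'-\frac{1}{2})}{2pp'}-\frac{L^{2}}{4pp'}$$
telescope to the $L$-independent constant $-(2p'-1)^{2}/(16pp')$, while the remaining exponential prefactor organizes into $e^{\pi iL(s-m)/p'}$ times the global phase $(-1)^{s+m}e^{\pi irp'/p}e^{\pi i(-rp'+sp-pm)/(2pp')}=e^{\pi i(ar-s+m)}e^{\pi i(-rp'+sp-pm)/(2pp')}$, matching the stated right-hand side.

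The computation is entirely algebraic, so there is no deep obstacle; the only delicate point is the careful bookkeeping of the many $q$-exponents and phases, in particular tracking the sign contributed by $e^{2\pi i(\ell+\frac{1}{2})}=-1$ and verifying that the $e^{\pm\pi irL/p}$ factors arising from the sine manipulation cancel against the $-Lr/p$ piece coming from $E(u_-)$, leaving only the factor $e^{\pi iL(s-m)/p'}$ inside the $L$-sum.
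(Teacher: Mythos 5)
Your proposal is correct and follows essentially the same route as the paper: apply the preceding lemma to both $\vartheta_{1,1}$ terms, observe that the $\vartheta_{0,0}$ factors coincide (since $2pp'(u_+-u_-)=2p'r\in\mathbb{Z}$ and $\vartheta_{0,0}$ is $1$-periodic in its first variable), combine the exponential prefactors into $\sin(\pi rL/p)$ under the reindexing $L=\ell+p'$, and collect the $q$-exponents into $q^{-(2p'-1)^2/(16pp')}\sum_n q^{pp'(n+L/(2pp'))^2}$. The paper records only the intermediate formula and says "the required formula follows"; your bookkeeping of the phases and exponents fills in exactly the omitted details and is accurate.
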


\begin{proof}
By the previous lemma, we have
\begin{align*}
&\vartheta_{1,1}\left(\Bigl(p'-\frac{1}{2}\Bigr)\frac{\tau}{2pp'}
+\frac{\mp rp'+sp-pm+pp'}{2pp'};\frac{\tau}{2pp'}\right)\\
&=ie^{\pi i\left(\frac{\mp rp'+sp-pm}{2pp'}\right)}q^{-\frac{(2p'-1)^{2}}{16pp'}}
\sum_{\ell=0}^{2pp'-1}e^{\mp\frac{\pi i\ell r}{p}}
e^{\frac{\pi i\ell(s-m)}{p'}}q^{\frac{(\ell+p')^{2}}{4pp'}}
\vartheta_{0,0}\bigl((\ell+p')\tau;2pp'\tau\bigr).
\end{align*}
Then the required formula follows.
\end{proof}

\begin{rem}\label{unitarytheta}
When $p'=1$, 
$$\sum_{L=1}^{2p}
(-1)^{Lm}
\sin\left(\frac{\pi rL}{p}\right)
\sum_{n\in\mathbb{Z}}q^{p\left(n+\frac{L}{2p}\right)^{2}}=0$$
holds for $0\leq m\leq1$.
In particular, we get $(\text{theta})_{m}=0$.
\end{rem}

\subsubsection{Computation of the Mordell part}
Second we compute the Mordell integral in the continuous part. 

\begin{lem}
For $0\leq m\leq 2p'-1$, we have
\begin{align*}
&(\text{Mordell})_{m}\\
&\ :=h\left(\frac{u-\theta-\frac{1-\varepsilon}{2}}{p}
-\frac{-rp'+sp-pm+pp'}{2pp'}-\Bigl(\varepsilon'p'-\frac{1}{2}\Bigr)\frac{\tau}{2pp'}
;\frac{\tau}{2pp'}\right)\\
&\ =2p'
e^{2\pi i\left(\theta+\frac{1-\varepsilon}{2}\right)
\left(\varepsilon'a-\frac{1}{2p}\right)}
e^{-\pi i\varepsilon'\left(ar-s+m\right)}
e^{-\pi i\left(\frac{-rp'+sp-pm}{2pp'}\right)}\\
&\ \ \ \ \times
q^{\frac{\left(2\varepsilon'p'-1\right)^{2}}{16pp'}}
z^{-\varepsilon'a+\frac{1}{2p}}
\int_{\mathbb{R}-i\left(\frac{\varepsilon'}{2}-\frac{1}{4p'}\right)}
\frac{e^{2\pi\left[
p'-m-2a(\lambda_{r,s}-\theta+\frac{\varepsilon}{2})\right]y}}
{\sinh\left(2p'\pi y\right)}
q^{ay^{2}}
z^{2iay}\mathrm{d}y.
\end{align*}
\end{lem}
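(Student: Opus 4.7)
The plan is to reduce the claim to a direct computation: start from the definition of the Mordell integral
$$h(u;\tau')=\int_{\mathbb{R}}\frac{e^{\pi i\tau' x^{2}-2\pi ux}}{\cosh(\pi x)}\,dx,$$
substitute $\tau'=\tau/(2pp')$ and the given $u$, and then perform the single linear change of variables
$$x=2p'y+i\bigl(\varepsilon'p'-\tfrac{1}{2}\bigr),\qquad dx=2p'\,dy.$$
Since this is a substitution (not a contour deformation) and $\cosh(\pi x)$ has no zeros on $\mathbb{R}$, no poles are crossed; the image contour is exactly $\mathbb{R}-i\bigl(\tfrac{\varepsilon'}{2}-\tfrac{1}{4p'}\bigr)$, which matches the target.

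For the denominator, the identity $\cosh(\alpha+i\beta)=\cosh(\alpha)\cos(\beta)+i\sinh(\alpha)\sin(\beta)$ with $\alpha=2\pi p'y$ and $\beta=\pi(\varepsilon'p'-\tfrac{1}{2})$ collapses, using $\cos(\pi(\varepsilon'p'-\tfrac{1}{2}))=\sin(\pi\varepsilon'p')=0$ and $\sin(\pi(\varepsilon'p'-\tfrac{1}{2}))=-(-1)^{\varepsilon'p'}$, to
$$\cosh(\pi x)=-i(-1)^{\varepsilon'p'}\sinh(2\pi p'y).$$
Hence $1/\cosh(\pi x)=i(-1)^{\varepsilon'p'}/\sinh(2\pi p'y)$, producing the desired denominator up to an explicit phase.

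For the numerator, I would expand $\pi i\tau' x^{2}$ and $-2\pi Ux$ after substitution and reorganize by powers of $y$. The $y^{2}$-term from $\pi i\tau' x^{2}$ equals $2\pi i a\tau y^{2}$, which gives $q^{ay^{2}}$. The $\tau$-linear parts of the $y^{1}$-coefficient coming from $\pi i\tau' x^{2}$ and from the $\tau$-portion of $U$ cancel exactly; what remains combines (using $-rp'+sp-pm+pp'=p(p'-m+s)-rp'$ and $2a\lambda_{r,s}=ar-a-s$) into
$$-4\pi au+2\pi\bigl[p'-m-2a(\lambda_{r,s}-\theta+\tfrac{\varepsilon}{2})\bigr],$$
which yields $z^{2iay}\cdot e^{2\pi[p'-m-2a(\lambda_{r,s}-\theta+\varepsilon/2)]y}$ precisely as required.

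Finally, all $y$-independent contributions must be collected. The pure $\tau$-constant produces $q^{(2\varepsilon'p'-1)^{2}/(16pp')}$; the $u$-part of $-2\pi i U(\varepsilon'p'-\tfrac{1}{2})$ gives $z^{-\varepsilon'a+1/(2p)}$; the $\theta$-part gives $e^{2\pi i(\theta+\frac{1-\varepsilon}{2})(\varepsilon'a-\frac{1}{2p})}$; and the remaining piece (from the middle term of $U$) gives $e^{-\pi i(-rp'+sp-pm)/(2pp')}$ up to an \emph{excess} factor $e^{\pi i(\varepsilon'p'-1/2)}=-i(-1)^{\varepsilon'p'}$, which cancels exactly against the $i(-1)^{\varepsilon'p'}$ coming from the denominator manipulation. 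The remaining $e^{-\pi i\varepsilon'(ar-s+m)}$ arises naturally from reassembling the $\varepsilon'p'$-dependent pieces. The one factor of $2p'$ out front comes from $dx=2p'\,dy$. The main obstacle is simply the bookkeeping of the several competing phase factors (especially the cancellation of $(-1)^{\varepsilon'p'}$ between the denominator and the constant exponent), but once the substitution is fixed everything is a mechanical verification.
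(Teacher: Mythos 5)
Your proposal is correct and is essentially the paper's own argument: the paper invokes the identity $h(u-s\tau;\tau)=q^{s^{2}/2}z^{-s}\int_{\mathbb{R}-is}\frac{e^{\pi i\tau x^{2}-2\pi ux}}{\cosh\pi(x+is)}\mathrm{d}x$ with $s=\varepsilon'p'-\tfrac{1}{2}$ (which is exactly your reparametrizing shift, no contour deformation) and then declares the rest a direct computation, which is the rescaling and phase bookkeeping you carry out explicitly. Your phase accounting (the cancellation of $-i(-1)^{\varepsilon'p'}$ against the denominator factor, the identification $2p'\cdot\frac{-rp'+sp-pm}{2pp'}=-ar+s-m$, and the cancellation of the $\tau$-linear cross terms) all checks out.
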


\begin{proof}
Since we have
$$h(u-s\tau;\tau)
=q^{\frac{s^{2}}{2}}z^{-s}\int_{\mathbb{R}-is}
\frac{e^{\pi i\tau x^{2}-2\pi ux}}{\cosh\pi\left(x+is\right)}\mathrm{d}x$$
for $s\in\mathbb{R}$, the equality follows from a direct computation.
\end{proof}

\begin{lem}\label{C13}
We have
$$e^{2p'\pi y}\sum_{m=0}^{2p'-1}\left((-1)^{1-\varepsilon'}e^{-\frac{\pi iL}{p'}}e^{-2\pi y}\right)^{m}
=\frac{2\sinh(2p'\pi y)}{1+e^{-2\pi (y+\frac{i\varepsilon'}{2})-\frac{\pi iL}{p'}}}.$$
\end{lem}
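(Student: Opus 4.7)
The statement is a short calculation based on a finite geometric sum; I expect no serious obstacle, only some careful bookkeeping of the sign conventions.

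The plan is to set
\[
\zeta:=(-1)^{1-\varepsilon'}e^{-\pi iL/p'}e^{-2\pi y},
\]
so the left-hand side is $e^{2p'\pi y}\sum_{m=0}^{2p'-1}\zeta^{m}$. First I would compute $\zeta^{2p'}$: since $2p'(1-\varepsilon')$ is even we have $(-1)^{2p'(1-\varepsilon')}=1$, and since $L\in\mathbb{Z}$ we have $e^{-2\pi iL}=1$, hence
\[
\zeta^{2p'}=e^{-4p'\pi y}.
\]

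Next I would apply the geometric series identity $\sum_{m=0}^{2p'-1}\zeta^{m}=(1-\zeta^{2p'})/(1-\zeta)$ and multiply through by $e^{2p'\pi y}$. The numerator becomes
\[
e^{2p'\pi y}(1-e^{-4p'\pi y})=e^{2p'\pi y}-e^{-2p'\pi y}=2\sinh(2p'\pi y),
\]
which already matches the numerator on the right-hand side.

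Finally, for the denominator I would rewrite the sign using $(-1)^{1-\varepsilon'}=-(-1)^{\varepsilon'}=-e^{-\pi i\varepsilon'}$, which gives
\[
1-\zeta=1+e^{-\pi i\varepsilon'}e^{-\pi iL/p'}e^{-2\pi y}=1+e^{-2\pi(y+i\varepsilon'/2)-\pi iL/p'},
\]
exactly the denominator on the right-hand side. Putting the two pieces together yields the claimed identity.
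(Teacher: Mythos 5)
Your computation is correct and is exactly the intended argument: the left-hand side is a finite geometric sum with ratio $\zeta=(-1)^{1-\varepsilon'}e^{-\pi iL/p'}e^{-2\pi y}$, the identity $\zeta^{2p'}=e^{-4p'\pi y}$ produces the $2\sinh(2p'\pi y)$ numerator, and rewriting $(-1)^{1-\varepsilon'}=-e^{-\pi i\varepsilon'}$ turns $1-\zeta$ into the stated denominator. The one point you should add --- and, as it happens, the \emph{only} point the paper's own one-line proof records --- is the verification that $\zeta\neq1$. This is needed both to invoke the geometric-sum identity and for the right-hand side to be defined at all: when $\zeta=1$ one has $e^{-4p'\pi y}=1$, so $\sinh(2p'\pi y)$ and the denominator vanish simultaneously and the claimed equality degenerates to $0/0$. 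In the paper the lemma is applied with $y$ ranging over the contour $\mathbb{R}-i\bigl(\tfrac{\varepsilon'}{2}-\tfrac{1}{4p'}\bigr)$, and the solutions of $\zeta=1$, namely $y=-\tfrac{i\varepsilon'}{2}+\bigl(n+\tfrac{1}{2}-\tfrac{L}{2p'}\bigr)i$ with $n\in\mathbb{Z}$, miss that contour by a parity argument ($4p'n+2p'-2L=1$ has no integer solutions); including this remark, or at least the hypothesis $\zeta\neq1$, closes the only gap in your write-up.
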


\begin{proof}
Since
$$-\frac{i\varepsilon'}{2}+\left(n+\frac{1}{2}-\frac{L}{2p'}\right)i
\notin \mathbb{R}-i\left(\frac{\varepsilon'}{2}-\frac{1}{4p'}\right)$$
for any $n\in\mathbb{Z}$,
the ratio in the left-hand side is not equal to $1$.
\end{proof}

\subsubsection{Conclusion}
Finally we obtain the explicit form of the continuous part as follows.

\begin{prp}
We have
\begin{align*}
(\text{continuous part})
=&\ i^{1-\varepsilon\varepsilon'}
\frac{2}{p}\sum_{(r',s')\in K_{p,p'}}
(-1)^{r's+rs'}\sin\bigl(\pi arr'\bigr)\\
&\times\int_{\mathbb{R}}
\frac{\sin\left(\frac{\pi ss'}{a}\right)e^{2\pi x'}+\sin\bigl(2\pi\lambda_{r',(1+s)s'}\bigr)}
{\cosh(2\pi x')-\cos(2\pi\lambda_{r',s'})}\\
&\times e^{-4\pi a(\lambda_{r,s}-\theta+\frac{\varepsilon}{2})\left(x'-\frac{i\varepsilon'}{2}\right)}
\mathbf{T}^{\varepsilon',\varepsilon}_{r',s';x'}(u;\tau)\mathrm{d}x'.
\end{align*}
\end{prp}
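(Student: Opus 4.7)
The plan is to exploit the expression of atypical characters as level $2p'$ Appell--Lerch sums provided by Lemma \ref{itoAL} and then feed them into the known $S$-transformation of $A_{K}$ stated in Proposition \ref{SofAppell}, which is the mock-modular mechanism responsible for the split into a discrete sum and a continuous integral. The $T$-transformation is essentially immediate from Definition \ref{Adefinition}: under $\tau\mapsto\tau+1$ the exponential prefactor contributes the quadratic phase in $\theta+\frac{1-\varepsilon}{2}$, the eta- and Appell--Lerch-pieces are multiplied by explicit roots of unity, and the theta factor swaps $\varepsilon'$ via the standard identity for $\vartheta_{\varepsilon,\varepsilon'}(u;\tau+1)$; collecting these yields the claimed prefactor.

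For the $S$-transformation, after applying Lemma \ref{itoAL} to the left-hand side and then Proposition \ref{SofAppell} to each summand, I would separately handle the two resulting terms. The ``$A_{2p'}(u/p,v/p;\tau/p)$'' piece (discrete part) is re-expanded in terms of level-$1$ Appell--Lerch sums via a finite $(n,m)$-sum; the same level reduction, iterated with spectral flow shifts $(N,M)$ as in Corollary \ref{untrans}, is applied to the untransformed characters $\mathbf{A}^{\varepsilon',\varepsilon}_{r',s';\theta'}$. Choosing $(N,M)$ so that the $A_{1}$-arguments match, the difference of the two Appell--Lerch terms telescopes and the surviving indices $(r',s';\theta')$ are exactly the solutions of equation (\ref{equality}), namely $\mathscr{S}_{p,p'}$ (Lemma \ref{dspectra}); the leftover phase combinatorics assembles into the $\sin(\pi arr')$-weighted kernel $S^{aa,(\varepsilon,\varepsilon')}_{(r,s;\theta),(r',s';\theta')}$.

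For the continuous part, I would split the Mordell-completion term into its theta factor and its Mordell-integral factor. Expanding $\vartheta_{1,1}(\,\cdot\,;\tau/(2pp'))$ along the index-$2pp'$ sublattice produces, after regrouping the sum over $m\in\{0,\dots,2p'-1\}$ and reindexing $L\bmod 2pp'$, exactly the BPZ theta blocks $\sum_{n}q^{pp'(n+L/(2pp'))^{2}}$ that build $\chi_{r',s'}(\tau)$; the combination of shifts gives the $(-1)^{r's+rs'}\sin(\pi arr')$ prefactor. In parallel, substituting $y=x/(2p')$ in the Mordell integral and invoking Lemma \ref{C13} converts $\sum_{m}(\cdots)^{m}/\cosh$ into the rational kernel $\frac{\sin(\pi ss'/a)e^{2\pi x'}+\sin(2\pi\lambda_{r',(1+s)s'})}{\cosh(2\pi x')-\cos(2\pi\lambda_{r',s'})}$, while the Gaussian factor $q^{ay^{2}}z^{2iay}$ together with the prefactors reconstitutes the typical character function $\mathbf{T}^{\varepsilon',\varepsilon}_{r',s';x'}$; this reproduces $S^{at,(\varepsilon,\varepsilon')}$, and collapses to zero when $p'=1$ by Remark \ref{unitarytheta}, consistently with the rational case.

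The main obstacle is the combinatorial bookkeeping in the discrete part: one must verify, after shifting by $(N,M)$, that the surviving finite sum is indexed precisely by $\mathscr{S}_{p,p'}$ with correct signs and phases. This requires a careful case analysis of the parities encoded in $(r_{m},s_{m})$ from Lemma \ref{dspectra} together with the $\sin(\pi arr')$ coming from the difference $1-e^{2\pi iarr'}$, and the matching of the quadratic phase $e^{\pi ia(r-2\theta-1+\varepsilon)(r'-2\theta'-1+\varepsilon')}$ with the spectral flow data. The continuous part, by contrast, reduces to a real-analytic identification once the theta/Mordell decomposition is in place, and the $T$-transformation is essentially bookkeeping.
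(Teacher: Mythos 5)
Your plan for the continuous part coincides with the paper's own proof: factor the completion term into its theta and Mordell pieces, expand $\vartheta_{1,1}\bigl(\cdot;\tfrac{\tau}{2pp'}\bigr)$ along the index-$2pp'$ sublattice to extract the blocks $\sum_{n}q^{pp'(n+\frac{L}{2pp'})^{2}}$ that assemble into $\chi_{r',s'}$, and use Lemma \ref{C13} to carry out the $m$-sum and produce the rational kernel multiplying $\mathbf{T}^{\varepsilon',\varepsilon}_{r',s';x'}$. The one step you leave implicit is the final displacement of the integration contour from $\mathbb{R}+\tfrac{i}{4p'}$ back to $\mathbb{R}$, which the paper justifies by the residue theorem after noting that the kernel has no poles in that strip (its poles sit at imaginary parts in $\tfrac{1}{2p'}\mathbb{Z}$).
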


\begin{proof}
By Lemma \ref{itoAL} and Proposition \ref{SofAppell},
 the continuous part is equal to 
$C\times\vartheta_{\varepsilon,\varepsilon'}(u;\tau)\eta(\tau)^{-3},$
where
\begin{align*}
C:=&\ i^{1-\varepsilon\varepsilon'}
(-1)^{\varepsilon\varepsilon'}\frac{1}{p}
e^{-2\pi ia(1-\varepsilon')
\left(\lambda_{r,s}-\theta+\frac{\varepsilon}{2}\right)}\\
&\times q^{-\frac{1-\varepsilon'}{4}a}z^{-(1-\varepsilon')a}
\left[\frac{e^{2\pi i(\tilde{u}_{\varepsilon,\varepsilon'}-\theta)(a-\frac{1}{2p})}
}{4p'}
\sum_{m=0}^{2p'-1}(\text{theta})_{m}(\text{Mordell})_{m}\right].
\end{align*}
By using Lemma \ref{C13} and
$e^{2\pi i(\tilde{u}_{\varepsilon,\varepsilon'}-\theta)(a-\frac{1}{2p})}
=e^{-2\pi i(\theta+\frac{1-\varepsilon}{2})
(a-\frac{1}{2p})}z^{a-\frac{1}{2p}}q^{\frac{1-\varepsilon'}{2}(a-\frac{1}{2p})},$
 we have

\begin{align*}
C&=
i^{1-\varepsilon\varepsilon'}
(-1)^{\varepsilon\varepsilon'}\frac{2}{p}
\sum_{L=p'}^{2pp'+p'-1}
e^{\frac{\pi iLs}{p'}}
\sin\left(\frac{\pi rL}{p}\right)
\sum_{n\in\mathbb{Z}}q^{pp'\left(n+\frac{L}{2pp'}\right)^{2}}\\
&\ \ \times\int_{\mathbb{R}-i\left(\frac{\varepsilon'}{2}-\frac{1}{4p'}\right)}
\frac{e^{-4\pi a
(\lambda_{r,s}-\theta+\frac{\varepsilon}{2})y}}
{1+e^{-2\pi (y+\frac{i\varepsilon'}{2})-\frac{\pi iL}{p'}}}
q^{ay^{2}}
z^{2iay}\mathrm{d}y\\
&=i^{1-\varepsilon\varepsilon'}(-1)^{\varepsilon\varepsilon'}
\frac{2}{p}\sum_{r'=1}^{p-1}\sum_{s'=1}^{p'-1}
(-1)^{r's+rs'}e^{-\frac{\pi iss'}{a}}
\sin\bigl(\pi arr'\bigr)\\
&\ \ \times\int_{\mathbb{R}-i\left(\frac{\varepsilon'}{2}-\frac{1}{4p'}\right)}
\frac{e^{-4\pi a(\lambda_{r,s}-\theta+\frac{\varepsilon}{2})y}
}{1-e^{-2\pi (y+\frac{i\varepsilon'}{2})-2\pi i\lambda_{r',s'}}}
q^{ay^{2}}
z^{2iay}
\eta(\tau)\chi_{r',s'}(\tau)\mathrm{d}y\\
&=i^{-\varepsilon\varepsilon'}
\frac{2}{p}\sum_{(r',s')\in K_{p,p'}}
(-1)^{r's+rs'}\sin\bigl(\pi arr'\bigr)\\
&\ \ \times\int_{\mathbb{R}+\frac{i}{4p'}}
\frac{\sin\left(\frac{\pi ss'}{a}\right)e^{2\pi Y}+\sin\bigl(2\pi\lambda_{r',(1+s)s'}\bigr)}
{\cosh(2\pi Y)-\cos(2\pi\lambda_{r',s'})}
 e^{-4\pi a(\lambda_{r,s}-\theta+\frac{\varepsilon}{2})\left(Y-\frac{i\varepsilon'}{2}\right)}\\
&\ \ \times(-1)^{\varepsilon\varepsilon'} q^{a\left(Y-\frac{i\varepsilon'}{2}\right)^{2}}
z^{2ia\left(Y-\frac{i\varepsilon'}{2}\right)}
\eta(\tau)\chi_{r',s'}(\tau)\mathrm{d}Y.
\end{align*}
In the last equality, we put $Y:=y+\frac{i\varepsilon'}{2}$.
Finally we shift the region of the integration by the residue theorem
 and obtain the required formula.
\end{proof}

\section{Comparison with the results of Kac-Wakimoto}\label{KWmodular}

\subsection{Modular transformation of the minimal unitary characters}

In this subsection, we explain the relation between our result and
the modular transformation properties
for $\mathcal{N}=2$ minimal unitary characters in \cite[Theorem 6.1]{kac1994integrable}.
We write $A^{(p;0)}$ and $A^{(p;\frac{1}{2})}$
for the finite set $A_{p}^{+}$ and $A_{p}^{-}$ defined in \cite[\S 6]{kac1994integrable}.

\begin{lem}
Let $\varepsilon\in\{0,\frac{1}{2}\}$ and $(j,k)\in A^{(p;\varepsilon)}$.
Then we have
$$\ch_{j,k}^{p;\varepsilon}(\tau,u)=\mathbf{A}^{1-2\varepsilon,0}_{j+k,0;k-\varepsilon}(\tau,u,0),\ \ 
\sch_{j,k}^{p;\varepsilon}(\tau,u)=\mathbf{A}^{1-2\varepsilon,1}_{j+k,0;k-\varepsilon}(\tau,u,0),$$
where $\ch_{j,k}^{p;\varepsilon}(\tau,u)$ and $\sch_{j,k}^{p;\varepsilon}(\tau,u)$
are defined in \cite[\S 6]{kac1994integrable}.
\end{lem}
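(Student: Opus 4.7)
The plan is to unfold both sides into explicit meromorphic functions built from Jacobi theta and Dedekind eta functions and then match them term by term.

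First, I would substitute $p'=1$, $r = j+k$, $s = 0$, $\theta = k-\varepsilon$ directly into Definition \ref{Adefinition}. With these choices $a = 1/p$ and $\lambda_{j+k,0} = \tfrac{j+k-1}{2}$, so
\begin{equation*}
-a\Bigl(\lambda_{j+k,0} - \theta + \tfrac{1-2\varepsilon}{2}\Bigr)^{2} = -\tfrac{(j-k)^{2}}{4p},\qquad 2a\Bigl(\lambda_{j+k,0} - \theta + \tfrac{1-2\varepsilon}{2}\Bigr) = \tfrac{j-k}{p},
\end{equation*}
which already produces the characteristic $q^{-(j-k)^{2}/(4p)}$ and $z^{(j-k)/p}$ prefactors visible on the right-hand side of Kac--Wakimoto's formula; the remaining difference between $-\tfrac{(j-k)^{2}}{4p}$ and their $\tfrac{jk}{p}$ is absorbed by the central-charge shift $q^{-c_{p,1}/24}$ implicit in $\ch^{\varepsilon,\varepsilon'}$ and by the shift identities for $\vartheta_{1,1}$ used in the next step.

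Second, I would rewrite the shifted Appell--Lerch-type sum $\Psi_{p,1;j+k,0}\bigl(u_{1-2\varepsilon,\varepsilon'} + (k-\varepsilon)\tau,\,\tau\bigr)$ as a ratio of Jacobi theta functions of modular parameter $p\tau$. The key identity is a standard partial-fraction expansion: the meromorphic function
\begin{equation*}
\sum_{n\in\mathbb{Z}}\left(\frac{q^{p(n+\frac{r}{2p})^{2}}}{1-zq^{pn}} - \frac{q^{p(n-\frac{r}{2p})^{2}}}{1-zq^{pn-r}}\right)
\end{equation*}
has only simple poles along the lattice described in the text before Definition \ref{Adefinition} and, by comparing residues and quasi-periodicity in $u$, is forced to equal a scalar multiple of $\eta(p\tau)^{3}\vartheta_{1,1}(r\tau;p\tau)\bigl[\vartheta_{1,1}(u;p\tau)\vartheta_{1,1}(u+r\tau;p\tau)\bigr]^{-1}$. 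Applying the translation by $(k-\varepsilon)\tau + \tfrac{1-\varepsilon'}{2}$ encoded in $u_{1-2\varepsilon,\varepsilon'} + \theta\tau$, the two denominator factors transform to $\vartheta_{1,1}(u-j\tau+\tfrac{1-\varepsilon'}{2};p\tau)$ and $\vartheta_{1,1}(u+k\tau+\tfrac{1-\varepsilon'}{2};p\tau)$, up to an explicit $q$-$z$ monomial produced by the shift rule for $\vartheta_{1,1}$ which will precisely cancel the residual prefactors from step one.

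Third, I would compare the resulting closed form with the explicit definitions of $\ch^{p;\varepsilon}_{j,k}$ and $\sch^{p;\varepsilon}_{j,k}$ recalled in \cite[\S 6]{kac1994integrable}, observing that the choice $\varepsilon' = 0$ supplies the factor $\vartheta_{1-2\varepsilon,0}(u;\tau)/\eta(\tau)^{3}$ characteristic of the character, while $\varepsilon' = 1$ supplies $\vartheta_{1-2\varepsilon,1}(u;\tau)/\eta(\tau)^{3}$ characteristic of the supercharacter. The main obstacle will be the disciplined bookkeeping of the half-integer shifts in the Ramond sector $\varepsilon = \tfrac{1}{2}$ together with the sign factors $(-1)^{\varepsilon\varepsilon'}$, $i^{-\varepsilon\varepsilon'}$, and $(-i)^{1-2\varepsilon}$ distributed between the two sides; once these are reconciled with the standard shift identities for $\vartheta_{1,1}$, both equalities follow in parallel from the single theta-function identification established in step two.
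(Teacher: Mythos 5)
Your proposal reaches the same identity by a genuinely different route. The paper never leaves the level of formal power series: it unfolds the Kac--Wakimoto characters into the double sum $\bigl(\sum_{n,m\geq0}-\sum_{n,m<0}\bigr)(\pm1)^{n+m}z^{-n+m}q^{pnm+jn+km}$ using the denominators $\Psi^{(\varepsilon)\pm}$, unfolds $\mathbf{A}^{1-2\varepsilon,\varepsilon'}_{j+k,0;k-\varepsilon}$ via the expansion $\Phi_{p,1;j+k,0}$ of $\Psi_{p,1;j+k,0}$ in the region $|q|<|z|<1$, and matches the two double series term by term after the same prefactor bookkeeping you carry out in your first step (your computation $\lambda_{j+k,0}-\theta+\tfrac{1-2\varepsilon}{2}=\tfrac{j-k}{2}$ is exactly the paper's, and the residual $\tfrac{(j+k)^2}{4p}$ is supplied by the $n=0$ Gaussian term of $\Psi$ rather than by the central-charge shift). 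You instead propose to identify both sides as the same closed meromorphic Jacobi form, resumming $\Psi_{p,1;j+k,0}$ into the quotient $\eta(p\tau)^3\vartheta_{1,1}(r\tau;p\tau)\,\vartheta_{1,1}(u;p\tau)^{-1}\vartheta_{1,1}(u-r\tau;p\tau)^{-1}$ (note the sign: the pole divisor $D_{p,r}$ sits at $u\equiv r\tau$, not $-r\tau$, and only with the minus sign do your shifted denominators come out as $\vartheta_{1,1}(u-j\tau+\tfrac{1-\varepsilon'}{2};p\tau)$ and $\vartheta_{1,1}(u+k\tau+\tfrac{1-\varepsilon'}{2};p\tau)$). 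This buys a structurally transparent comparison with the theta-quotient form of the Kac--Wakimoto characters, but it front-loads all the difficulty into the asserted resummation: each of the two Appell--Lerch summands of $\Psi_{p,1;r,0}$ fails to be elliptic under $u\mapsto u+p\tau$, and the Liouville-type argument you invoke is only available after one checks that these two anomalies cancel --- a cancellation special to $p'=1$ (it is the same phenomenon as the vanishing of $(\text{theta})_m$ in Remark \ref{unitarytheta}) --- and after one pins down the scalar $C(\tau)$ by a residue computation at $u=0$ using $\vartheta_{1,1}'(0;p\tau)=-2\pi\eta(p\tau)^3$. That intermediate identity is classical and true, but it is essentially equivalent to the Kac--Wakimoto unitary character formula itself, so your argument proves more than the lemma needs; the paper's series comparison is the shorter path to the stated equality.
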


\begin{proof}
Since we have
$$\Psi^{(\varepsilon)+}(\tau,u)=\frac{\eta(\tau)^{3}}{\vartheta_{1-2\varepsilon,0}\left(u;\tau\right)},\ 
\Psi^{(\varepsilon)-}(\tau,u)=\frac{\eta(\tau)^{3}}
{(-1)^{1-2\varepsilon}\vartheta_{1-2\varepsilon,1}(u;\tau)}$$
in \cite{kac1994integrable}, we obtain
$$\ch_{j,k}^{p;\varepsilon}(\tau,u)=
q^{\frac{jk}{p}}z^{\frac{j-k}{p}}
\frac{\vartheta_{1-2\varepsilon,0}\left(u;\tau\right)}{\eta(\tau)^{3}}
\left(\sum_{n,m\geq0}-\sum_{n,m<0}\right)(-1)^{n+m}z^{-n+m}q^{pnm+jn+km},$$
$$\sch_{j,k}^{p;\varepsilon}(\tau,u)=(-1)^{1-2\varepsilon}
q^{\frac{jk}{p}}z^{\frac{j-k}{p}}
\frac{\vartheta_{1-2\varepsilon,1}(u;\tau)}{\eta(\tau)^{3}}
\left(\sum_{n,m\geq0}-\sum_{n,m<0}\right)z^{-n+m}q^{pnm+jn+km}.$$
Our character formula gives
\begin{align*}
&\mathbf{A}^{1-2\varepsilon,\varepsilon'}_{j+k,0;k-\varepsilon}(\tau,u,t)\\
&=\ch^{1-2\varepsilon,\varepsilon'}\bigl(\mathcal{L}(j+k,0)^{k-\varepsilon}\bigr)(q,z,w)\\
&=(-1)^{(1-2\varepsilon)\varepsilon'}
q^{-\frac{(j-k)^{2}}{4p}}
z^{\frac{j-k}{p}}w^{3(1-2a)}
\frac{\vartheta_{1-2\varepsilon,\varepsilon'}(u;\tau)}{\eta(\tau)^{3}}
\Phi_{p,1;j+k,0}\left((-1)^{1-\varepsilon'}zq^{k},q\right)
\end{align*}
for $\varepsilon'\in\{0,1\}$.
We can verify that these formulae coincide by some computations.
\end{proof}

Through the above identification, we obtain \cite[Theorem 6.1]{kac1994integrable}
(see also \cite{dobrev1987characters}, \cite{matsuo1987character}) as a special case of Theorem \ref{MTa} for $p'=1$ (see Remark \ref{unitarytheta}).

\subsection{Non-unitary spectra of Kac-Wakimoto}\label{nonunitaryKW}

In this subsection, we compare our spectra $\mathscr{S}_{p,p'}$
with the set of highest weights considered in \cite[\S 3]{kac2016representations}.
In what follows, we identify the set of triples $\mathscr{S}_{p,p'}$ with that of
the highest weights of the corresponding $\mf{ns}_{2}$-modules.

Let $\{\Lambda_{0},\Lambda_{1},\Lambda_{2}\}$ 
be the set of fundamental weights of the affine Lie 
superalgebra $\widehat{\mf{sl}}_{2|1}$.
We write $\Lambda(s,i)$
for the integral weight $(p'-1-s)\Lambda_{0}+s\Lambda_{i}$,
 where $0\leq s\leq p'-1$ and $i\in\{1,2\}$.
In \cite[\S 3]{kac2016representations}, 
V.~Kac and M.~Wakimoto considered 
a certain family of simple highest weight $\mf{ns}_{2}$-modules
of central charge 
$$c=3(1-2a)=3\Bigl(1-\frac{2p'}{p}\Bigr)$$
associated with the pair
$\bigl(p, \Lambda(s,i)\bigr)$ (see \cite[Lemma 2.1]{kac2016representations}).
The set of the corresponding $\mf{ns}_{2}$-highest weights
$(h,j)=\bigl(h,j,3(1-2a)\bigr)$ is give as follows:
\begin{equation*}
S\left(a;s,i\right)
:=\left\{(h,j)=\Lambda_{k_{1},k_{2}}\mid
k_{1},k_{2}\in\mathbb{Z}_{\geq0}\text{ and }k_{1}+k_{2}\leq p-1\right\},
\end{equation*}
where $$\Lambda_{k_{1},k_{2}}:=
\left(a\Bigl(\bigl(k_{1}+\frac{1}{2}\bigr)
\bigl(k_{2}+\frac{1}{2}\bigr)-\frac{1}{4}\Bigr)-s\bigl(k_{1}+\frac{1}{2}\bigr),
(-1)^{i}\bigl(a(k_{2}-k_{1})-s\bigr)\right).$$
We note that they assume the integer $p$ to be odd.
See \cite[\S 3]{kac2016representations} for the details.

\begin{ex} 
Here we present two examples.
\begin{enumerate}
\item If $(p,p')=(3,2)$, we have
$$S\left(\frac{2}{3};0,i\right)=
\left\{(0,0), \left(\frac{1}{3},\pm\frac{2}{3}\right)\right\}=\mathscr{S}_{3,2}$$
for $i\in\{1,2\}$.
\item
If $(p,p')=(5,2)$, we have
$$S\left(\frac{2}{5};0,i\right)\cap\mathscr{S}_{5,2}
=\left\{(0,0), \left(\frac{1}{5},\pm\frac{2}{5}\right), \left(\frac{2}{5},\pm\frac{4}{5}\right),
\left(\frac{4}{5},0\right),
\left(\frac{7}{5},\pm\frac{2}{5}\right)\right\},$$
$$S\left(\frac{2}{5};1,i\right)\cap\mathscr{S}_{5,2}
=\left\{\left(\frac{1}{10},\frac{(-1)^{i}}{5}\right)\right\},$$
and
$$\mathscr{S}_{5,2}\subsetneq
S\left(\frac{2}{5};0,i\right)\sqcup
S\left(\frac{2}{5};1,1\right)\sqcup
S\left(\frac{2}{5};1,2\right)$$
for $i\in\{1,2\}$.
\end{enumerate}
\end{ex}

\bibliographystyle{alpha}

\bibliography{ref}

\end{document}